\theoremstyle{plain}
\newtheorem{theorem}{Theorem}
\newtheorem{lemma}[theorem]{Lemma}
\newtheorem{corollary}[theorem]{Corollary}
\newtheorem{prop}[theorem]{Proposition}
\newtheorem*{assumption}{Main Assumptions}
\newtheorem{definition}[theorem]{Definition}
\newtheorem{claim}[theorem]{Claim}
\newtheorem{remark}[theorem]{Remark}
\theoremstyle{remark}
\newcommand{\cv}{\nabla}
\newcommand{\form}{\mr{form}}
\newcommand{\init}{\mr{init}}
\newcommand{\heat}{\big(\partial_t-\Delta\big)}
\newcommand{\lp}{\langle}
\newcommand{\mb}{\mathbb}
\newcommand{\mc}{\mathcal}
\newcommand{\mr}{\mathrm}
\newcommand{\pd}{\partial}
\newcommand{\rp}{\rangle}
\newcommand{\cyl}{\mr{cyl}}
\newcommand{\sing}{\rm{sing}}
\newcommand{\ve}{\varepsilon}
\newcommand{\vp}{\varphi}
\newcommand{\KN}{\mathbin{\bigcirc\mspace{-15mu}\wedge\mspace{3mu}}}
\newcommand{\parexp}[1]{|#1|_{2, \exp}}
\newcommand{\polyd}[1]{\|#1\|_{2, \mr{mon}}}
\newcommand{\innerp}[2]{\left\lp #1,  #2\right\rp}
\DeclareMathOperator{\Rm}{Rm}
\DeclareMathOperator{\Rc}{Rc}
\begin{document}

\title{Singularity formation of complete Ricci flow solutions}

\author{Timothy Carson}
\address[Timothy Carson]{Google}
\email{timothycarson@google.com}

\author{James Isenberg}
\address[James Isenberg]{University of Oregon}
\email{isenberg@uoregon.edu}
\urladdr{http://www.uoregon.edu/$\sim$isenberg/}

\author{Dan Knopf}
\address[Dan Knopf]{University of Texas at Austin}
\email{danknopf@math.utexas.edu}
\urladdr{http://www.ma.utexas.edu/users/danknopf}

\author{Nata\v sa \v Se\v sum}
\address[Nata\v sa \v Se\v sum]{Rutgers University}
\email{natasas@math.rutgers.edu}
\urladdr{http://www.math.rutgers.edu/$\sim$natasas/}

\thanks{JI thanks the NSF for support in PHY-1707427.
DK thanks the Simons Foundation for support in Award 635293.
N\v S thanks the NSF for support in DMS-0905749 and DMS-1056387.}

\begin{abstract}
We study singularity formation of complete Ricci flow solutions, motivated by two applications: \textsc{(a)}
improving the understanding of the behavior of the \emph{essential blowup sequences} of
Enders--M\"uller--Topping~\cite{EMT} on noncompact manifolds, and \textsc{(b)} obtaining further
evidence in favor of the conjectured stability of generalized cylinders as Ricci flow singularity models.
\end{abstract}

\maketitle
\setcounter{tocdepth}{1}
\tableofcontents

\section{Introduction}		\label{Intro}

\subsection{Motivations}
Much is known about Ricci flow in dimensions $n=2,3$ and on compact manifolds. Much less is known about
solutions on higher-dimensional or noncompact manifolds. In this paper, using multiply-warped products,
we investigate various phenomena that occur in singularity formation on complete noncompact
solutions $\big(\mc M, g(t)\big)$ of Ricci flow, in arbitrary dimensions. We are most interested in singularities for which noncompactness
plays an essential role in the precise sense that the metric on any compact subset $K\subset\mc M$ remains nonsingular.
Our results for solutions of this type are found in Theorem~\ref{asymptotics}, Theorem~\ref{main}, and Corollary~\ref{shrink} below.

Our main applications of those results are found in Theorem~\ref{thm-spatial-infinity} and Corollary~\ref{necessary-and-sufficient}.
Briefly, we show that standard sequences of parabolic dilations at a singularity, which produce predictable subsequential limits
on compact solutions, as shown by Enders--M\"uller--Topping~\cite{EMT}, can yield unexpected limits for noncompact
solutions unless additional criteria are imposed. We make this statement precise below. In a second application,
Theorem~\ref{cor-stability0}, we prove a weak stability result for generalized cylinders evolving by Ricci flow, which is
motivated by well-known and much stronger results of Colding--Minicozzi~\cite{CM12, CM15} for mean curvature flow.

\subsection{Manifolds}
Let $(\mc B^n,g_{\mc B})$ be a complete noncompact Riemannian manifold.
For $\alpha\in\{1,\dots,A<\infty\}$,  let $(\mc F_\alpha^{n_\alpha},g_{\mc F_\alpha})$ be a collection of space forms,
and let $\mu_\alpha$ be constants such that $\mu_\alpha g_{\mc F_\alpha}=2\Rc[g_{\mc F_\alpha}]$.
Given functions $u_\alpha:\mc B^n\rightarrow\mb R_+$, there
is a warped product metric $g$ on the
manifold $\mc M^{\mc N}=\mc B^n\times\mc F_1^{n_1}\times\cdots\times\mc F_A^{n_A}$,
where $\mc N=n+{\sum_{\alpha=1}^A} n_\alpha$, given by
\begin{equation}	\label{eq:warped-product}
g=g_{\mc B} + \sum_{\alpha=1}^A u_\alpha g_{\mc F_\alpha}.
\end{equation}
For brevity, we omit the dimensions of the manifold $\mc M^{\mc N}$ and its factors $\mc F_\alpha^{n_\alpha}$ in what follows.

Under Ricci flow, the structure~\eqref{eq:warped-product} of the multiply-warped product metric is preserved,
and the base metric $g_{\mc B}$
and warping functions $u_\alpha$ evolve by the coupled diffusion-reaction system
\begin{subequations}	\label{eq:Ricci-flow-system}
\begin{align}
 \partial_t\,g_{\mc B} + 2 \Rc[g_{\mc B}] &= -2\sum_{\alpha=1}^A n_\alpha u_\alpha^{-1/2}\cv^2(u_\alpha^{1/2}),
 \label{eq:evolve-base}\\
 (\partial_t-\Delta)\,u_\alpha &=-\mu_\alpha-u_\alpha^{-1}|\cv u_\alpha|^2,\qquad\qquad (\alpha\in\{1,\dots,A\}).
 \label{eq:evolve-fiber}
\end{align}
\end{subequations}

\begin{remark}
Throughout this paper, undecorated geometric quantities are computed with respect to the metric $g$ on $\mc M$
and its Levi--Civita connection. In particular, the Laplacian in~\eqref{eq:Ricci-flow-system} denotes that of the metric $g$,
\emph{i.e.,} $\Delta\equiv\Delta_{\mc M}$, rather than the Laplacian $\Delta_{\mc B}$ of the metric $g_{\mc B}$ on the base.
Given any smooth function $\vp(x)$ depending only on $x\in\mc B$, the two differential operators are related by
\begin{equation}	\label{eq:compare-Laplacians}
\Delta_{\mc M}\vp = \Delta_{\mc B}\vp+\frac12 \sum_{\alpha=1}^A n_\alpha u_\alpha^{-1}\lp\cv u_\alpha,\cv\vp\rp,
\end{equation}
as follows easily from Claim \ref{claim:hessfmultiple} of Appendix~\ref{multiply-warped}.
\end{remark}

If some $u_\alpha(x,0)$ is a constant $a_\alpha$, then $u_\alpha(x,t)=a_\alpha-\mu_\alpha t$ is an
explicit solution of~\eqref{eq:evolve-fiber} for as long as the flow remains smooth. Since we are interested in studying 
perturbations of spatially homogeneous solutions, we set $a_{\alpha}=\inf_{x\in\mc B}u_\alpha(x,0)$ and define
$v_\alpha(\cdot,0):\mc B\rightarrow\mb R_+$ by
\begin{equation}	\label{eq:fix-v}
v_\alpha(x,0)=u_\alpha(x,0)-a_{\alpha},
\end{equation}
for $\alpha\in\{1,\dots,A\}$.  We observe that for as long as a smooth solution of
system~\eqref{eq:Ricci-flow-system} exists, the metric has the form
\begin{equation}	\label{eq:metric-preserved}
g(x,t)=g_{\mc B}(x,t) + \sum_{\alpha=1}^A \big\{(a_\alpha-\mu_\alpha t)+v_\alpha(x,t)\big\}g_{\mc F_\alpha}.
\end{equation}

\begin{remark}	\label{inf}
The construction outlined above ensures that $\inf_{x\in\mc B}v_\alpha(x,0)=0$.
Because our solutions are not compact, it is not automatic that $\inf_{x\in\mc B}v_\alpha(x,t)=0$ for $t>0$ for which a solution exists.
However, this follows from results we prove below.
\end{remark}

In Appendix~\ref{multiply-warped}, we compute the curvatures of $(\mc M,g)$.
Here, for $\alpha\in\{1,\dots,A\}$ and all $t\geq0$ that a Ricci flow solution exists, we define the functions
\begin{subequations}		\label{eq-def-quantity}
\begin{align}
\gamma_\alpha(x,t)&=|\cv v_\alpha(x,t)|^2,\\
\chi_\alpha(x,t)&=|\cv^2 v_\alpha(x,t)|^2_{g_{\mc B}},		\label{eq:define-chi} \\
\rho(x,t)&=\big|\Rm[g_{\mc B}](x,t)\big|^2_{g_{\mc B}},
\end{align}
\end{subequations}
where the first set of norms is computed with respect to the metric $g(\cdot,t)$ on the total space, but
the second and third sets are computed with respect to $g_{\mc B}$.
To motivate these quantities, we note that it follows from Remark~\ref{rem-curv} in Appendix~\ref{multiply-warped}
that there is a universal constant $C$ depending only on the dimensions such that
\begin{equation}	\label{eq:curv-near-cyl}
\left|\Rm[g]-\sum_{\alpha=1}^A u_\alpha^{-1}\Rm[g_{\mc F_\alpha}]\right|_g
	\leq C\left\{\rho^{1/2}+\sum_{\alpha=1}^A\left(u_\alpha^{-2}\gamma_\alpha+u_\alpha^{-1}\chi_\alpha^{1/2}\right)\right\}.
\end{equation}
So at points where the quantities $v_\alpha$ are small relative to $u_\alpha$, control of $\rho$,
$\gamma_\alpha/u_\alpha^2$, and $\chi_\alpha/u_\alpha^2$ indicates that the curvature is pointwise close
to that of an un-warped product.

\subsection{Main results}

In this paper, we assume that $\gamma_\alpha$, $\chi_\alpha$, and $\rho$
are bounded on our initial data in terms of  a constant $C_{\init}$ and functions
$G_\alpha$ and $H_\alpha$ in a manner that we call our Main Assumptions and  make
precise in  Section~\ref{sec:assumptions}. (Specifically, we use $G_\alpha$ to
bound $\gamma_\alpha$ and $H_\alpha$ to bound $\chi_\alpha$.)
\smallskip

Our first result provides an asymptotic description
of all solutions of Ricci flow originating from initial data that satisfy those assumptions.
Specifically, it shows in a precise sense that the asymptotics of the original data are preserved:

\begin{theorem}	\label{asymptotics}
Let $\big(\mc M,g(t)\big)$ be a solution of the Ricci flow system~\eqref{eq:Ricci-flow-system} that
originates from initial data satisfying our Main Assumptions and exists for $t\in[0,T_{\mr{small}}]$.

There exists a constant $C_*=C_*(n,n_\alpha,C_\init)$ such that for $t\in[0,\min\{T_{\mr{small}},C_*^{-1}\})$,
the metric can be written as
\begin{align*}
g(x,t)&= \big(1+\mc O(1)\big) g_{\mc B}(x,0)\\
	&\qquad+\sum_{\alpha=1}^A
	\left\{ (a_\alpha-\mu_\alpha t) +
	\left(1+\mc O\left(\frac{G_\alpha\left(v_\alpha(x,0)\right)}{v_\alpha^2(x,0)}\right)\right)
	v_\alpha(x,0)\right\} g_{\mc F_\alpha}.
\end{align*}
\end{theorem}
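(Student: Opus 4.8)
The plan is to treat the evolution equations \eqref{eq:evolve-base}--\eqref{eq:evolve-fiber} as small perturbations of the exactly-solvable spatially-homogeneous case over a short time interval, and to propagate the initial asymptotic bounds encoded in $G_\alpha$, $H_\alpha$, and $C_\init$ using a continuity (bootstrap) argument. First I would set up the bootstrap: introduce the quantities $\gamma_\alpha,\chi_\alpha,\rho$ from \eqref{eq-def-quantity} together with the ratios $\gamma_\alpha/u_\alpha^2$ and $\chi_\alpha/u_\alpha^2$ appearing in \eqref{eq:curv-near-cyl}, and posit that on a maximal subinterval $[0,\tau)\subset[0,\min\{T_{\mr{small}},C_*^{-1}\})$ these remain within a fixed multiple of their initial bounds. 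On such an interval, \eqref{eq:evolve-fiber} shows that $u_\alpha$ stays comparable to $a_\alpha-\mu_\alpha t$ — in particular bounded away from $0$ — so all the inverse powers of $u_\alpha$ are controlled, and \eqref{eq:evolve-base} shows that $g_{\mc B}(\cdot,t)$ differs from $g_{\mc B}(\cdot,0)$ by a factor $1+\mc O(1)$ once $t\le C_*^{-1}$ for $C_*$ large. This gives the claimed form of the $g_{\mc B}$-coefficient and, via \eqref{eq:compare-Laplacians}, lets me replace $\Delta_{\mc M}$ by $\Delta_{\mc B}$ up to harmless lower-order terms throughout.

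Next I would estimate $v_\alpha(x,t)-v_\alpha(x,0)$. Subtracting the homogeneous solution $a_\alpha-\mu_\alpha t$ from \eqref{eq:evolve-fiber}, $v_\alpha$ satisfies an equation of the form $(\partial_t-\Delta)v_\alpha = -u_\alpha^{-1}|\cv u_\alpha|^2$, i.e. the nonlinearity is exactly $-u_\alpha^{-1}\gamma_\alpha$ (since $\cv u_\alpha=\cv v_\alpha$). Using the bootstrap control on $\gamma_\alpha$ in terms of $G_\alpha(v_\alpha(x,0))$ and on $u_\alpha\gtrsim a_\alpha-\mu_\alpha t$, Duhamel's formula against the heat semigroup of $g(\cdot,t)$ bounds $|v_\alpha(x,t)-v_\alpha(x,0)|$ by $Ct\cdot\sup G_\alpha(v_\alpha)/\inf u_\alpha$, which after absorbing constants into $C_*$ yields the factor $1+\mc O(G_\alpha(v_\alpha(x,0))/v_\alpha^2(x,0))$ multiplying $v_\alpha(x,0)$. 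The subtlety here is that the $\mc O$-term is a pointwise (not uniform) bound, so I need the Main Assumptions to control $G_\alpha(v_\alpha)/v_\alpha^2$ in a way compatible with the heat-kernel convolution — this is presumably where the precise structure of $G_\alpha$ (monotonicity, or a doubling/scaling property of the kind suggested by the $\polyd{\cdot}$ and $\parexp{\cdot}$ norms) enters, and where I would lean on whichever maximum-principle or weighted-norm lemma the Main Assumptions are designed to feed.

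Finally, I would close the bootstrap: the estimates just derived show that $\gamma_\alpha,\chi_\alpha,\rho$ and their rescalings at time $\tau$ are strictly smaller than the constants assumed, so $\tau$ cannot be the maximal time unless $\tau=\min\{T_{\mr{small}},C_*^{-1}\}$; this requires also propagating bounds on $\chi_\alpha=|\cv^2 v_\alpha|^2$ and $\rho=|\Rm[g_{\mc B}]|^2$, which satisfy their own reaction-diffusion inequalities obtained by commuting derivatives through \eqref{eq:Ricci-flow-system} (these are the Bernstein-type and curvature-evolution estimates I expect to be stated as separate lemmas before this theorem). The main obstacle, I anticipate, is precisely this last point — getting the derivative quantities $\chi_\alpha$ and $\rho$ to remain controlled, since their evolution equations involve the full nonlinear coupling and the noncompactness of $\mc B$ forbids a naive maximum principle; handling that likely needs either a weighted maximum principle tailored to the asymptotic profiles $G_\alpha,H_\alpha$, or the local derivative estimates that the Main Assumptions are set up to make uniform in space. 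Once those are in hand, the stated asymptotic form of $g(x,t)$ follows by collecting the three coefficient estimates.
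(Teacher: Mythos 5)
Your high-level strategy --- a continuity argument that propagates the $G_\alpha$- and $H_\alpha$-weighted bounds on $\gamma_\alpha$, $\chi_\alpha$, $\rho$ forward in time and then reads off the metric coefficients --- is indeed the skeleton of the paper's proof (Theorem~\ref{asymptotics} is extracted from the proof of Theorem~\ref{main}, specifically from the argument for Proposition~\ref{uniform}). But two of your steps have genuine gaps. First, the Duhamel step does not deliver the stated conclusion. Convolving the source term $-u_\alpha^{-1}\gamma_\alpha$ against the heat kernel yields a bound on $|v_\alpha(x,t)-v_\alpha(x,0)|$ in terms of spatial averages (or, as you write it, a global supremum) of $G_\alpha(v_\alpha(\cdot,0))$, whereas the theorem requires the pointwise bound $|v_\alpha(x,t)-v_\alpha(x,0)|\lesssim G_\alpha(v_\alpha(x,0))/v_\alpha(x,0)$ at each $x$; since $v_\alpha(\cdot,0)$, and hence $G_\alpha(v_\alpha(\cdot,0))$, may decay at spatial infinity, the averaged bound is strictly weaker and cannot be ``absorbed into $C_*$.'' The paper avoids the heat kernel entirely: once the pointwise bounds $\chi_\alpha\leq C\,H_\alpha(v_\alpha)$ and $\gamma_\alpha\leq C\,G_\alpha(v_\alpha)$ are in hand, inequality~\eqref{eq:v-evolution} gives the pointwise ODE estimate $|\partial_t\log v_\alpha|\leq C_*\big(\sqrt{H_\alpha(v_\alpha)/v_\alpha^2}+G_\alpha(v_\alpha)/v_\alpha^2\big)$, which integrates in time to the multiplicative statement. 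So the Hessian bound is an input to, not an output of, the $v_\alpha$ estimate, and the order of your argument must be reversed.

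Second, the step you defer --- closing the bootstrap for $\chi_\alpha$ and $\rho$ on a noncompact manifold --- is the real content of the proof, and the mechanism is not a generic weighted maximum principle but a specific two-tier construction. The paper applies a noncompact maximum principle (Lemma~\ref{lem:Tim-17}) to ratios $U/V$ with comparison functions $V=G_\alpha(v_\alpha(x,t_0))$ or $H_\alpha(v_\alpha(x,t_0))$, exploiting the negative gradient terms $-c\,|\cv U|^2/U$ in the evolution inequalities and verifying at most exponential growth of $U/V$; but checking the hypotheses on $V$ requires bounds on $|\cv\Rm|$ and on $\parexp{v_\alpha(\cdot,t_0)}$ that are initially available only with constants depending on the full curvature bound. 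Hence the argument runs in two passes: curvature-dependent linear-growth estimates on a short interval (Propositions~\ref{lem:Tim-15} and~\ref{cor-improvement}), then curvature-independent ones (Propositions~\ref{lem:Tim-16} and~\ref{uniform}), glued by the open-closed argument. Your single-pass bootstrap as written cannot close, because the constants in the differential inequalities for $\chi_\alpha$ and $\rho$, and the admissibility of the comparison functions, depend on the very curvature control you are trying to establish.
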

We note that the Main Assumptions imply that the terms $G_\alpha\big(v_\alpha(x,0)\big)/v_\alpha^2(x,0)$
are bounded. By those assumptions, those terms bound $|\cv\log v_\alpha(x,0)|^2$, which in turn implies
that the functions $v_\alpha(\cdot,0)$ can decay at most exponentially (see Remark~\ref{positive} below).
In fact, if the functions $G_\alpha$ are chosen so that the quantities $G_\alpha\big(v_\alpha(x,0)\big)/v_\alpha^2(x,0)$
are comparable to $|\cv\log v_\alpha(x,0)|^2$, then $G_\alpha(x,0)/v_\alpha^2(x,0)\searrow0$ as $v_\alpha(x,0)\searrow0$ if
and only if $v_\alpha(\cdot,0)$ decays more slowly than exponentially.
\smallskip

We prove Theorem~\ref{asymptotics} in the course of proving the following stronger but more technical result:

\begin{theorem}	\label{main}
Let $(\mc M,g_\init)$ satisfy the Main Assumptions stated in Section~\ref{sec:assumptions}.
Then there exists a constant $C_*=C_*(n,n_\alpha,C_\init)$ such that the following are true:

A solution
\[
g(x,t)=g_{\mc B}(x,t) + \textstyle\sum_{\alpha=1}^A \big\{a_\alpha-\mu_\alpha t+v_\alpha(x,t)\big\}g_{\mc F_\alpha}
\]
of the Ricci flow initial value problem with $g(x,0)=g_\init(x)$ exists with curvatures bounded in space
at all times $t\in[0, T_*)$, where $T_* := \min\{T_{\sing},C_*^{-1}\}$, and $T_{\sing}$ is the (finite) singularity time,
\emph{i.e.,} the maximal existence time of a smooth solution.

The $v_\alpha$ are uniformly equivalent for $t\in[0, T_*)$.
Specifically, one has
\[
\frac{1}{C_*}v_\alpha(x,t)\leq v_\alpha(x,0)\leq C_* v_\alpha(x,t).
\]

Moreover, for each $x\in\mc B$ and $t\in[0, T_*)$, one has
\begin{subequations}	\label{eq:main-estimates}
\begin{equation}
\rho(x,t)\leq C_\init\,(1+C_*t),
\end{equation}
and for $\alpha\in\{1,\dots,A\}$,
\begin{align}
\gamma_\alpha(x,t)&\leq C_{\init}\, \left(1 + C_* t \,\frac{G_\alpha\big(v_\alpha(x,t)\big)}{v_\alpha^2(x,t)}\right)\,
	G_{\alpha}(v_{\alpha}(x,t)),\\
\chi_\alpha(x,t)&\leq C_{\init}\,(1+C_*t)\,H_\alpha\big(v_\alpha(x,t)\big),
\end{align}
\end{subequations}
where $G_\alpha$ and $H_\alpha$ are functions specified in the Main Assumptions.
\end{theorem}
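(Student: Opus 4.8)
The plan is to run a continuity/bootstrap argument in time, treating the inequalities \eqref{eq:main-estimates} together with the two-sided equivalence of the $v_\alpha$ as the bootstrap hypotheses, with slightly worse constants playing the role of the open condition. First I would establish local existence and the preservation of the multiply-warped structure (so that the system \eqref{eq:Ricci-flow-system} holds), and record the short-time bound on curvature, so that the set of times $t\in[0,T_{\sing})$ on which a suitably enlarged version of \eqref{eq:main-estimates} holds is nonempty, open, and, by continuity of the quantities $\rho$, $\gamma_\alpha$, $\chi_\alpha$, $v_\alpha$, closed. The goal is then to show that on this set the sharp constants $C_\init$ (with the $C_* t$ correction) can in fact be recovered, provided $t<C_*^{-1}$; this is what forces $T_*=\min\{T_{\sing},C_*^{-1}\}$.

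The analytic heart is a system of differential inequalities. For each quantity I would compute its evolution under the heat operator $\heat$ (using the curvature formulas of Appendix~\ref{multiply-warped} and the system \eqref{eq:Ricci-flow-system}), keeping careful track of reaction terms. For $\rho=|\Rm[g_{\mc B}]|^2_{g_{\mc B}}$ one expects a Bernstein-type inequality $\heat\rho\le -2|\cv\Rm[g_{\mc B}]|^2+C\rho^{3/2}+(\text{cross terms in }\chi_\alpha,\gamma_\alpha,u_\alpha^{-1})$; for $\gamma_\alpha=|\cv v_\alpha|^2$ one gets, from differentiating \eqref{eq:evolve-fiber}, $\heat\gamma_\alpha\le -2\chi_\alpha+(\text{lower order})$; and for $\chi_\alpha=|\cv^2 v_\alpha|^2_{g_{\mc B}}$ a similar inequality with a good $-2|\cv^3 v_\alpha|^2$ term and reaction terms controlled by $\rho$, $\gamma_\alpha$, $\chi_\alpha$. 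The delicate point is that the bounding functions $G_\alpha$, $H_\alpha$ depend on $v_\alpha(x,t)$, which is itself evolving; so I would feed the assumed equivalence $v_\alpha(x,t)\asymp v_\alpha(x,0)$ into the structural hypotheses on $G_\alpha,H_\alpha$ from Section~\ref{sec:assumptions} (monotonicity/concavity-type conditions, and the bound $G_\alpha(v_\alpha)/v_\alpha^2\le C_\init$) to convert the differential inequalities into ones whose right-hand sides are controlled by $C_\init$ times the claimed bounds. Applying the maximum principle (in the noncompact setting, justified by the a priori spatial boundedness of curvature, which lets one use a Omori--Yau or barrier argument) then yields \eqref{eq:main-estimates} with the linear-in-$t$ growth.

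Two auxiliary steps support this. First, the two-sided equivalence of $v_\alpha$: integrating \eqref{eq:evolve-fiber} and using $|\cv v_\alpha|^2=\gamma_\alpha\le C_\init\,G_\alpha(v_\alpha)\le C_\init^2 v_\alpha^2$ gives $|\heat \log v_\alpha|\le C$ on the bootstrap interval, hence $e^{-Ct}\le v_\alpha(x,t)/v_\alpha(x,0)\le e^{Ct}$, which for $t<C_*^{-1}$ gives the stated constant $C_*$; this also yields Remark~\ref{inf} ($\inf_x v_\alpha(x,t)=0$ is preserved) and Theorem~\ref{asymptotics} by reading off $v_\alpha(x,t)=(1+\mc O(Ct))v_\alpha(x,0)$ with the error controlled by $\int_0^t |\cv\log v_\alpha|^2\,ds\lesssim t\,G_\alpha(v_\alpha(x,0))/v_\alpha^2(x,0)$. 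Second, one must verify that curvature stays bounded in space for all $t<T_*$, which follows from \eqref{eq:curv-near-cyl} together with the derived bounds on $\rho,\gamma_\alpha,\chi_\alpha$ and the fact that $u_\alpha^{-1}$ stays bounded until $T_{\sing}$.

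I expect the main obstacle to be bookkeeping the cross terms in the coupled differential inequalities so that each reaction term is genuinely absorbed — either by a good negative gradient term or by $C_\init$ times the right-hand side of the bound being proved — uniformly in the $u_\alpha^{-1}$ factors, since the fiber sizes $a_\alpha-\mu_\alpha t$ may be shrinking; handling that, and making the noncompact maximum-principle step rigorous via the a priori curvature bound, is where the real work lies.
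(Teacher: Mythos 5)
Your overall strategy is the paper's: an open--closed continuity argument in time, with the estimates \eqref{eq:main-estimates} (at a slightly worse constant) as bootstrap hypotheses, evolution inequalities for $\rho$, $\gamma_\alpha$, $\chi_\alpha$ carrying good negative gradient terms, a noncompact maximum principle with comparison functions built from $G_\alpha$ and $H_\alpha$, and the two-sided equivalence of the $v_\alpha$ obtained from $|\partial_t v_\alpha|\leq C v_\alpha$. The recovery of curvature-independent constants from the factor-of-two-enlarged hypotheses is exactly what the paper's Propositions~\ref{lem:Tim-16} and~\ref{uniform} do.

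The genuine gap is the \emph{openness} step, which you assert but do not justify. At $t_0=T_{\sup}>0$ you must extend the enlarged estimates a positive amount forward in time before the curvature-independent machinery can be re-applied, and at that moment the only thing available on $(T_{\sup}, t_1]$ is a qualitative curvature bound $C_1$ that may be enormous. The paper needs an entire intermediate layer (Propositions~\ref{lem:Tim-15} and~\ref{cor-improvement}) producing linear-growth bounds with constants depending on $C_1$ --- harmless because they are only used to verify the hypotheses \eqref{eq:open-closed} of Proposition~\ref{lem:Tim-16} on a short interval, after which the constants depending on $C_1$ drop out. Your proposal has no mechanism for this hand-off. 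Relatedly, your treatment of the lower bound on $v_\alpha$ by ``integrating \eqref{eq:evolve-fiber}'' does not work at this stage: $\partial_t v_\alpha$ contains $\Delta v_\alpha$, which you can only bound by $Cv_\alpha$ \emph{after} you know $\chi_\alpha\lesssim H_\alpha(v_\alpha(x,t))$ at the current time, and that hypothesis presupposes $v_\alpha(x,t)>0$ --- which is precisely what is at stake just past $T_{\sup}$ on a noncompact manifold where $\inf_x v_\alpha=0$ is attained only at infinity. The paper resolves this circularity by applying the maximum principle to the regularized reciprocals $(v_\alpha+\epsilon)^{-1}$ and running an iterated improvement to show $v_\alpha\geq-\delta(t-t_0)$ for every $\delta>0$ (Lemma~\ref{lemma-v-lower}); some such device is needed, and its absence is a real hole rather than a bookkeeping issue.
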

We prove Theorem~\ref{main} in Section~\ref{sec:main-proof} below after precisely stating our assumptions
in Section~\ref{sec:assumptions} and establishing preliminary estimates in
Sections~~\ref{basic}--\ref{main_estimates}.

If $C_*^{-1}<T_{\sing}$, then the theorem cannot describe the solution up to the singular time. However, we can
always arrange that it does apply up to $T_{\sing}$, as we now explain.
A key strength of the theorem is that the constant $C_*$ is independent of the quantities $a_\alpha$.
One sees from~\eqref{eq:curv-near-cyl} that the curvature can be very large if some $a_\alpha$ is very small. But even in that
case, the bounds~\eqref{eq:main-estimates} persist. This leads directly to our next result.  We 
let $\varsigma$ be such that $a_\varsigma/\mu_\varsigma = \min\{a_\alpha/\mu_\alpha\colon \mu_\alpha>0\}$.
By~\eqref{eq:metric-preserved}, the metric on $\mc F_\varsigma$ has the form
$\big\{(a_\varsigma-\mu_\varsigma t)+v_\varsigma(x,t)\big\}g_{\mc F_\varsigma}$. By Remark~\ref{inf},
$\inf v_\varsigma(\cdot,0)=0$, and by Theorem~\ref{main}, this infimum is preserved. Thus the solution
cannot exist past the formal singularity time $T_{\form} := a_\varsigma/\mu_\varsigma$.
Hence we have the following Corollary.

\begin{corollary}	\label{shrink}
There exist initial data $(\mc M,g'_\init)$ satisfying the Main Assumptions stated in Section~\ref{sec:assumptions}
with the same constant $C_{\init}$, the same initial values $v_\alpha$, the same real-valued functions $G_\alpha$
and $H_\alpha$, but with changed constants $a_\alpha$,  such that the conclusions
of Theorem~\ref{main} hold for the Ricci flow evolution of $(\mc M,g'_\init)$ at all times $[0,T_{\sing})$.
Moreover, $T_{\sing} = T_{\form}$; there are no finite singular points in space; and the singularity is
Type-I and occurs at spatial infinity.
\end{corollary}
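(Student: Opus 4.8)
The plan is to deduce the corollary directly from Theorem~\ref{main} by a scaling/modification argument on the constants $a_\alpha$, exploiting the crucial fact — emphasized in the paragraph preceding the statement — that $C_*$ does not depend on the $a_\alpha$. First I would fix the given data $(\mc M, g_\init)$ and retain $g_{\mc B}(\cdot,0)$, the initial warping perturbations $v_\alpha(\cdot,0)$, the functions $G_\alpha, H_\alpha$, and the constant $C_\init$. I would then choose new constants $a'_\alpha > 0$: for every $\alpha$ with $\mu_\alpha > 0$ I shrink $a_\alpha$ (e.g.\ replace it by $\delta a_\alpha$ for $\delta>0$ small) so that $T'_{\form} := a'_\varsigma/\mu_\varsigma = \min\{a'_\alpha/\mu_\alpha : \mu_\alpha > 0\}$ satisfies $T'_{\form} < C_*^{-1}$; constants with $\mu_\alpha \le 0$ (flat or negatively-curved fibers) can be left alone or chosen arbitrarily, since they impose no upper bound on existence. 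Because the Main Assumptions are conditions only on $\gamma_\alpha,\chi_\alpha,\rho$ at $t=0$ — hence on $v_\alpha(\cdot,0)$, $g_{\mc B}(\cdot,0)$, and the auxiliary functions — and these are unchanged, the new data $(\mc M, g'_\init)$ satisfy the Main Assumptions with the \emph{same} $C_\init, G_\alpha, H_\alpha$. This is the content of the first sentence of the corollary; the verification is essentially a bookkeeping check that nothing in Section~\ref{sec:assumptions} refers to the $a_\alpha$.

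Next I would apply Theorem~\ref{main} to $(\mc M, g'_\init)$. It produces a solution on $[0, T'_*)$ with $T'_* = \min\{T'_{\sing}, C_*^{-1}\}$ on which all the stated estimates hold, where $C_*$ is the \emph{same} constant as before (it depends only on $n, n_\alpha, C_\init$). By the argument already given in the excerpt: $\varsigma$ is chosen so the $\mc F_\varsigma$-factor has warping coefficient $(a'_\varsigma - \mu_\varsigma t) + v_\varsigma(x,t)$; Remark~\ref{inf} gives $\inf_x v_\varsigma(x,0) = 0$, and the uniform equivalence $v_\varsigma(x,0) \le C_* v_\varsigma(x,t)$ from Theorem~\ref{main} forces $\inf_x v_\varsigma(x,t) = 0$ for all $t \in [0,T'_*)$; hence the warping coefficient of $\mc F_\varsigma$ has infimum $a'_\varsigma - \mu_\varsigma t$, which hits $0$ at $t = T'_{\form}$. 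Therefore the smooth solution cannot be extended past $T'_{\form}$, i.e.\ $T'_{\sing} \le T'_{\form}$. Conversely, on $[0, \min\{T'_{\form}, C_*^{-1}\}) = [0, T'_{\form})$ the estimates~\eqref{eq:main-estimates} bound $\rho$, $\gamma_\alpha$, and $\chi_\alpha$ (the latter two after dividing by $v_\alpha^2$, using that the Main Assumptions make $G_\alpha(v_\alpha)/v_\alpha^2$ bounded), and then~\eqref{eq:curv-near-cyl} together with the explicit formula $u_\alpha = (a'_\alpha - \mu_\alpha t) + v_\alpha$ bounds $|\Rm[g]|$ on any region where $u_\alpha$ stays bounded below — which is exactly the complement of a neighborhood of spatial infinity. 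So the flow is smooth on all compact subsets of $\mc M$ for all $t < T'_{\form}$, giving $T'_{\sing} \ge T'_{\form}$ and hence $T'_{\sing} = T'_{\form}$, with $T'_* = T'_{\sing}$ so that Theorem~\ref{main} now genuinely describes the solution up to the singular time.

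Finally, the three geometric conclusions. \emph{No finite singular points:} fix a compact $K \subset \mc M$; its image under the projection to $\mc B$ is compact, so $\inf_K v_\alpha(\cdot,0) > 0$ for each $\alpha$ unless $v_\alpha \equiv 0$ there, and since $u_\alpha \ge a'_\alpha - \mu_\alpha t \ge a'_\varsigma - \mu_\varsigma t$ is bounded below away from $0$ on $[0,T'_{\form} - \eta]$ for any $\eta>0$, the right side of~\eqref{eq:curv-near-cyl} stays bounded on $K$ by~\eqref{eq:main-estimates}, so $\sup_K |\Rm[g(t)]|$ stays bounded as $t \to T'_{\form}$; hence the metric on $K$ remains nonsingular and the singularity occurs only at spatial infinity. \emph{Type-I:} along any sequence $x_k \to \infty$ in $\mc B$ realizing the infimum of $v_\varsigma$, one has $u_\varsigma(x_k, t) \to a'_\varsigma - \mu_\varsigma t = \mu_\varsigma(T'_{\form} - t)$, so the dominant term $u_\varsigma^{-1}|\Rm[g_{\mc F_\varsigma}]|$ in~\eqref{eq:curv-near-cyl} is comparable to $(T'_{\form}-t)^{-1}$; combined with the upper bound $|\Rm[g]| \le C(T'_{\form}-t)^{-1}$ obtained by feeding $u_\alpha \ge \mu_\varsigma(T'_{\form}-t)$ and the bounded quantities $\rho, \gamma_\alpha/u_\alpha^2, \chi_\alpha/u_\alpha^2$ into~\eqref{eq:curv-near-cyl}, this gives $\sup_{\mc M}|\Rm[g(t)]| \sim (T'_{\form}-t)^{-1}$, the Type-I rate. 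The main obstacle I anticipate is not any single step but making the curvature upper bound in~\eqref{eq:curv-near-cyl} genuinely uniform as one runs to spatial infinity: one must be careful that the "small $v_\alpha$ relative to $u_\alpha$" hypothesis implicit in interpreting~\eqref{eq:curv-near-cyl} is actually satisfied near infinity, which is where the uniform equivalence of the $v_\alpha$ and the boundedness of $G_\alpha(v_\alpha)/v_\alpha^2$ from Theorem~\ref{main} and the Main Assumptions do the real work; everything else is assembling already-proven facts.
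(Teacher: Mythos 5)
Your argument is correct and follows essentially the same route as the paper: shrink $a_\varsigma$ (legitimately, since $C_*$ is independent of the $a_\alpha$) so that $T_{\form}\le C_*^{-1}$, use the preserved condition $\inf_x v_\varsigma(x,t)=0$ to force $T_{\sing}\le T_{\form}$, and use $v_\varsigma>0$ together with the estimates of Theorem~\ref{main} fed into~\eqref{eq:curv-near-cyl} to get $T_{\sing}\ge T_{\form}$, the absence of finite singular points, and the Type-I rate at spatial infinity. Two cosmetic points: the half of the uniform equivalence you need to conclude $\inf_x v_\varsigma(x,t)=0$ is $v_\varsigma(x,t)\le C_* v_\varsigma(x,0)$ (you quoted the other direction), and the inequality $T_{\sing}\ge T_{\form}$ should rest on the \emph{global} spatial bound $|\Rm[g(t)]|\le C(T_{\form}-t)^{-1}$ for each fixed $t<T_{\form}$ --- which you do derive in your final paragraph --- rather than on smoothness over compact subsets alone.
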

A proof of this Corollary is found in Section~\ref{sec:main-proof}, following the proof of Theorem~\ref{main}.

\smallskip

A schematic outline of our proof of Theorems~\ref{main} is as follows.
The proof relies on two pairs of supporting results, with
Propositions \ref{lem:Tim-15} and \ref{cor-improvement} composing the first pair, and
Propositions~\ref{lem:Tim-16} and \ref{uniform} composing the second. In the process, we
obtain Theorem~\ref{asymptotics} as a consequence of the arguments we employ to prove Proposition~\ref{lem:Tim-16}.

Standard short-time existence results give us a smooth Ricci flow
solution on some time interval $[0,T_{\min}]$, with some curvature bound. Propositions~\ref{lem:Tim-15} and \ref{cor-improvement} take as their input a 
curvature bound on 
$[0,T_{\min}]$; they output\  linear growth estimates for $\rho,\gamma_\alpha,\chi_\alpha$ on an interval $[0,T_1]\subseteq[0,T_{\min}]$,
albeit with a possibly large constant that depends on the input curvature bound.
As noted below the statement of Theorem~\ref{main}, we ultimately do not want estimates that directly depend on the curvature.
The fact that we get linear growth estimates for $\rho,\gamma_\alpha,\chi_\alpha$, however, lets us then apply
Propositions~\ref{lem:Tim-16} and \ref{uniform},
which take as their
input uniform bounds on a suitable subinterval $[0,T_2]\subseteq[0,T_1]$ and yield the conclusions of the theorem
on some time interval $[0,T_3]\subseteq[0,T_2]$. Finally, we use an ``open-closed'' argument to show
that the supremum of $t>0$ such that the Theorem holds cannot be too small,  \emph{i.e.,} that it extends to
$\min\{T_{\sing},C_*^{-1}\}$.

\subsection{Applications}
\subsubsection{Essential blowup sequences on noncompact manifolds}
The main application of Theorem~\ref{main} that we have in mind in this paper
is to obtain new insights into blowup limits of singularities on complete noncompact manifolds. 
We rigorously explore the phenomena that occur if finite-time singularities form at spatial
infinity on noncompact manifolds. More precisely, we construct complete Ricci flow solutions for which Type-I singularities
occur at spatial infinity and which do not have any Type-I singular points.
The existence of such (singly-warped) examples has been conjectured in \cite{EMT}.
We show that for each of our (doubly-warped) examples, taking a blowup limit along some essential blow up sequence
(see Section~\ref{sec-application} for precise definitions) yields a gradient shrinking Ricci soliton in the
subsequential limit, whereas taking a subsequential limit along some other essential blow up sequence
yields a complete ancient solution that is not a soliton. We summarize these results in the following theorem: 

\begin{theorem}
\label{thm-spatial-infinity}
There exist complete, noncompact, $\kappa$-noncollapsed Ricci flow solutions $(\mc M, g(t))$, with
$\mc M := \mathbb{R}\times \mc S^p\times \mc S^p$, that develop Type-I singularities at spatial infinity.

On each of these solutions, there exist essential blowup sequences along which a blowup limit yields a nontrivial
gradient shrinking Ricci soliton, and there exist essential blowup sequences along which no blowup limit can be 
a gradient shrinking Ricci soliton.
\end{theorem}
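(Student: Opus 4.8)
The idea is to build these solutions as doubly-warped products over $\mathbb{R}$ and then to exploit the fact that near spatial infinity the solution is modelled on a homogeneous shrinking product whose parabolic rescalings depend sensitively on \emph{how fast} the base point is sent to infinity. \emph{Construction:} take $\mc B=\mathbb{R}$ and $\mc F_1=\mc F_2=\mc S^p$ ($p\ge 2$), so $\mu_1=\mu_2=\mu:=2(p-1)$, and choose \emph{equal} constants $a_1=a_2=:a$. Pick initial perturbations $v_1(\cdot,0),v_2(\cdot,0)>0$ depending only on the $\mathbb{R}$-coordinate $x$, even, and decaying at genuinely different polynomial rates, say $v_1(x,0)\sim|x|^{-1}$ and $v_2(x,0)\sim|x|^{-N}$ with $N\ge 2$; take $g_{\mc B}(\cdot,0)$ asymptotically flat (and note $\rho\equiv 0$ since $\mc B$ is one-dimensional). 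Polynomial decay lies in the admissible regime for the Main Assumptions, so one can choose $C_\init$ and functions $G_\alpha,H_\alpha$ with $G_\alpha(v)/v^2\searrow 0$ and $H_\alpha(v)\to 0$ as $v\searrow 0$ (cf.\ the discussion after Theorem~\ref{asymptotics}). Corollary~\ref{shrink} (its permitted adjustment of the $a_\alpha$ being unnecessary here) then produces a complete Ricci flow on $\mc M=\mathbb{R}\times\mc S^p\times\mc S^p$ on the maximal interval $[0,T_{\sing})$ with $T_{\sing}=T_{\form}=a/\mu=:T$, a Type-I singularity at spatial infinity, no finite singular points, the structure~\eqref{eq:metric-preserved}, the estimates~\eqref{eq:main-estimates}, and $v_\alpha(\cdot,t)\asymp v_\alpha(\cdot,0)$. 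For $\kappa$-noncollapsing I would argue directly from~\eqref{eq:metric-preserved} and Theorem~\ref{asymptotics}: near spatial infinity the solution is, on the curvature scale $\asymp(T-t)^{1/2}$, uniformly close to the homogeneous product $\mathbb{R}\times\mc S^p(\sqrt{\mu(T-t)})\times\mc S^p(\sqrt{\mu(T-t)})$, whose metric balls of radius $r\le(T-t)^{1/2}$ have volume $\asymp r^{\dim\mc M}$, while in the interior the geometry stays smooth and essentially fixed; so a uniform $\kappa>0$ works.

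\emph{Rescalings.} Given $x_i\to+\infty$ and $t_i\nearrow T$, set $Q_i:=(T-t_i)^{-1}$ and $g_i(s):=Q_i\,g\big(\cdot,\,t_i+sQ_i^{-1}\big)$ based at $(x_i,t_i)$. By~\eqref{eq:metric-preserved} the rescaled warping coefficients are $\hat u^{(i)}_\alpha(s)=\mu(1-s)+Q_i\,v_\alpha\big(\cdot,\,t_i+sQ_i^{-1}\big)$, and by~\eqref{eq:main-estimates} the rescaled squared spatial gradient of $\hat u^{(i)}_\alpha$ is $\ls Q_iG_\alpha(v_\alpha)=Q_i\cdot o(v_\alpha^2)$ while the rescaled squared Hessian is $\ls H_\alpha(v_\alpha)$. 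Provided $Q_iv_\alpha(x_i,t_i)\to c_\alpha\in[0,\infty)$, these error terms all vanish in the limit (using also $\rho\equiv 0$ and, via~\eqref{eq:evolve-fiber}, control of the $t$-dependence of $v_\alpha$ over the $O(Q_i^{-1})$-long rescaled interval), so by the Type-I curvature bound, Shi's estimates and $\kappa$-noncollapsing, Hamilton's compactness theorem produces a subsequential pointed smooth limit equal to
\[
\mathbb{R}_{\mathrm{flat}}\times\mc S^p\big(\sqrt{\mu(1-s)+c_1}\big)\times\mc S^p\big(\sqrt{\mu(1-s)+c_2}\big),
\]
which is a complete ancient solution (it exists for $s<1+\mu^{-1}\min(c_1,c_2)$, and $Q_it_i\to\infty$). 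By~\eqref{eq:curv-near-cyl} and~\eqref{eq:main-estimates} one has $|\Rm|(x_i,t_i)\asymp u_1(x_i,t_i)^{-1}$, so whenever $u_1(x_i,t_i)\asymp T-t_i$ this is an essential blow-up sequence in the sense of~\cite{EMT}.

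\emph{The two sequences.} For a soliton limit: choose $x_i$ with $v_1(x_i,0)\le i^{-2}$ and set $t_i:=T-i^{-1}$; then $v_\alpha(x_i,t_i)\ls v_\alpha(x_i,0)\le i^{-2}=o(T-t_i)$, so $c_1=c_2=0$, the sequence is essential, and the limit is $\mathbb{R}_{\mathrm{flat}}\times\mc S^p(\sqrt{\mu(1-s)})\times\mc S^p(\sqrt{\mu(1-s)})$ --- the Gaussian soliton on $\mathbb{R}$ times the canonical shrinker on the Einstein manifold $\mc S^p\times\mc S^p$, a nontrivial gradient shrinking Ricci soliton. For a non-soliton limit: choose $x_i\to\infty$ and let $t_i$ solve $T-t_i=v_1(x_i,t_i)$ (the intermediate value theorem applies, since at a fixed $x_i$ the flow stays smooth up to $T$ while $v_1(x_i,\cdot)$ remains comparable to $v_1(x_i,0)>0$). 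Then $u_1(x_i,t_i)=(\mu+1)(T-t_i)$, so the sequence is essential; $Q_i\asymp|x_i|$; $Q_iv_1(x_i,t_i)=1$ for all $i$, so $c_1=1$; and $c_2=\lim_iQ_iv_2(x_i,t_i)=\lim_i v_2(x_i,t_i)/v_1(x_i,t_i)\ls\lim_i v_2(x_i,0)/v_1(x_i,0)=\lim_i|x_i|^{1-N}=0$. Hence every subsequential limit along $(x_i,t_i)$ has the displayed form with $c_1\ne c_2$; for such a metric the ratio of squared sphere radii $\big(\mu(1-s)+c_1\big)/\big(\mu(1-s)+c_2\big)$ is strictly monotone in $s$, so the time slices are not pairwise homothetic, whereas a gradient shrinking Ricci soliton evolves only by homotheties and diffeomorphisms --- so no blow-up limit along this essential sequence can be a gradient shrinking Ricci soliton.

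\emph{Main obstacle.} The delicate point is making the Cheeger--Gromov convergence fully rigorous and pinning down the limit: the bounds~\eqref{eq:main-estimates} control only the low-order quantities $\rho,\gamma_\alpha,\chi_\alpha$, so they must be bootstrapped (via Type-I bounds and Shi's derivative estimates on the rescaled flows) to the $C^\infty_{\mathrm{loc}}$ control that identifies the limit, and one must control the $t$-variation of $v_\alpha$ across the $O(Q_i^{-1})$ rescaled time interval through~\eqref{eq:evolve-fiber} in order to conclude $Q_iv_\alpha\big(\cdot,\,t_i+sQ_i^{-1}\big)\to c_\alpha$ locally uniformly; establishing $\kappa$-noncollapsing for these complete, only-spatially-bounded-curvature solutions is the other ingredient requiring care.
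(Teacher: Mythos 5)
Your proposal is correct and follows the same overall strategy as the paper: a doubly-warped product over $\mb R$ with two $\mc S^p$ fibers whose perturbations decay at different rates, Corollary~\ref{shrink} to get the Type-I singularity at spatial infinity, and parabolic rescalings at points where $v_\alpha(x_j,t_j)/(T-t_j)\to c_\alpha$, so that the limit is $\mb R\times\mc S^p\big(\sqrt{\mu(1-s)+c_1}\big)\times\mc S^p\big(\sqrt{\mu(1-s)+c_2}\big)$ and the soliton/non-soliton dichotomy is exactly $c_1=c_2$ versus $c_1\neq c_2$. Two details differ, and both are legitimate. First, the paper insists that $\delta_1/\delta_2\to\eta\in\mb R_+\setminus\{0,1\}$ and produces $c_1=c_2\eta$ with both $c_\alpha\in(0,\infty)$, whereas you take $\delta_2/\delta_1\to0$ and force $c_1=1$, $c_2=0$ by solving $T-t_i=v_1(x_i,t_i)$; this still yields a smooth nontrivial limit, so the paper's comparability requirement is a convenience rather than a necessity, but you should verify (as the paper does for its example $\delta=1/(1+r^2)$) that your polynomially decaying $\delta_\alpha$ admit $G_\alpha\in\mc G$ — note $G(s)=s^4$ fails the condition $\sup_s G(s)/s^2<\infty$, so one must use something like $s^4/(1+s^2)$ — and that $\chi_\alpha\leq C_\init H_\alpha$ holds with the coupled definition~\eqref{eq-H}. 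Second, to rule out a soliton limit the paper invokes the ODE classification of doubly-warped gradient shrinkers from~\cite{AK19} (constant-in-space warping functions force $\vp_\alpha^2=(p-1)/(-\lambda)$, hence equal radii), while you argue that the limit flow's time slices are not pairwise homothetic because the ratio of squared radii $1+\big(\mu(1-s)\big)^{-1}$ is strictly monotone; your route is more elementary but implicitly uses forward uniqueness of complete bounded-curvature Ricci flow (Chen--Zhu) to conclude that a slice satisfying the shrinker equation would have to evolve self-similarly, plus uniqueness of the de Rham factorization to see that unequal radius ratios obstruct homothety. Finally, your parenthetical that the adjustment of $a_\alpha$ in Corollary~\ref{shrink} is ``unnecessary'' should be read as ``performed at the outset'': one must still choose $a\leq\mu/C_*$ so that Theorem~\ref{main} controls the flow up to $T_{\sing}=T_{\form}$, exactly as the paper does by taking $a_*$ sufficiently small.
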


The key idea is that on noncompact Ricci flow solutions, there can be essential blowup sequences with no
Type-I singular point limit, 
 and these sequences may or may not have nontrivial gradient Ricci soliton limits.
However, one can obtain soliton limits by imposing another condition. Indeed, we show the following in the proof
of Theorem~\ref{thm-spatial-infinity}:

\begin{corollary}	\label{necessary-and-sufficient}
Under the conditions of Theorem~\ref{thm-spatial-infinity}, a blowup limit of the flow along a sequence
$(x_j,t_j)$ with $|x_j|\rightarrow\infty$ and $t_j\rightarrow a_*$ is a nontrivial gradient soliton if and only if
\[
\lim_{j\to\infty}\frac{|\Rm(x_j,t_j)|}{\sup_{\mc M} |\Rm(\cdot,t_j)|} = 1.
\]
\end{corollary}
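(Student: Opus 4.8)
The plan is to use the precise asymptotics of Theorem~\ref{main} and Theorem~\ref{asymptotics} to identify exactly when a blowup limit along a sequence escaping to spatial infinity is a nontrivial gradient shrinking soliton. The starting point is the description~\eqref{eq:curv-near-cyl}: since along such a sequence $v_\alpha(x_j,0)\to 0$ (the $v_\alpha$ decay at spatial infinity) and, by Theorem~\ref{main}, $v_\alpha(x_j,t)$ stays comparable to $v_\alpha(x_j,0)$ while the error terms $\rho$, $\gamma_\alpha/u_\alpha^2$, $\chi_\alpha/u_\alpha^2$ remain controlled by the Main Assumptions, the curvature at $(x_j,t_j)$ is, to leading order, that of the shrinking product of space forms $\sum_\alpha u_\alpha^{-1}\Rm[g_{\mc F_\alpha}]$ evaluated with $u_\alpha = a_\alpha - \mu_\alpha t_j$. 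In the setup $\mc M = \mathbb{R}\times\mc S^p\times\mc S^p$, the two round factors shrink at rates governed by $\mu_1,\mu_2$ and the constants $a_1,a_2$; the dominant factor near the singular time $a_*=T_{\form}=a_\varsigma/\mu_\varsigma$ is the one, say $\mc F_\varsigma$, whose warping function $a_\varsigma-\mu_\varsigma t$ vanishes first. So the supremum $\sup_{\mc M}|\Rm(\cdot,t_j)|$ is, up to lower-order corrections and up to the contribution of any genuinely curved region of the base, comparable to $(a_\varsigma-\mu_\varsigma t_j)^{-1}$ times a dimensional constant.

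First I would make the ``if'' direction precise. Suppose $\lim_j |\Rm(x_j,t_j)|/\sup_{\mc M}|\Rm(\cdot,t_j)| = 1$. Parabolically rescale by $Q_j := |\Rm(x_j,t_j)|$ and recenter at $(x_j,t_j)$. Because $|x_j|\to\infty$ and the error quantities $\rho,\gamma_\alpha,\chi_\alpha$ are small there relative to the curvature scale (here one uses that $G_\alpha(v_\alpha)/v_\alpha^2$ and $H_\alpha(v_\alpha)/v_\alpha^2 \to 0$ as $v_\alpha\to 0$, together with the linear-in-$t$ bounds of~\eqref{eq:main-estimates} and the comparability $v_\alpha(x_j,t)\asymp v_\alpha(x_j,0)$), the rescaled base metric flattens out and the rescaled fiber metrics converge to standard round metrics of fixed radii. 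The $\kappa$-noncollapsing hypothesis, inherited by the rescalings, prevents collapse, so by Hamilton--Cheeger--Gromov compactness a subsequence converges to a complete ancient limit, which is forced to be the self-similarly shrinking round cylinder $\mathbb{R}\times\mc S^p$ (or a product thereof) — a nontrivial gradient shrinking soliton. The condition $\lim_j |\Rm(x_j,t_j)| = \sup_{\mc M}|\Rm(\cdot,t_j)|$ is what guarantees $t_j$ approaches the formal singular time at the right rate for $\mathcal F_\varsigma$ to dominate and for the limit to be self-similar rather than a more general ancient solution.

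For the ``only if'' direction, I would argue contrapositively: if $\liminf_j |\Rm(x_j,t_j)|/\sup_{\mc M}|\Rm(\cdot,t_j)| = c < 1$ along a subsequence, then the rescaled solutions by $Q_j = |\Rm(x_j,t_j)|$ converge (again using $\kappa$-noncollapsing and the bounded-geometry estimates of Theorem~\ref{main}) to a complete ancient limit in which the supremum of curvature at the final time slice is strictly larger than the curvature at the basepoint — by the strong maximum principle / the rigidity of gradient shrinking solitons with the relevant curvature normalization, such a limit cannot be a nontrivial gradient shrinking soliton (on a shrinking round cylinder the curvature is spatially constant, so the scaled basepoint curvature would have to equal the supremum). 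One must also dispatch the degenerate alternative: if the relevant warping function does not vanish fastest or $t_j$ does not converge to $a_*$ at the matching rate, the same comparison shows the limit is an ancient but non-self-similar solution, exactly as in Theorem~\ref{thm-spatial-infinity}.

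The main obstacle I anticipate is making rigorous the claim that $\sup_{\mc M}|\Rm(\cdot,t_j)|$ is governed precisely by the fastest-shrinking fiber and that contributions from the base and from the $v_\alpha$-perturbations are genuinely lower order near $t = T_{\form}$ — this requires carefully combining~\eqref{eq:curv-near-cyl} with the quantitative bounds~\eqref{eq:main-estimates} and the decay rates encoded in $G_\alpha,H_\alpha$, and checking that the ratio in the Corollary is insensitive to the lower-order terms. The compactness and limit-identification steps are, by contrast, fairly standard given $\kappa$-noncollapsing and the uniform local curvature bounds that Theorem~\ref{main} provides away from spatial infinity after rescaling.
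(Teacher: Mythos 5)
There is a genuine gap, in two places. First, the computational heart of the corollary --- the equivalence between $\lim_j|\Rm(x_j,t_j)|/\sup_{\mc M}|\Rm(\cdot,t_j)|=1$ and the condition $\lim_j\delta_\alpha(x_j)/(a_*-t_j)=0$ --- is exactly what you flag at the end as ``the main obstacle I anticipate'' and then do not carry out. In the paper this is done explicitly: since $\inf_{\mc B}v_\alpha(\cdot,t)=0$ is preserved, $\sup_{\mc M}|\Rm(\cdot,t_j)|=\mu_\alpha/\big(2(p-1)(a_*-t_j)\big)$ up to $O(1)$, while $|\Rm(x_j,t_j)|$ carries the extra factor $\big(1+(1+o(1))\,\delta_\alpha(x_j)/(a_*-t_j)\big)^{-1}$; the ratio tends to $1$ precisely when the $c_\alpha:=\lim_j\delta_\alpha(x_j)/(a_*-t_j)$ vanish. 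Without this step you have not connected the curvature-ratio hypothesis to anything about the blowup limit, so the corollary is not proved.

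Second, your ``only if'' mechanism is not the right one. You argue that if the ratio has limit $c<1$ then the blowup limit has basepoint curvature strictly below ``the supremum of curvature at the final time slice,'' and that a soliton (being spatially homogeneous) cannot tolerate this. But the supremum is taken over the original noncompact $\mc M$, and in a pointed Cheeger--Gromov limit based at $x_j$ that information is lost: the paper shows (Claims~\ref{claim-structure} and~\ref{claim-constant-soliton}) that the limit is $\mb R\times\mc S^p\times\mc S^p$ with warping functions \emph{constant in space}, so within the limit there is no curvature discrepancy to exploit. The actual obstruction is that the two spherical factors of the limit have radii in the ratio $(1+c_2\eta)/(1+c_2)\neq1$ when $c_2>0$ (this is where the hypothesis $\eta\neq1$ enters --- a point your ``fastest-shrinking fiber $\mc F_\varsigma$'' framing misses entirely, since in this construction both fibers have the same $a_*$ and $\mu$ and shrink identically), and the ODE classification of doubly-warped gradient shrinking solitons from~\cite{AK19} shows that a spatially constant solution of the soliton system forces equal radii. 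You neither identify the limit concretely enough to compute this ratio nor supply a substitute for the soliton classification, so the ``only if'' direction does not close. The ``if'' direction is closer to the paper's, but the limit is the product $\mb R\times\mc S^p\times\mc S^p$ with two \emph{equal} round factors, not a single cylinder, and one must still verify via the system~\eqref{eq-ODE} that this product admits a gradient soliton structure.
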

In other words, to obtain a nontrivial gradient shrinking soliton limit, it is both necessary and sufficient that
$|\Rm(x_j,t_j)|\to\sup_{\mc M}|\Rm(\cdot,t_j)|$ as $|x_j|\to\infty$. Clearly, the subsequences we construct
in Theorem~\ref{thm-spatial-infinity} that fail to have soliton limits do not satisfy this condition.
\smallskip

We obtain a related result for solutions on $\mc M = \mathbb{R}\times \mc S^1\times \mc S^p$. These are
not $\kappa$-noncollapsed, hence do not have blowup limits except as \'etale groupoids, in the
sense considered by Lott~\cite{Lott10}.
\smallskip

We believe that the arguments we use to prove Theorem~\ref{thm-spatial-infinity} could 
easily be extended to construct $\kappa$-noncollapsed examples on $\mb R^k\times\mc S^p\times\mc S^p$ for any $p\geq2$
with the same properties that (a) their singularities occur at spatial infinity and that (b) distinct subsequential blowup limits are
possible.

\subsubsection{Weak stability of generalized cylinders}

Stability of cylinders $\mb R^k\times \mc S^p$ under Ricci flow is a subtle question.
Even though a round cylinder $\mb R^k\times\mc S^p$ is expected to be a stable singularity model in a suitable sense,
it is not immediately clear how to define its stability.
In the case of mean curvature flow, it is shown in~\cite{CM12} that the only entropy-stable\footnote{See definitions~(0.5) and~(0.6) in~\cite{CM12}.}
shrinkers are spheres, hyperplanes, and generalized cylinders.
Currently, there is no analogue of such a result in Ricci flow. Accordingly, we adopt the following:

\begin{definition}
We say a solution $g(\cdot,t)$ of Ricci flow  is \emph{weakly stable} if for every $\epsilon > 0$, there exists $\delta > 0$
such that for any other Ricci flow solution $\tilde{g}(\cdot,t)$ satisfying $\|g(\cdot,0) - \tilde{g}(\cdot,0)\|_{C^0} < \delta$,
one has $\|g(\cdot,t) - \tilde{g}(\cdot,t)\|_{C^0} < \epsilon$ for all $t \ge 0$ that both solutions exist.                                                
\end{definition}

We prove the following result, which is stated more precisely as Theorem~\ref{cor-stability} in the text below.

\begin{corollary}	\label{cor-stability0}
Ricci flow of a direct product metric $g_{\cyl}$ on $\mb R^k\times \mc S^p$ is weakly stable with respect to
admissible perturbations of $g_{\cyl}.$\footnote{These are understood in the sense of Definition~\ref{def-adm-pert}, below.}

Moreover, if $g(\cdot,0)$ is an admissible perturbation of $g_{\cyl}(\cdot,0)$, then both flows
$g(\cdot,t)$ and $g_{\cyl}(\cdot,t)$ develop a singularity at the same finite time and that they stay close to each
other in the $C^0$ norm up to that singular time.
\end{corollary}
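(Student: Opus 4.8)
The plan is to realize the direct product metric $g_{\cyl}$ on $\mb R^k\times\mc S^p$ as the spatially homogeneous member of the family of warped products \eqref{eq:warped-product}, with base $(\mc B^k,g_{\mc B})=(\mb R^k,g_{\mathrm{euc}})$ (so $\Rm[g_{\mc B}]\equiv0$ and $\rho\equiv0$), a single fiber $\mc F_1=\mc S^p$, warping constant $a_1=r_0^2$ and $\mu_1>0$ the curvature constant of the round sphere. Then $g_{\cyl}(\cdot,t)$ is exactly the explicit solution $u_1(t)=a_1-\mu_1 t$, which becomes singular precisely at $T_{\form}=a_1/\mu_1$. An admissible perturbation (Definition \ref{def-adm-pert}) is by design an initial metric $g_\init$ of the form \eqref{eq:metric-preserved} with the same base, with $v_1(\cdot,0)\geq0$, $\inf v_1(\cdot,0)=0$, and with $\gamma_1,\chi_1,\rho$ controlled so that $g_\init$ satisfies the Main Assumptions of Section~\ref{sec:assumptions} with some fixed $C_\init$; the size of the $C^0$-perturbation is governed by $\|v_1(\cdot,0)\|_{C^0}=\sup_x v_1(x,0)$, which we will take to be the small parameter $\delta$.

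The core of the argument is then just to run Theorem~\ref{main} (together with Corollary~\ref{shrink}) on $g_\init$. First, Theorem~\ref{main} produces the uniform two-sided comparison $C_*^{-1}v_1(x,t)\leq v_1(x,0)\leq C_* v_1(x,t)$ on $[0,T_*)$, with $C_*=C_*(k,p,C_\init)$ \emph{independent of $a_1$}; this is the crucial $a_1$-independence emphasized after Theorem~\ref{main}. Second, since $\rho\equiv0$ here and $\inf_x v_1(x,0)=0$, Corollary~\ref{shrink} (more precisely the mechanism in the paragraph preceding it: $\inf_x v_1(\cdot,t)=0$ is preserved, so the solution cannot persist past $a_1/\mu_1$, while Theorem~\ref{main}'s estimates remain valid up to $T_{\sing}$) gives $T_{\sing}(g_\init)=T_{\form}=a_1/\mu_1=T_{\sing}(g_{\cyl})$, so both flows are smooth on the same interval $[0,a_1/\mu_1)$ and become singular at the same finite time. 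Third, for the $C^0$-closeness: on the fiber directions the metrics differ by $\big\{v_1(x,t)\big\}g_{\mc F_1}$, and $\sup_x v_1(x,t)\leq C_*\sup_x v_1(x,0)=C_*\delta$ by the comparison, uniformly in $t\in[0,a_1/\mu_1)$; on the base directions one uses the asymptotic description in Theorem~\ref{asymptotics}, $g_{\mc B}(x,t)=(1+\mc O(1))g_{\mc B}(x,0)$, refined by the same comparison estimates to show the base perturbation is also $\mc O(\delta)$ in $C^0$. Combining, $\|g(\cdot,t)-g_{\cyl}(\cdot,t)\|_{C^0}\leq C(k,p,C_\init)\,\delta$ on $[0,T_{\sing})$, so choosing $\delta=\epsilon/C$ finishes the weak-stability statement and simultaneously the ``stay close up to the singular time'' assertion.

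The main obstacle — and the reason the precise statement is deferred to Theorem~\ref{cor-stability} — is making the notion of \emph{admissible perturbation} do exactly the right work: one must show that a genuine small $C^0$-perturbation of $g_{\cyl}$ of warped-product type can be arranged to satisfy the Main Assumptions with a constant $C_\init$ that does \emph{not} blow up as $\delta\to0$, and to pin down which functions $G_1,H_1$ are being used so that the hypotheses of Theorem~\ref{main} genuinely apply. A secondary technical point is the base-direction estimate: Theorem~\ref{asymptotics} only asserts $g_{\mc B}(x,t)=(1+\mc O(1))g_{\mc B}(x,0)$, so to get a bound that is \emph{small} rather than merely bounded one has to revisit the proof of Theorem~\ref{asymptotics} / Proposition~\ref{lem:Tim-16} and track that the $\mc O(1)$ term is in fact $\mc O$ of the perturbation size (which it is, since it is driven by $v_1$, $\gamma_1$, $\chi_1$ through \eqref{eq:evolve-base}, all of which are $\mc O(\delta)$). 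Everything else is bookkeeping built on the $a_1$-independent constant from Theorem~\ref{main}.
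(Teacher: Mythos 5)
Your plan follows the paper's own route essentially verbatim: the paper proves Corollary~\ref{cor-stability0} in the form of Theorem~\ref{cor-stability} by applying Proposition~\ref{uniform} (hence Theorem~\ref{main}, with the constant independent of the perturbation) to get the two-sided comparison $v(x,t)\sim\delta(x)$ uniformly up to the singular time, and by quoting the mechanism of Corollary~\ref{shrink} and Theorem~\ref{thm-spatial-infinity} to identify $T_{\sing}=a_*$ for both flows. The two caveats you flag --- uniformity of $C_{\init}$ over the class of admissible perturbations, and the fact that Theorem~\ref{asymptotics} only gives a $\big(1+\mc O(1)\big)$ control on the base directions --- are real, but the paper's own proof of Theorem~\ref{cor-stability} is equally silent on them, so your proposal matches the paper's argument rather than departing from it.
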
 

\begin{remark}
We note that the proof and conclusion of Corollary~\ref{cor-stability0} also apply for any direct product metric on 
$\mb R^k\times \mc S^p\times \mc S^q$, for any nonnegative integers $p$ and $q$, as long as at least one of them is nonzero. 
\end{remark}

\begin{remark}	\label{positive}
We further note that part~\eqref{eq:grad-bound} of the Main Assumptions detailed in
Section~\ref{sec:assumptions}, requires $|\cv\log v_{\alpha,\init}|^2$ to be bounded, which
implies that $\inf u_{\alpha,\init}$ cannot be attained. This means that Theorem~\ref{main} is primarily useful in analyzing
singularities that occur at spatial infinity.

Given initial data in which $\inf u_\alpha$ is attained in a compact set, one could adjust $a_{\alpha,\init}$ downward
in order to apply the Theorem. However, its output would not be sharp in that case, because it then cannot describe the 
developing singularity all the way up to the singular time.
\end{remark}

\section{Assumptions and preliminary estimates}

\subsection{Assumptions}	\label{sec:assumptions}
We begin by establishing some notation.

Given a smooth function $\vp : \mb R_+ \to \mb R_+$, we define
\[
\polyd{\vp} = \sup_{s\in\mb R_+} \left(1 +  \frac{s |\vp'(s)|}{\vp(s)} +  \frac{s^2 |\vp''(s)|}{\vp(s)}\right).
\]
We caution the reader that this is not a norm. The double bars are a reminder that $\polyd{\cdot}$ is a supremum
rather than a pointwise bound. The subscript is a reminder that $\polyd{\vp}$ depends on two derivatives
of $\vp$, and that the quantity in parenthesis is constant if $\vp$ is a monomial.

Given a smooth function $\psi : \mc B \times [0, T] \to \mb R_+$, we define
\[
\parexp{\psi} = \frac{\Big|\heat\psi\Big|}{\psi}+\frac{|\cv \psi|^2}{\psi^2}.
\]
The single bars in $\parexp{\cdot}$ are a reminder that it is a pointwise bound, \emph{i.e.,} a function of
$x\in\mc B$ rather than a supremum. The subscript is a reminder that
$\parexp{\cdot}$ depends on two derivatives, and that $|\cv \psi|^2/\psi^2$ is constant in space if
$\psi(x,t)=e^{d_{g(t)}(x',x)}$ for some $x'\in\mc B$, where $d_{g(t)}(x',x)$ represents distance
with respect to the metric $g(t)$.

In Section~\ref{basic}, we state some useful properties satisfied by $\polyd{\cdot}$ and $\parexp{\cdot}$.
\smallskip

We next define
\begin{equation}	\label{eq-set-G}
\mc G=\left\{G:\mb R_+\rightarrow\mb R_+\colon \| G\|_{\mc G}:= \polyd{G}+
\sup_{s\in\mb R_+}\frac{G(s)}{s^2}<\infty\right\}.
\end{equation}
We note that $s^2\in\mc G$, so $\mc G\neq\emptyset$.  We again caution the reader that we are once more
using nonstandard notation: the symbol $\|\cdot\|_{\mc G}$  defined here is not a norm, and $\mc G$ is not a
vector space.

Any choices of $G_\alpha\in\mc G$ generate associated functions $H_\alpha\in\mc G$ defined by
\begin{equation}	\label{eq-H}
H_\alpha[s_1,\dots, s_A](s_\alpha)=\left(\sum_{\beta=1}^A\frac{G_\beta(s_\beta)}{s_\beta^2}\right)
G_\alpha(s_\alpha).
\end{equation}
The notation reflects the fact that $H_\alpha$ is intended to control the geometry on the fiber $\mc F_\alpha$,
but inputs information from the functions $G_1,\dots,G_A$ used to control the geometry of all fibers $\mc F_1,\dots,\mc F_A$.
For brevity, we write $H_\alpha[s_1,\dots , s_A](s_\alpha)\equiv H_\alpha(s_\alpha)$ below.
The mnemonic theme is that we find it convenient to use $G_\alpha,H_\alpha\in\mc G$ to
control gradient and Hessian terms in~\eqref{eq:grad-bound} and~\eqref{eq:Hessian-bound}, respectively. 
We assume below that our choices of $G_\alpha$ satisfy the inequalities
$\|G_{\alpha}\|_{\mc G} \le \bar{C}_{\alpha}$ for some constants $\bar C_\alpha$,
$\alpha\in \{1,\dots,A\}$.
\smallskip
 
 Throughout this paper, we assume that our initial data consist of a metric
\[
g_{\init}(x)=g_{\mc B}(x,0) + \textstyle\sum_{\alpha=1}^A \big\{a_\alpha+v_\alpha(x,0)\big\}g_{\mc F_\alpha}
\]
on the manifold $\mc M=\mc B\times\mc F_1\times\cdots\times\mc F_A$ satisfying the following:
\begin{assumption}
There exist a constant $C_\init$ and functions $G_\alpha\in\mc G$ such that for $\alpha\in\{1,\dots,A\}$,
\begin{subequations}
\begin{align}
\| G_\alpha\|_{\mc G}&\leq C_\init,\\
\gamma_\alpha(x,0) &\leq C_{\init}\,G_\alpha\big(v_\alpha(x,0)\big)&\mbox{ for all }x\in\mc B,	\label{eq:grad-bound}\\
\chi_\alpha(x,0) &\leq C_{\init}\,H_\alpha\big(v_\alpha(x,0)\big)&\mbox{ for all }x\in\mc B,	\label{eq:Hessian-bound}\\
\rho(x,0)&\leq C_\init&\mbox{ for all }x\in\mc B.
\end{align}
\end{subequations}
We further assume that $|\nabla\Rm[g(\cdot,0)]|_{g(\cdot,0)}$ is bounded and
that at least one $\mu_\alpha>0$, 
\emph{i.e.,} that at least one fiber is a space form of positive Ricci curvature.
\end{assumption}
We note that our choices of $G_\alpha\in\mc G$ may depend on the initial data, and that it follows from
our main results that the choice $\mu_\alpha>0$ forces a singularity at a time $T_{\sing}<\infty$.

\subsection{Basic inequalities}	\label{basic}

It is not difficult to verify the following useful properties of $\polyd{\cdot}$ and $\parexp{\cdot}$:
\begin{equation}		\label{eq:easycalc}
\begin{aligned}
\polyd{\vp_1+\vp_2} &\leq \polyd{\vp_1} + \polyd{\vp_2},\\
\polyd{\vp_1\vp_2} &\leq \polyd{\vp_1}\polyd{\vp_2},\\
\polyd{\vp_1 \circ \vp_2} &\leq \polyd{\vp_1}\polyd{\vp_2}^2,\\
\parexp{\vp \circ \psi} &\leq \polyd{\vp}^2 \parexp{\psi},   \\
\parexp{\psi_1 \psi_2} &\leq \parexp{\psi_1} + \parexp{\psi_2}, \\          
\parexp{\psi_1 +\psi_2} &\leq 2\, (\parexp{\psi_1} + \parexp{\psi_2}).
\end{aligned}
\end{equation}
We explicitly verify the fourth inequality, whose proof is slightly less straightforward than the
proofs of the others.

\begin{proof}
Let $\psi:\mc M\times[0,T]\rightarrow\mb R_+$ and $\vp:\mb R_+\rightarrow\mb R_+$.
Then $(\vp\circ\psi)_t=\vp'(\psi)\psi_t$, $\cv_i(\vp\circ\psi)=\vp'(v)\cv_i\psi$, and
$\Delta(\vp\circ\psi)=\vp'(\psi)\Delta\psi + \vp''(\psi)|\cv\psi|^2$.
Thus one has
\begin{align*}
\frac{\heat(\vp\circ\psi)}{\vp\circ\psi}+\frac{|\cv(\vp\circ\psi)|^2}{(\vp\circ\psi)^2}
	&=\frac{\vp'(\psi)\heat\psi-\vp''(\psi)|\cv\psi|^2}{\vp(\psi)}+\frac{(\vp'(\psi))^2|\cv\psi|^2}{(\vp(\psi))^2}\\
	&=\frac{\psi \vp'(\psi)}{\vp(\psi)}\frac{\heat \psi}{\psi}-\frac{\psi^2\vp''(\psi)}{\vp(\psi)}\frac{|\cv\psi|^2}{\psi^2}\\
	&\quad+\Big\{\frac{\psi\vp'(\psi)}{\vp(\psi)}\Big\}^2\frac{|\cv\psi|^2}{\psi^2},
\end{align*}
from which it is easy to see that $\parexp{\vp \circ \psi} \leq \polyd{\vp}^2 \parexp{\psi}$.
\end{proof}

\subsection{Differential inequalities}

We now estimate the evolution equations of the quantities we work with throughout this paper:
$\gamma_{\alpha}$, $\chi_{\alpha}$, and $\rho$.

\begin{lemma}
\label{lemma-ev-eq}
If $\gamma_{\alpha}$, $\chi_{\alpha}$, and $\rho$ are as in \eqref{eq-def-quantity}, then there exists a uniform constant $C_N$  that depends only on the dimension vector $\vec N=(n,n_\alpha)$ such that we have the estimates
\begin{equation}
\label{eq-nabla-v}
\heat \gamma_{\alpha} \le -\frac 12 \, \frac{|\nabla \gamma_{\alpha}|^2}{\gamma_{\alpha}} + 6\, \left(\frac{\gamma_{\alpha}}{u_{\alpha}^2}\right)\, \gamma_{\alpha} ,
\end{equation}
\begin{equation}	\label{kappa-evolution}
\heat\chi_{\alpha} \le -\frac12\, \frac{|\nabla\chi_{\alpha}|^2}{\chi_{\alpha}} + C_N L \chi_{\alpha} + C_N L \sum_{\beta=1}^A \frac{\gamma_{\beta}}{u_{\beta}^2}\, \gamma_{\alpha},
\end{equation}
and
\begin{equation}	 
\label{eq-rho}
\heat\rho \le -\frac{|\nabla\rho|^2}{\rho} + C_N L^3,
\end{equation}
where
\[
L := \rho^{1/2} + \sum_{\beta=1}^A \frac{\gamma_{\beta}}{u_{\beta}^2} + \sum_{\beta =1}^A \frac{\chi_{\beta}^{1/2}}{u_{\beta}}.
\]
\end{lemma}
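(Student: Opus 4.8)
The strategy is a direct Bochner-type computation for each of the three quantities, carried out on the total space $(\mc M, g)$ so that the heat operator $\heat$ is the one appearing in the statement. For $\gamma_\alpha = |\cv v_\alpha|^2$, I would start from the fiber equation~\eqref{eq:evolve-fiber}, rewritten for $v_\alpha = u_\alpha - (a_\alpha - \mu_\alpha t)$ as $\heat v_\alpha = -u_\alpha^{-1}|\cv v_\alpha|^2$ (the constant $\mu_\alpha$ cancels, since $\cv v_\alpha = \cv u_\alpha$). Commuting $\cv$ past $\heat$ and applying Bochner gives $\heat\gamma_\alpha = -2|\cv^2 v_\alpha|^2_g - 2\Rc(\cv v_\alpha,\cv v_\alpha) - 2\lp\cv v_\alpha, \cv(u_\alpha^{-1}\gamma_\alpha)\rp$. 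The curvature term is controlled using the formulas from Appendix~\ref{multiply-warped} (one must check its sign/size against $\gamma_\alpha/u_\alpha^2$ times $\gamma_\alpha$), and the reaction term is expanded via $\cv(u_\alpha^{-1}\gamma_\alpha) = u_\alpha^{-1}\cv\gamma_\alpha - u_\alpha^{-2}\gamma_\alpha\cv u_\alpha$; the cross term $u_\alpha^{-1}\lp\cv v_\alpha,\cv\gamma_\alpha\rp$ combines with the Kato inequality $|\cv^2 v_\alpha|_g^2 \ge \tfrac14|\cv\gamma_\alpha|^2/\gamma_\alpha$ to produce the good negative gradient term $-\tfrac12|\cv\gamma_\alpha|^2/\gamma_\alpha$, while the remaining terms are all bounded by a constant (absorbed into the $6$) times $(\gamma_\alpha/u_\alpha^2)\gamma_\alpha$.

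For $\chi_\alpha = |\cv^2 v_\alpha|^2_{g_{\mc B}}$ and $\rho = |\Rm[g_{\mc B}]|^2_{g_{\mc B}}$, the computations are heavier because these are measured in $g_{\mc B}$, not $g$, but the overall shape is the same: differentiate the relevant evolution equation twice (for $\chi_\alpha$, commute $\cv^2$ past the equation for $v_\alpha$; for $\rho$, use the Ricci-flow evolution of the base curvature coupled to~\eqref{eq:evolve-base}), apply Bochner/Kato to extract the negative $-|\cv(\cdot)|^2/(\cdot)$ term, and collect the zeroth-order terms. Since the base evolves by~\eqref{eq:evolve-base} with a forcing term involving $u_\alpha^{-1/2}\cv^2 u_\alpha^{1/2}$, the right-hand sides will naturally organize into the quantities $\rho^{1/2}$, $\gamma_\beta/u_\beta^2$, $\chi_\beta^{1/2}/u_\beta$ — i.e.\ the factor $L$ — and one just has to bookkeep the powers to see that $\chi_\alpha$ picks up $L\chi_\alpha$ plus $L(\sum\gamma_\beta/u_\beta^2)\gamma_\alpha$, and $\rho$ picks up $L^3$. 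Here I would lean on the curvature formulas of Appendix~\ref{multiply-warped} (cited in~\eqref{eq:curv-near-cyl} and Remark~\ref{rem-curv}) to express $\Rm[g]$, $\Rc[g]$, and the connection difference between $g$ and $g_{\mc B}$ in terms of $u_\alpha$ and its derivatives, so that every error term can be classified by its scaling weight.

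The main obstacle is the $\rho$ estimate, and more precisely the interaction between the base Levi-Civita connection and the total-space one: the heat operator in the lemma is $\partial_t - \Delta_{\mc M}$, but $\rho$ is built from $\Rm[g_{\mc B}]$, so I must carefully convert $\Delta_{\mc M}\rho$ to $\Delta_{\mc B}\rho$ using~\eqref{eq:compare-Laplacians} and absorb the discrepancy $\tfrac12\sum n_\alpha u_\alpha^{-1}\lp\cv u_\alpha,\cv\rho\rp$ into the good gradient term (this is why~\eqref{eq-rho} has coefficient $1$ rather than $\tfrac12$ on the gradient term — there is less room). The same care is needed for $\chi_\alpha$. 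A secondary difficulty is keeping the constant $C_N$ genuinely dependent only on $\vec N$: the curvature of the base and the warping derivatives appear only through $\rho$, $\gamma_\beta$, $\chi_\beta$, which are the variables, so no geometric constants leak in, but one must verify that the combinatorial constants from Bochner and from the Appendix formulas are dimensional. Once the zeroth-order terms are grouped, bounding them by $C_N L \chi_\alpha$, $C_N L(\sum\gamma_\beta/u_\beta^2)\gamma_\alpha$, and $C_N L^3$ respectively is a routine application of Young's inequality, matching homogeneities (degree $2$ in curvature-type quantities for $\gamma_\alpha$ and $\chi_\alpha$, degree $3$ for $\rho$ once one counts $\rho \sim L^2$).
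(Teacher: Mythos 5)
Your overall strategy---a Bochner-type computation for each quantity, Kato's inequality to extract the negative gradient terms, and the Appendix curvature formulas to organize the reaction terms by scaling weight---is the same as the paper's, and your identification of the $\Delta_{\mc M}$-versus-$\Delta_{\mc B}$ (equivalently $|\cdot|_g$-versus-$|\cdot|_{g_{\mc B}}$) discrepancy as the main obstacle for $\chi_\alpha$ and $\rho$ is precisely where the paper spends its effort (Corollaries~\ref{estimate-norm-Laplacian} and~\ref{estimate-Laplacian|Rm|}, together with Claim~\ref{lem:Tim}, which shows that $\cv\Rm$ has no components with exactly one vertical index and is what makes Kato applicable to the mixed-metric norm defining $\rho$).

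There is, however, a concrete error in your starting point for $\gamma_\alpha$. You write
\[
\heat\gamma_\alpha = -2|\cv^2 v_\alpha|^2 - 2\Rc(\cv v_\alpha,\cv v_\alpha) - 2\big\lp\cv v_\alpha,\cv(u_\alpha^{-1}\gamma_\alpha)\big\rp
\]
and propose to bound the Ricci term by a multiple of $(\gamma_\alpha/u_\alpha^2)\,\gamma_\alpha$. That bound is false: by~\eqref{eq:Rc-horizontal} the horizontal Ricci curvature contains $\Rc[g_{\mc B}]$ and the Hessians $u_\beta^{-1}\cv^2 u_\beta$, so $|\Rc(\cv v_\alpha,\cv v_\alpha)|$ is controlled only by $L\,\gamma_\alpha$, not by $(\gamma_\alpha/u_\alpha^2)\,\gamma_\alpha$. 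The correct resolution is that this term is not present at all: since $\partial_t g^{ij}=2R^{ij}$ under Ricci flow, differentiating $\gamma_\alpha=g^{ij}\cv_i v_\alpha\cv_j v_\alpha$ in time produces $+2\Rc(\cv v_\alpha,\cv v_\alpha)$, which exactly cancels the $-2\Rc$ coming from the Bochner identity; this is the computation in Appendix~\ref{evolve-curvatures} leading to~\eqref{eq:gamma-evolution-derived}. Your version, taken literally, yields only $\heat\gamma_\alpha\le -\tfrac12|\cv\gamma_\alpha|^2/\gamma_\alpha + C_N L\,\gamma_\alpha$, and this loss is not cosmetic: in Proposition~\ref{lem:Tim-16} the coefficient of $\gamma_\alpha$ in~\eqref{eq-nabla-v} is fed into Lemma~\ref{lem:Tim-17} as the control function $W=E_\alpha(v_\alpha)=G_\alpha(v_\alpha)/v_\alpha^2$, and the fact that this quantity can be \emph{small} (rather than merely bounded, as $L$ is) is what produces the refined bound $\gamma_\alpha\le C_\init G_\alpha(1+C_*tE_\alpha)$ and hence the $o(1)$ asymptotics in Theorem~\ref{asymptotics} and Proposition~\ref{uniform}. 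So at this step you must carry out a cancellation, not an estimate. The rest of your plan for $\chi_\alpha$ and $\rho$ is structurally sound but needs the same care with the time derivative of the metric; the paper builds these cancellations in automatically by using the Lichnerowicz-Laplacian identity $\heat_{\mc L}\cv^2 v=\cv^2\heat v$ for $\chi_\alpha$ and the standard Ricci-flow evolution equation for $\Rm$ for $\rho$.
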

The helpful structure here is that we have negative gradient terms in all three equations.
In~\eqref{eq-nabla-v} and~\eqref{kappa-evolution}, we also have what may be regarded as linear terms
with coefficients that can be bounded in terms of the quantities under consideration;
in~\eqref{kappa-evolution} and~\eqref{eq-rho}, we have inhomogeneous terms that may be similarly bounded.

\begin{proof}
In the proof, we use the same symbol $C_N$ to denote constants that might differ from
line to line but that all depend only on the dimension vector $\vec N=(n,n_\alpha)$. 

An easy computation (see Appendix~\ref{evolve-curvatures}) implies that
\[
\heat \gamma_{\alpha} =  - 2|\nabla^2 v_{\alpha}|^2 
+ 2\,\frac{|\nabla v_{\alpha}|^4}{u_{\alpha}^2} - 4\,\frac{\nabla^2v_{\alpha}(\nabla v_{\alpha}, \nabla v_{\alpha})}{u_{\alpha}}.
\]
Using Cauchy--Schwarz and Kato's inequality ($|\nabla |\nabla v_{\alpha}|| \le |\nabla^2 v_{\alpha}|$), we get \eqref{eq-nabla-v}.
\smallskip

To obtain~\eqref{kappa-evolution}, note that in Appendix \ref{evolve-curvatures} we compute that
\begin{equation*}	
\begin{split}
\heat\chi_{\alpha}&\leq-2|\cv^3 v_{\alpha}|_{g_{\mc B}}^2+4\Rm_{\mc B}(\cv^2 v_{\alpha}, \cv^2 v_{\alpha})+2u_{\alpha}^{-2}\gamma_{\alpha}\chi_{\alpha}\\
&\quad-2u_{\alpha}^{-3}\lp\cv v_{\alpha},\cv\gamma_{\alpha}\rp\gamma_{\alpha}+4u_{\alpha}^{-2}\lp\cv^2 v_{\alpha},\cv v_{\alpha}\otimes\cv\gamma_{\alpha}\rp\\
&\quad-2u_{\alpha}^{-1}\lp\cv^2 v_{\alpha},\cv^2 \gamma_{\alpha}\rp_{g_{\mc B}}
+Nu_{\alpha}^{-2}\gamma_{\alpha}\big\{-\chi_{\alpha}+\frac14u_{\alpha}^{-1}\lp\cv\ v_{\alpha},\cv\gamma_{\alpha}\rp\big\}\\
&\quad+C_N \Big(\sum_{\alpha=1}^A |\cv \log u_\alpha|^2\Big) |\cv^2v_{\alpha}| |\cv^2 v_{\alpha}|_{g_{\mc B}}.
\end{split}
\end{equation*}
An easy computation using results about the Levi-Civita connection $\Gamma$
derived in Appendix~\ref{multiply-warped} yields
\[
  -|\nabla^3 v_{\alpha}|^2_{g_{\mc B}}
  \le
  -\frac
  {|\nabla\, |\nabla^2 v_{\alpha}|^2_{g_{\mc B}}\,|^2_{g_{\mc B}}}
  {2\chi_{\alpha}}
  = -\frac{|\nabla\chi_{\alpha}|^2_{g_{\mc B}}}{2\chi_{\alpha}} = -\frac{|\nabla\chi_{\alpha}|^2}{2\chi_{\alpha}}\]
and
\begin{equation*}
\begin{split}
\left|u_{\alpha}^{-1}\lp\cv^2 v_{\alpha},\cv^2 \gamma_{\alpha}\rp_{g_{\mc B}}\right| &\le C_N \, \left(\frac{\chi_{\alpha}^{3/2}}{u_{\alpha}} +  \frac{\gamma_{\alpha}^{1/2}\, \chi_{\alpha}^{1/2}}{u_{\alpha}}\, |\nabla^3 v_{\alpha}|_{g_{\mc B}}\right) \\
&\le  C_N \, \left(\frac{\chi_{\alpha}^{3/2}}{u_{\alpha}} +  \frac{\gamma_{\alpha}\, \chi_{\alpha}}{u_{\alpha}^2}\right) + |\nabla^3 v_{\alpha}|^2_{g_{\mc B}}.
\end{split}
\end{equation*}
Again using results about the Hessian from Appendix \ref{multiply-warped}, we see that
\[|\nabla^2 v_{\alpha}| \le C_N\, \left(\chi_{\alpha}^{1/2} + \frac{\gamma_{\beta}^{1/2}}{u_{\beta}}\, \gamma_{\alpha}^{1/2}\right),\]
implying that
\[|\nabla^2 v_{\alpha}| |\nabla^2 v_{\alpha}|_{g_{\mc B}} \le C_N \, \left(\chi_{\alpha} + \sum_{\beta=1}^A \frac{\gamma_{\beta}}{u_{\beta}^2}\, \gamma_{\alpha}\right),\]
where we use the Cauchy--Schwarz inequality.
Putting these estimates together yields
\begin{equation*}
\begin{split}
\heat \chi_{\alpha} &\le - \frac{|\nabla \chi_{\alpha}|^2}{2\chi_{\alpha}} + C_N \chi_{\alpha}\, \left(\rho^{1/2} + \frac{\gamma_{\alpha}}{u_{\alpha}^2} + \frac{\chi_{\alpha}^{1/2}}{u_{\alpha}} + \sum_{\beta=1}^A \frac{\gamma_{\beta}}{u_{\beta}^2}\right) \\
&+ C_N \, \left(\frac{\chi_{\alpha}^{1/2} \gamma_{\alpha}^2}{u_{\alpha}^3} +  \Big(\sum_{\beta=1}^A \frac{\gamma_{\beta}}{u_{\beta}^2}\Big)^2 \gamma_{\alpha}\right) \\
&\le  - \frac{|\nabla \chi_{\alpha}|^2}{2\chi_{\alpha}}  + C_N \, L\,\chi_{\alpha} + C_N\, \gamma_{\alpha}\, \sum_{\beta=1}^A \frac{\gamma_{\beta}}{u_{\beta}^2}\, \left(\sum_{\beta=1}^A \frac{\chi_{\beta}^{1/2}}{u_{\beta}} + \sum_{\beta=1}^A \frac{\gamma_{\beta}}{u_{\beta}^2}\right) \\
&\le  - \frac{|\nabla \chi_{\alpha}|^2}{2\chi_{\alpha}}  + C_N \, L\,\chi_{\alpha} + C_N \, L\,\sum_{\beta=1}^A \frac{\gamma_{\beta}}{u_{\beta}^2}\, \gamma_{\alpha},
\end{split}
\end{equation*}
as claimed.
\smallskip

Finally, as in Appendix \ref{evolve-curvatures}, denote by $\mc H$ the (integrable) horizontal distribution of  $\mc M$ and by
$\Rm_{\mc H\otimes\mc H}$ the restriction
\[
\Rm_{\mc H\otimes\mc H}:=\Rm \big|_{\mc H \otimes T \mc M \otimes T \mc M \otimes \mc H}.
\]
Our computation in Appendix \ref{evolve-curvatures} shows that $\rho$ evolves by
\begin{equation*}	
\begin{split}
\heat\rho &\leq-2|\cv\Rm|^2_{g_{\mc B}}+C_n\rho^{3/2}\\
&\quad+2\sum_{\alpha=1}^A n_\alpha\Big\{
u_\alpha^{-2}\Rm_{\mc B}(\cv^2v_\alpha,\cv^2v_\alpha)\\
&\qquad\qquad\qquad-2u_\alpha^{-3}\Rm_{\mc B}(\cv^2v_\alpha,\cv v_\alpha\otimes\cv v_\alpha)\Big\}\\
&\quad+C_N \Big(\sum_{\alpha=1}^A |\cv \log u_\alpha|^2\Big) |\Rm|_{g_{\mc B}} |\Rm_{\mc H\otimes\mc H}|_g.
\end{split}
\end{equation*}
Claim~\ref{lem:Tim} in Appendix~\ref{evolve-curvatures} shows that $\cv\Rm$ vanishes if exactly one index is vertical.
Thus by Kato's inequality for tensors, we have 
\[-|\nabla\Rm|_{g_{\mc B}}^2 \le - |\nabla |\Rm|_{g_{\mc B}}|^2_{g_{\mc B}} = -\frac{|\nabla\rho|^2_{g_{\mc B}}}{\rho} = -\frac{|\nabla\rho|^2}{\rho}.\]
Moreover, using our computations of curvature components in Appendix \ref{multiply-warped}, we immediately get
\[|\Rm_{\mc H\otimes\mc H}|_g \le C_N\, \left(\sum_{\beta=1}^A \frac{\gamma_{\beta}}{u_{\beta}^2}
+ \sum_{\beta=1}^A \frac{\chi_{\beta}^{1/2}}{u_{\beta}} + \rho^{1/2} \right).\]
All of these together imply that
\begin{equation*}
\begin{split}
\heat\rho &\le -\frac{|\nabla\rho|^2}{\rho} + C_N \rho^{3/2} + C_N\sum_{\beta=1}^A \frac{\rho^{1/2}\chi_{\beta}}{u_{\beta}^2} + C_N \sum_{\beta=1}^A \frac{\rho^{1/2}\chi_{\beta}^{1/2}\gamma_{\beta}}{u_{\beta}^3} \\
&+ C_N \sum_{\beta=1}^A \frac{\gamma_{\beta}}{u_{\beta}^2} \rho^{1/2} \left(\sum_{\beta=1}^A \frac{\gamma_{\beta}}{u_{\beta}^2}
+ \sum_{\beta=1}^A \frac{\chi_{\beta}^{1/2}}{u_{\beta}} + \rho^{1/2}\right)\\
&\le  -\frac{|\nabla\rho|^2}{\rho} + C_N L^3,
\end{split}
\end{equation*}
yielding \eqref{eq-rho}. This completes the proof.
\end{proof}

\section{Analysis}	\label{sec:analysis}
In this section, we prove estimates for solutions of parabolic equations on noncompact manifolds evolving by
Ricci flow. Among the results we obtain below are Propositions~\ref{lem:Tim-15}, \ref{cor-improvement}, \ref{lem:Tim-16},
and~\ref{uniform} which, as discussed in the introduction, play a major role in the proof of  Theorem~\ref{main}.

\subsection{A noncompact maximum principle}		\label{maximum_principle}

The goal of our first result, Lemma~\ref{lem:Tim-17},  is to obtain estimates for a function $U$ in terms of
a ``comparison function'' $V$
and a ``control function'' $W$. For example, we often take $U$ to be a function that we want to estimate on
a short time interval $[t_0,t_1]$, $V$ to be the same function at the initial  time $t_0$, and $W$ to be a large constant
that depends on bounds for the curvatures on $[t_0,t_1]$. Our proof of the lemma proceeds by applying a
noncompact maximum principle to the quantity $U/V$, thereby allowing us to bound it suitably from above.
We use Lemma~\ref{lem:Tim-17} extensively in the proofs below.

\begin{lemma}	\label{lem:Tim-17}
Let $\big(\mc M,g(t)\big)$ be a smooth solution of Ricci flow for $t\in[0,T]$, and 
let $U,V,W:\mc M \times [0, T] \rightarrow\mb R_+$ be smooth functions.
Suppose that there exist constants $0<c<1<C$ such that
\begin{align*}
\heat U&\leq C(UW+VW)-c\frac{|\cv U|^2}{U},\\
\frac{\big|\heat V\big|}{V}+\frac{|\cv V|^2}{V^2}&\leq CW,\\
\big|\heat W\big|+|\cv W|^2&\leq CW, \\
W\leq C,
\end{align*}
where the Laplacian and norms above are computed with respect to the solution $g(t)$ of Ricci flow.

Then there exist $\lambda=\lambda(c,C)$ and $T'=T'(c,C,T)\in(0,T]$ such that for all $t\in[0,T']$,
\[
\heat\left\{\frac{U}{V} -\lambda t\left(1 + \frac{U}{V}\right)W\right\}\leq0.
\] 
\smallskip

Moreover, if there exist a point $x'\in\mc B$ and a constant $C'$ such that one has
$U(x,t)\leq C'e^{C' d^2_{g(t)}(x',x)}V(x, t)$ on $[0,T']$, then for $t\in[0,T']$,
\[
\frac{U(x,t)}{V(x,t)}
\leq 
\sup_{y\in\mc M}\frac{U(y,0)}{V(y,0)}
+ 2\lambda t  W(x,t)\, \left(1 + \sup_{y\in \mc M} \frac{U(y,0)}{V(y,0)}\right).
\]
\end{lemma}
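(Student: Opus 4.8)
The plan is to prove the two assertions in sequence, the first by a direct computation and the second by an application of a noncompact maximum principle to the quantity $Z := U/V - \lambda t(1 + U/V)W$.

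\medskip

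\noindent\textbf{Step 1: the differential inequality for $U/V$.} First I would compute $\heat(U/V)$ using the quotient rule for the heat operator. Writing $Q := U/V$, one has
\[
\heat Q = \frac{\heat U}{V} - \frac{U}{V^2}\heat V + \frac{2}{V}\lp\cv Q,\cv V\rp + \frac{2U}{V^3}|\cv V|^2,
\]
where the cross term arises because $\Delta(U/V) \ne (\Delta U)/V - (U/V^2)\Delta V$. I would then feed in the three structural hypotheses: $\heat U \le C(UW+VW) - c|\cv U|^2/U$, the bound $|\heat V|/V + |\cv V|^2/V^2 \le CW$, and (for later) $|\heat W| + |\cv W|^2 \le CW$ together with $W \le C$. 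The terms involving $V$ produce contributions bounded by $C'QW$ and $C'W$ (absorbing constants depending only on $c,C$), while the gradient term $-c|\cv U|^2/U$ should be used, via $|\cv Q|^2 \le 2|\cv U|^2/V^2 + 2U^2|\cv V|^2/V^4$ and the elementary identity $|\cv U|^2/U = Q^{-1}V^{-2}\cdot(\text{something}) $, to dominate the bad cross term $\frac{2}{V}\lp\cv Q,\cv V\rp$ after a Cauchy--Schwarz/Young split. The upshot is an inequality of the form $\heat Q \le C_1(1+Q)W - c_1|\cv Q|^2/Q$ for constants $C_1, c_1$ depending only on $c, C$. One then computes $\heat\{\lambda t(1+Q)W\}$ directly — the $\pd_t$ hitting $t$ gives $\lambda(1+Q)W$, and the remaining terms (from $\pd_t$ and $\Delta$ hitting $Q$ and $W$, plus the mixed Laplacian cross terms) are all controlled by $\lambda t$ times quantities bounded using the hypotheses and $W \le C$. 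Choosing $\lambda$ large enough relative to $C_1$ so that the leading $-\lambda(1+Q)W$ beats $C_1(1+Q)W$, and then $T' = T'(c,C,T)$ small enough that all the $\lambda t(\cdots)$ corrections (of order $\lambda T'$) are absorbed, yields $\heat Z \le 0$ on $[0,T']$. The negative gradient term $-c_1|\cv Q|^2/Q$ is available as a buffer to absorb any stray gradient contributions; I expect keeping careful track of it through the quotient-rule cross terms to be the fiddliest part of Step 1.

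\medskip

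\noindent\textbf{Step 2: the maximum principle.} Given $\heat Z \le 0$ on $[0,T']$ and the growth hypothesis $U(x,t) \le C' e^{C' d^2_{g(t)}(x',x)}V(x,t)$, i.e. $Q(x,t) \le C' e^{C' d^2_{g(t)}(x',x)}$, I would apply a noncompact (Karp--Li / Ecker--Huisken-type) maximum principle for supersolutions of the heat equation on a Ricci flow background. Since $Z \le Q \le C' e^{C' d^2_{g(t)}}$ has at most Gaussian growth, and $d^2_{g(t)}$ along Ricci flow has the standard distance-distortion control, the maximum principle gives $\sup_{\mc M} Z(\cdot,t) \le \sup_{\mc M} Z(\cdot,0) = \sup_{\mc M} Q(\cdot,0)$ for $t \in [0,T']$ (the $t=0$ value of $Z$ is just $Q(\cdot,0)$ since the correction term carries a factor $t$). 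Unwinding the definition of $Z$,
\[
Q(x,t) \le \sup_{y\in\mc M} Q(y,0) + \lambda t(1+Q(x,t))W(x,t),
\]
and for $T'$ further shrunk so that $\lambda t W(x,t) \le \lambda C T' \le \tfrac12$ on $[0,T']$, one absorbs the $\lambda t Q(x,t)W(x,t)$ term on the left: $\tfrac12 Q(x,t) \le \sup_y Q(y,0) + \lambda t W(x,t)$, hence $Q(x,t) \le 2\sup_y Q(y,0) + 2\lambda t W(x,t)(1 + \sup_y Q(y,0))$ after a mild rearrangement, which is the claimed bound.

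\medskip

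\noindent\textbf{Main obstacle.} I expect the principal difficulty to be \emph{not} in either algebraic step but in justifying the noncompact maximum principle in Step 2: one must verify that the Gaussian growth hypothesis on $Q$ is preserved (or at least controlled) on the time interval, that the Ricci flow metrics $g(t)$ remain complete with the requisite distance/volume comparison so that a cutoff-function argument à la Karp--Li goes through, and that $Z$ (not just $Q$) satisfies the growth bound needed — which it does, since $Z \le Q$. A secondary, purely computational hazard is the bookkeeping in Step 1: the quotient rule for $\heat$ on $U/V$ generates several cross terms, and one must be disciplined about which go into the $-c_1|\cv Q|^2/Q$ sink, which into the $C_1(1+Q)W$ term, and how the choice of $\lambda$ and then $T'$ is made (\emph{$\lambda$ first, depending only on $c,C$; then $T'$ depending on $c,C,T$}) so that no circularity creeps in.
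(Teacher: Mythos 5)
Your proposal follows essentially the same route as the paper's proof: compute $\heat(U/V)$, use the hypothesis on $V$ together with a Cauchy--Schwarz treatment of the cross term and the identity relating $|\cv(U/V)|^2$ to $|\cv U|^2$, $\lp\cv U,\cv V\rp$, and $|\cv V|^2$ to arrive at $\heat Q\leq C_1(1+Q)W-c_1|\cv Q|^2/Q$; then form $Z=Q-\lambda t(1+Q)W$, choose $\lambda$ first and $T'$ second, and invoke a noncompact maximum principle under the quadratic-exponential growth hypothesis (the paper cites Theorem~12.22 of~\cite{vol-144} for exactly this step). Two small corrections: your quotient-rule formula carries a spurious $+2UV^{-3}|\cv V|^2$ --- the correct identity is $\heat Q=\heat U/V-Q\,\heat V/V+2V^{-1}\lp\cv Q,\cv V\rp$ --- though the stray term is harmless since $|\cv V|^2/V^2\leq CW$; more importantly, your final absorption with $\lambda tW\leq\tfrac12$ yields $2\sup_y Q(y,0)+\cdots$, which is \emph{not} the claimed bound. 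The coefficient $1$ on $\sup_y Q(y,0)$ is load-bearing downstream (the open--closed argument in the proof of Theorem~\ref{main} needs Proposition~\ref{lem:Tim-16} to \emph{improve} the assumed factor $2C_\init$ back to $C_\init(1+C_*t\cdots)$, which fails if the lemma doubles the initial supremum), so you should instead use $(1-\lambda tW)^{-1}\leq 1+2\lambda tW$ for $\lambda tW\leq\tfrac12$ and expand, which recovers the stated inequality exactly.
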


\begin{proof}
We define $X=U/V$ and compute that
\[
\heat X = \frac{\heat U}{V}+2X\frac{\lp\cv U, \cv V\rp}{UV}-2X\frac{|\cv V|^2}{V^2}-X\frac{\heat V}{V}.
\]
We split the second term on the \textsc{rhs} above as follows:
\begin{equation}	\label{eq:split}
2X\frac{\lp\cv U, \cv V\rp}{UV}=(2-c) X\frac{\lp\cv U, \cv V\rp}{UV}+c X\frac{\lp\cv U, \cv V\rp}{UV}.
\end{equation}
In what follows, we denote by $C'=C'(c,C)$ a constant that may change from line to line.
We use the weighted Cauchy--Schwarz inequality to estimate the first term on the \textsc{rhs} of~\eqref{eq:split} by
\[
(2-c)X\frac{|\lp\cv U, \cv V\rp|}{UV}\leq \frac{c}{2}\frac{|\cv U|^2}{UV}+C'\frac{|\cv V|^2}{V^2}X,
\]
and rewrite the second term as
\[
cX\frac{\lp\cv U, \cv V\rp}{UV}=\frac{c}{2}\Big(\frac{|\cv U|^2}{UV}+X\frac{|\cv V|^2}{V^2}-\frac{|\cv X|^2}{X}\Big),
\]
obtaining
\[
\heat X \leq \frac{1}{V}\left\{\heat U+c\frac{|\cv U|^2}{U}\right\}
+X\left\{\frac{\Big|\heat V\big|}{V}+C'\frac{|\cv V|^2}{V^2}\right\}-\frac{c}{2}\frac{|\cv X|^2}{X}. 
\]
Thus our assumptions on $U$ and $V$ imply that
\begin{align}
\heat X &\leq \frac{C(U+V)W}{V}+C'CWX -\frac{c}{2}\frac{|\cv X|^2}{X}\nonumber\\
&\leq C'CW(1+X)-\frac{c}{2}\frac{|\cv X|^2}{X}. \label{eq:x_heat}
\end{align}

Now for $\lambda=\lambda(c,C) > 0$ to be chosen, we define $Y = X - (\lambda t)(1+X)W$ and compute that
\begin{align*}
\heat Y &= (1-\lambda tW)\heat X-\lambda t(1+X)\heat W\\
&\quad -\lambda(1+X)W+2\lambda t\lp\cv X,\cv W\rp.
\end{align*}
If $t\leq T_1:=1/(\lambda C)$, then $1-\lambda tW\geq0$, so we may apply estimate~\eqref{eq:x_heat} to the first term
on the \textsc{rhs} above. We then use our assumption on $\big|\heat W\big|$ to estimate the second term and apply
Cauchy--Schwarz to the last term, obtaining
\begin{align*}
\heat Y&\leq(1-\lambda tW)\left\{ C'CW(1+X)-\frac{c}{2}\frac{|\cv X|^2}{X}\right\}+C\lambda tW(1+X)\\
&\quad-\lambda W(1+X)+\lambda t\left(\frac{|\cv X|^2}{X}+X|\cv W|^2\right).
\end{align*}
By using our assumption that $|\cv W|^2\leq CW$, we simplify this to
\begin{align*}
\heat Y&\leq W(1+X)\big\{-\lambda+CC'(1-\lambda tW)+2C\lambda t\big\}\\
&\quad+\frac{|\cv X|^2}{X}\big\{-\frac{c}{2}(1-\lambda tW)+\lambda t\big\}.
\end{align*}
Then choosing $\lambda=2C'$ and using our upper bound for $W$, we obtain
\[
\heat Y\leq W(1+X)CC'(-1+4Ct)+\frac{|\cv X|^2}{X}\big\{-\frac{c}{2}+CC'(Cc+2)t\big\}.
\]
The \textsc{rhs} is nonpositive provided that $t\leq T_2:=\frac{1}{4C}$ and
$t\leq T_3:=\frac{c}{2CC'(Cc+2)}$. Thus we choose $T'=\min\{T_1,T_2,T_3\}$.

Finally, we justify applying the weak maximum principle on the noncompact manifold $\mc M$ in the form
detailed in Theorem~12.22 of~\cite{vol-144}. Specifically,
since $W$ is bounded, the assumption that $U(x,t)/V(x,t)\leq C'e^{C' d^2_{g(t)}(x',x)}$ implies easily
that Theorem~12.22 applies to $Y(x,t)-\sup_{y\in\mc M}Y(y,0)$, allowing us to conclude
\[X(x,t) \le \lambda t W(x,t) (1 + X(x,t)) + \sup_y X(y,0),\]
implying that
\[X(x,t) \le \frac{\lambda t}{1-\lambda t W(x,t)}\, W(x,t) + \frac{1}{1-\lambda t W(x,t)}\, \sup_{y\in \mc M} X(y,0).\]
We can decrease $T'$ if necessary to make $\lambda t W(x,t)$ small enough for all $t\in [0,T']$ so that the following holds:
\begin{align*}
X(x,t) &\le \lambda t \Big(1 + 2\lambda t W(x,t)\Big)\, W(x,t) + \Big(1 + 2\lambda t W(x,t)\Big)\, \sup_{y\in \mc M} X(y,0) \\
&= \sup_{y\in \mc M} X(y,0) + W(x,t)\, \Big(\lambda t + 2\lambda^2 t^2 W(x,t)^2 + 2\lambda t\, \sup_{y\in \mc M} X(y,0)\Big)\\
&  \le \sup_{y\in \mc M} X(y,0) + 2\lambda t W(x,t)\, \Big(1 + \sup_{y\in \mc M} X(y,0)\Big).
\end{align*}
This completes the proof.
\end{proof}

\subsection{Main estimates}	\label{main_estimates}
We now establish two pairs of Propositions that provide the key results we need to prove Theorem~\ref{main}.

In Propositions~\ref{lem:Tim-15} and \ref{cor-improvement}, we obtain bounds for $\gamma$, $\chi$, and $\rho$
on a solution that is smooth on a compact time interval $[t_0,t_1]$.\footnote{In \emph{Step 1} of our proof of 
Theorem~\ref{main}, we initially apply Propositions~\ref{lem:Tim-15} and \ref{cor-improvement}
with $t_0=0$. In \emph{Step 2} however, we need to apply them at some $t_0>0$.}
 A strength of these results is that they allow us to extend bounds that hold at $t_0$ to the entire
interval $[t_0,t_1]$, which cannot be taken for granted because $\mc M$ is noncompact; a weakness is that
the constant we obtain for these bounds depends on an upper bound for the full curvature tensor on
$[t_0,t_1]$.

In Propositions~\ref{lem:Tim-16} and \ref{uniform},
we show that if the functions $v_\alpha$ and their derivatives satisfy uniform bounds on an interval $[0,T]$, then
those bounds can be improved, independent of the curvature, at least on an interval $[0,T_*]$, with
$0<T_*\leq T$.

\begin{prop}	\label{lem:Tim-15}
Let $(\mc M,g_\init)$ satisfy the Main Assumptions in Section~\ref{sec:assumptions}.

Suppose a solution $g(t)$ of Ricci flow exists for $[t_0,t_1]$, satisfying the initial bounds
$\rho(x,t_0)\leq C_0C_{\init}$ and 
\begin{equation}	\label{eq-assump}
v_\alpha(x,t_0)>0,\quad
\gamma_\alpha(x,t_0)\leq C_0C_{\init}\,G_\alpha\big(v_\alpha(x,t_0)\big),\quad
\chi_\alpha(x,t_0)\leq C_0C_{\init}\,H_\alpha\big(v_\alpha(x,t_0)\big),
\end{equation}
along with the uniform bound $\sup_{(x,t)\in\mc B\times[t_0,t_1]}|\Rm(x,t)|\leq C_1$,
for some constants $C_0,C_1$.

Then there exists $C'=C'(C_\init,C_0, C_1)$ and $T' = T'(C_\init, C_0, C_1) \in (t_0, t_1]$ such that for all $t\in[t_0,T']$, one has
\begin{subequations}		
\begin{align}
\label{eq-v-lower-15}
v_\alpha(x,t) &\geq \frac{v_\alpha(x,t_0)}{1+C'(t-t_0)},\\
\label{eq-grad-15}
\gamma_\alpha(x,t) &\leq C_0 C_{\init} (1 + C'\,(t-t_0))\,G_\alpha\big(v_\alpha(x,t_0)\big),\\
\label{eq-kappa-15}
\chi_\alpha(x,t) &\leq C_0 C_{\init} (1 + C'(t-t_0))\, H_\alpha\big(v_\alpha(x,t_0)\big), \\
\label{eq-rho-15}
\rho(x,t) &\leq C_0C_{\init}\,(1+C'(t-t_0)).
\end{align}
\end{subequations}
\end{prop}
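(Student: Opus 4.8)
The plan is to deploy Lemma~\ref{lem:Tim-17} three times --- once for each of $\gamma_\alpha$, $\chi_\alpha$, and $\rho$ --- with carefully chosen comparison functions $V$ and a common control function $W$. The first observation is that the uniform curvature bound $|\Rm|\le C_1$ on $[t_0,t_1]$, together with the relation~\eqref{eq:curv-near-cyl} (or more directly the identities in Appendix~\ref{multiply-warped}), controls the quantity $L$ appearing in Lemma~\ref{lemma-ev-eq} by a constant depending only on $C_1$ and the dimensions: indeed $\rho\le C_N C_1^2$, and the terms $\gamma_\beta/u_\beta^2$, $\chi_\beta^{1/2}/u_\beta$ are pieces of $|\Rm|$ modulo $\rho$, so $L\le C_1' = C_1'(C_1,\vec N)$ on all of $[t_0,t_1]$. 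Hence I would set $W\equiv C_1'$ (or $W \equiv \max\{C_1', \text{something}\}$, a constant), which trivially satisfies the last three hypotheses of Lemma~\ref{lem:Tim-17} since a positive constant has vanishing heat operator and gradient; in particular $W\le C$ and $|\heat W|+|\cv W|^2 = 0 \le CW$.

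Next I would run the three applications in the order $\rho$, then $\gamma_\alpha$, then $\chi_\alpha$, because the comparison function for each builds on the bounds already obtained. For $\rho$: take $U=\rho$ and $V\equiv 1$; the evolution inequality~\eqref{eq-rho} reads $\heat\rho \le -|\cv\rho|^2/\rho + C_N L^3 \le -|\cv\rho|^2/\rho + C_N (C_1')^3$, which has exactly the form $\heat U \le C(UW+VW) - c|\cv U|^2/U$ with $c=1$ and $C$ chosen from $C_N(C_1')^3/C_1'$ and the constant bound on $V$. The second conclusion of Lemma~\ref{lem:Tim-17} then gives $\rho(x,t) \le \sup_y \rho(y,t_0) + 2\lambda(t-t_0)W(1+\sup_y\rho(y,t_0))$, and since $\rho(\cdot,t_0)\le C_0 C_\init$, this yields~\eqref{eq-rho-15} after renaming constants. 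The polynomial-growth condition $U\le C'e^{C'd^2}V$ needed to invoke the maximum principle holds because on $[t_0,t_1]$ the curvature, and hence $\rho$, is globally bounded by $C_1$-dependent constants, so $\rho/V=\rho$ is bounded.

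For $\gamma_\alpha$: take $U=\gamma_\alpha$ and $V = G_\alpha(v_\alpha(x,t))$. The delicate point, and the place I expect the real work, is verifying the hypothesis $|\heat V|/V + |\cv V|^2/V^2 \le CW$ for this $V$. Using the chain-rule/composition inequality $\parexp{\vp\circ\psi}\le\polyd{\vp}^2\parexp{\psi}$ from~\eqref{eq:easycalc}, this reduces to controlling $\parexp{v_\alpha} = |\heat v_\alpha|/v_\alpha + |\cv v_\alpha|^2/v_\alpha^2$, since $\polyd{G_\alpha}\le \|G_\alpha\|_{\mc G}\le C_\init$. Now $|\cv v_\alpha|^2/v_\alpha^2 = \gamma_\alpha/v_\alpha^2$, which at time $t_0$ is bounded by $C_0C_\init G_\alpha(v_\alpha)/v_\alpha^2 \le C_0 C_\init \|G_\alpha\|_{\mc G} \le C_0 C_\init^2$ using the defining property $G(s)/s^2 \le \|G\|_{\mc G}$ of $\mc G$; and $\heat v_\alpha = -\mu_\alpha - u_\alpha^{-1}|\cv u_\alpha|^2 = -\mu_\alpha - u_\alpha^{-1}\gamma_\alpha$ from~\eqref{eq:evolve-fiber}, whose absolute value over $v_\alpha$ is likewise controlled by $C_1$-dependent constants (noting $u_\alpha\ge v_\alpha$ and $u_\alpha^{-1}\gamma_\alpha$ is a curvature-type term). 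The subtlety is that these bounds are only known \emph{a priori} at $t_0$, whereas Lemma~\ref{lem:Tim-17} needs the bound on $V$ throughout $[t_0,T']$; I would resolve this either by a short bootstrap/continuity argument shrinking $T'$, or by observing that $\parexp{v_\alpha}$ is itself controlled by $L$ plus lower-order terms uniformly on $[t_0,t_1]$ via the curvature bound. Granting this, inequality~\eqref{eq-nabla-v} reads $\heat\gamma_\alpha \le -\tfrac12|\cv\gamma_\alpha|^2/\gamma_\alpha + 6(\gamma_\alpha/u_\alpha^2)\gamma_\alpha \le -\tfrac12|\cv\gamma_\alpha|^2/\gamma_\alpha + C_1'' U$, which fits the template with $c=1/2$; the conclusion of Lemma~\ref{lem:Tim-17}, combined with $\gamma_\alpha(\cdot,t_0)\le C_0C_\init G_\alpha(v_\alpha(\cdot,t_0)) = C_0 C_\init V(\cdot,t_0)$ so that $\sup_y U(y,t_0)/V(y,t_0)\le C_0C_\init$, gives
\[
\frac{\gamma_\alpha(x,t)}{G_\alpha(v_\alpha(x,t))} \le C_0 C_\init + 2\lambda(t-t_0)W\big(1+C_0C_\init\big),
\]
which is~\eqref{eq-grad-15} after absorbing constants --- here I also use that $V(x,t)=G_\alpha(v_\alpha(x,t))$ is comparable to $G_\alpha(v_\alpha(x,t_0))$, which requires~\eqref{eq-v-lower-15} and the monotonicity/growth properties of $G_\alpha\in\mc G$ (a one-sided bound $v_\alpha(x,t)\le v_\alpha(x,t_0)$ is immediate from $\heat v_\alpha \le 0$ plus the maximum principle, the lower bound from an ODE comparison $\partial_t v_\alpha \ge -C'v_\alpha$-type estimate).

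Finally, for $\chi_\alpha$: take $U=\chi_\alpha$ and $V=H_\alpha(v_\alpha(x,t))$, where $H_\alpha$ is the function from~\eqref{eq-H}, and note $H_\alpha\in\mc G$ with $\|H_\alpha\|_{\mc G}$ bounded in terms of the $\|G_\beta\|_{\mc G}\le C_\init$, so the hypothesis on $V$ is handled exactly as for $\gamma_\alpha$. The evolution inequality~\eqref{kappa-evolution}, $\heat\chi_\alpha \le -\tfrac12|\cv\chi_\alpha|^2/\chi_\alpha + C_N L\chi_\alpha + C_N L\sum_\beta(\gamma_\beta/u_\beta^2)\gamma_\alpha$, has a cross term that is not obviously of the form $C(UW+VW)$; here I would use the bound on $\gamma_\beta$ just established (i.e.~\eqref{eq-grad-15}) together with $u_\beta\ge v_\beta$ and the defining inequality $G_\beta(s)/s^2\le\|G_\beta\|_{\mc G}$ to see that $\sum_\beta (\gamma_\beta/u_\beta^2)\gamma_\alpha \le C(C_0,C_\init,C_1) H_\alpha(v_\alpha) = C\cdot V$, precisely the point of the definition~\eqref{eq-H} of $H_\alpha$ --- this is exactly why $H_\alpha$ was built to package $(\sum_\beta G_\beta/s_\beta^2)G_\alpha$. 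Thus the inequality becomes $\heat\chi_\alpha \le -\tfrac12|\cv\chi_\alpha|^2/\chi_\alpha + C(UW + VW)$, Lemma~\ref{lem:Tim-17} applies with $c=1/2$, and with $\chi_\alpha(\cdot,t_0)\le C_0C_\init H_\alpha(v_\alpha(\cdot,t_0)) = C_0 C_\init V(\cdot,t_0)$ we obtain~\eqref{eq-kappa-15} after the same constant bookkeeping. At the end I would take $T'$ to be the minimum of the three $T'$'s produced by the three applications of Lemma~\ref{lem:Tim-17} (each depending only on $c$, $C$, which in turn depend only on $C_\init,C_0,C_1,\vec N$), and similarly take $C'$ to be the maximum of the constants that appear. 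The main obstacle, to summarize, is not any single estimate but the circularity in controlling the comparison functions $V=G_\alpha(v_\alpha(\cdot,t))$ and $V=H_\alpha(v_\alpha(\cdot,t))$ uniformly on the full interval before the bounds are proven; the clean way out is to exploit the \emph{a priori} uniform curvature bound $|\Rm|\le C_1$ to control all of $\rho$, $\gamma_\beta/u_\beta^2$, $\chi_\beta/u_\beta^2$, and hence $L$ and $\parexp{v_\alpha}$, by constants on all of $[t_0,t_1]$ from the outset, so that every hypothesis of Lemma~\ref{lem:Tim-17} is met with $W$ a genuine constant.
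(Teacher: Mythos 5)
There is a genuine gap, and it sits exactly at the point you flag as ``the main obstacle.'' You take the comparison functions to be $V=G_\alpha(v_\alpha(x,t))$ and $V=H_\alpha(v_\alpha(x,t))$, evaluated at the running time $t$, and you must then verify $\parexp{V}\le CW$ on all of $[t_0,T']$. Via the composition inequality this reduces to bounding $\parexp{v_\alpha(\cdot,t)}$, whose gradient part is $|\cv v_\alpha|^2/v_\alpha^2=\gamma_\alpha/v_\alpha^2$. Your second proposed escape --- that this is ``controlled by $L$ plus lower-order terms via the curvature bound'' --- is false: the curvature bound $|\Rm|\le C_1$ controls $\gamma_\alpha/u_\alpha^2$ (this is \eqref{eq-gamma-est1}), but $u_\alpha=(a_\alpha-\mu_\alpha t)+v_\alpha$ can be far larger than $v_\alpha$ (indeed $v_\alpha\to0$ at spatial infinity), so $\gamma_\alpha/v_\alpha^2$ is not a curvature-type quantity; bounding it by a constant is equivalent, up to $G_\alpha(s)/s^2\le\|G_\alpha\|_{\mc G}$, to the estimate \eqref{eq-grad-15} you are trying to prove. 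This is a circularity, not a technicality. The paper's way out is to freeze the comparison functions at the initial time, $V=G_\alpha(v_\alpha(x,t_0))$ and $V=H_\alpha(v_\alpha(x,t_0))$ --- note that the conclusions \eqref{eq-grad-15}--\eqref{eq-kappa-15} are stated with $v_\alpha(x,t_0)$ on the right precisely so that no conversion to time $t$ is needed inside this proposition (that conversion is Proposition~\ref{cor-improvement}, a separate step that consumes the output of this one). For a time-independent $V$ the hypothesis $\parexp{V}\le CW$ is verified at $t_0$ directly from \eqref{eq-assump}, and propagated to $[t_0,T']$ by controlling $\partial_t g$ and $\partial_t\Gamma$, which requires a bound on $|\cv\Rm|$ (Lemma~\ref{lemma-gradrm-bnd}, via Shi-type estimates or the Main Assumptions) that your outline omits entirely.

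Two further problems. First, your route back from $G_\alpha(v_\alpha(x,t))$ to $G_\alpha(v_\alpha(x,t_0))$ rests on the claim that $v_\alpha(x,t)\le v_\alpha(x,t_0)$ ``is immediate from $\heat v_\alpha\le0$ plus the maximum principle''; the maximum principle gives $\sup_x v_\alpha(x,t)\le\sup_x v_\alpha(x,t_0)$, not a pointwise comparison, and the pointwise two-sided equivalence of $v_\alpha(x,t)$ with $v_\alpha(x,t_0)$ is itself a downstream consequence of the full set of estimates (Propositions~\ref{cor-improvement} and~\ref{uniform}). Second, the conclusion \eqref{eq-v-lower-15} --- and with it the positivity of $v_\alpha$ for $t>t_0$, without which $G_\alpha(v_\alpha(x,t))$ is not even defined --- is dispatched in your outline as ``an ODE comparison,'' but since $\heat v_\alpha=-\gamma_\alpha/u_\alpha\le0$ and $\inf v_\alpha=0$ is not attained on the noncompact base, preserving nonnegativity is genuinely delicate; the paper spends its longest lemma (Lemma~\ref{lemma-v-lower}) on exactly this, via the regularized quantities $(v_\alpha+\epsilon)^{-1}$ and an iterated improvement. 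Your treatment of $\rho$ ($U=\rho$, $V\equiv1$, $W$ constant) does match the paper and is fine, as is the observation that $L\le C'(C_1)$; but the $\gamma_\alpha$ and $\chi_\alpha$ arguments need to be rebuilt around initial-time comparison functions, and the $v_\alpha$ lower bound supplied, before the proof closes.
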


Because its proof is lengthy, we prove Proposition~\ref{lem:Tim-15} in a series of steps that are contained in
Lemmas~\ref{lemma-gradrm-bnd}--\ref{lemma-rho}.
In the course of the proof, we use the same symbols $C'$ and $T'$ for possibly different constants that depend only on $C_\init$, $C_0$, and $C_1$ --- with $C'$ allowed to grow but remain finite, and $T'$ allowed to shrink but remain positive.
\smallskip

Our first observation is needed because to prove our Main Theorem,
we need to apply Lemma~\ref{lem:Tim-17} in cases
where $V$ may be independent of time, but $\Delta V$ and $|\cv V|^2$ are computed with respect to $g(t)$.

\begin{lemma}\label{lemma-gradrm-bnd}
  Suppose that the assumptions of Proposition \ref{lem:Tim-15} hold.

  Then there are a constant $C'(C_\init, C_0, C_1)$ and time $T'(C_\init, C_0, C_1) \in (t_0, t_1]$
  such that on $[t_0, T']$, we have
  \begin{align*}
    |\nabla \Rm[g(t)]|_{g(t)} &\leq C', \\
    \frac{|\nabla v_\alpha(x, t_0)|^2_{g(t)}}{v_\alpha(x, t_0)^2} &\leq C', \\
    \frac{|\Delta_{g(t)} v_\alpha(x, t_0)|_{g(t)}}{v_\alpha(x, t_0)} &\leq C'.
  \end{align*}
  Note that the final two collections of inequalities can be summarized as
  \begin{align}
    \parexp{v_\alpha(x, t_0)} \leq C'. \label{eq:exp-bnd-v}
  \end{align}
\end{lemma}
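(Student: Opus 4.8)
\emph{Setup and strategy.} The plan is to prove the three families of bounds by two distinct mechanisms. The estimate on $|\nabla\Rm|$ comes from the noncompact maximum principle applied to the evolution of $|\nabla\Rm|^2$ under the assumed uniform curvature bound $|\Rm|\le C_1$. The two estimates on the \emph{frozen} function $v_\alpha(\cdot,t_0)$ are perturbative: they hold at $t=t_0$ because of the hypotheses~\eqref{eq-assump}, and they persist on a short later interval because the metric $g(t)$ and its Levi--Civita connection drift slowly. The Laplacian estimate in~\eqref{eq:exp-bnd-v} is the one that genuinely requires the $|\nabla\Rm|$ bound, which is why all three conclusions are bundled together. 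Throughout, $C_N$ denotes a constant depending only on the dimension data $(n,n_\alpha)$.

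\emph{The curvature derivative bound.} The metric $g(\cdot,t_0)$ has $|\nabla\Rm|$ bounded uniformly in space: this is part of the Main Assumptions when $t_0=0$, and for $t_0>0$ it follows from Shi's local derivative estimates, since $g(t)$ is smooth with bounded curvature on a left-neighborhood of $t_0$ within its interval of existence. Under Ricci flow $f:=|\nabla\Rm|^2$ satisfies $\heat f\le-2|\nabla^2\Rm|^2+C_N|\Rm|\,f\le C_NC_1 f$ on $[t_0,t_1]$, so $e^{-C_NC_1 t}f$ is a subsolution of the heat equation. Since $f(\cdot,t_0)$ is bounded and $|\Rm|\le C_1$, the noncompact weak maximum principle --- in the form used to prove Lemma~\ref{lem:Tim-17}, e.g.\ Theorem~12.22 of~\cite{vol-144} --- yields $\sup_{\mc M}f(\cdot,t)\le e^{C_NC_1(t-t_0)}\sup_{\mc M}f(\cdot,t_0)$, whence $|\nabla\Rm[g(t)]|_{g(t)}\le C'$ on $[t_0,t_1]$.

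\emph{The bounds on $v_\alpha(\cdot,t_0)$.} At $t=t_0$ the gradient term is immediate from~\eqref{eq-assump} and $\sup_s G_\alpha(s)/s^2\le\|G_\alpha\|_{\mc G}\le C_\init$, since $|\nabla v_\alpha(\cdot,t_0)|^2_{g(t_0)}/v_\alpha(\cdot,t_0)^2=\gamma_\alpha(\cdot,t_0)/v_\alpha(\cdot,t_0)^2\le C_0C_\init^2$. For the Laplacian ($\Delta=\Delta_{\mc M}$), I use~\eqref{eq:compare-Laplacians} with $\nabla u_\beta=\nabla v_\beta$ to write $\Delta_{\mc M}v_\alpha=\Delta_{\mc B}v_\alpha+\tfrac12\sum_\beta n_\beta u_\beta^{-1}\lp\nabla v_\beta,\nabla v_\alpha\rp$ and estimate $|\Delta_{\mc B}v_\alpha|\le\sqrt n\,\chi_\alpha^{1/2}$, $|\lp\nabla v_\beta,\nabla v_\alpha\rp|\le\gamma_\beta^{1/2}\gamma_\alpha^{1/2}$; dividing by $v_\alpha(\cdot,t_0)$ and applying~\eqref{eq-assump}, the pointwise consequences $G_\alpha(s)\le C_\init s^2$ and $H_\alpha(s)\le AC_\init^2 s^2$ of the $\mc G$-bounds, and the elementary inequality $u_\beta(\cdot,t_0)\ge v_\beta(\cdot,t_0)$ (equivalently $a_\beta-\mu_\beta t_0\ge0$, which holds in the setting of Theorem~\ref{main}), one gets $|\Delta_{\mc M}v_\alpha(\cdot,t_0)|/v_\alpha(\cdot,t_0)\le C'$; the same bookkeeping, using the Hessian computations of Appendix~\ref{multiply-warped} for the $\mc M$-Hessian of a function on $\mc B$, also yields $|\nabla^2v_\alpha(\cdot,t_0)|_{g(t_0)}\le C'v_\alpha(\cdot,t_0)$. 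To propagate to $t\in(t_0,T']$, note that $\partial_tg=-2\Rc$ with $|\Rc|_{g(t)}\le C_N|\Rm|\le C_NC_1$ gives $e^{-C_NC_1(t-t_0)}g(t_0)\le g(t)\le e^{C_NC_1(t-t_0)}g(t_0)$, so the $g(t)$-length of the fixed covector $\nabla v_\alpha(\cdot,t_0)$ stays within a bounded factor of its $g(t_0)$-length, which gives the gradient part of~\eqref{eq:exp-bnd-v} on $[t_0,T']$. For the Laplacian, the Hessian of the frozen function changes only through the connection, $\nabla^2_{g(t)}v_\alpha(\cdot,t_0)=\nabla^2_{g(t_0)}v_\alpha(\cdot,t_0)-\big(\Gamma[g(t)]-\Gamma[g(t_0)]\big)\!\left(\nabla v_\alpha(\cdot,t_0),\cdot\right)$, and since $\partial_s\Gamma$ is a contraction of $g^{-1}$ with $\nabla\Rc$, the curvature derivative bound above gives $\big|\Gamma[g(t)]-\Gamma[g(t_0)]\big|_{g(t_0)}\le C_N\int_{t_0}^t|\nabla\Rm|\,ds\le C'(t-t_0)$. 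Combining this with $|\nabla v_\alpha(\cdot,t_0)|_{g(t_0)}\le\sqrt{C_0}\,C_\init\,v_\alpha(\cdot,t_0)$, the $t_0$-bound on $|\nabla^2_{g(t_0)}v_\alpha(\cdot,t_0)|_{g(t_0)}$, and the metric comparison in taking the trace, one obtains $|\Delta_{g(t)}v_\alpha(\cdot,t_0)|/v_\alpha(\cdot,t_0)\le C'\big(1+C'(t-t_0)\big)$, which is $\le C'$ once $T'$ is taken close enough to $t_0$. Together with the previous paragraph, this establishes all three bounds on $[t_0,T']$.

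\emph{Main obstacle.} I expect the propagation of the Laplacian bound to be the crux: because $v_\alpha(\cdot,t_0)$ is held fixed while the metric evolves, $\Delta_{g(t)}v_\alpha(\cdot,t_0)$ is not controlled by any conserved quantity, and making the connection-drift estimate precise is exactly what forces the $|\nabla\Rm|$ bound to be proved first and folded into this lemma. One must also take care to verify the hypotheses of the noncompact maximum principle for $|\nabla\Rm|^2$ --- the spatial boundedness of $f$ at $t_0$ and of $|\Rm|$ on $[t_0,t_1]$ supply the required growth control --- and to keep every constant dependent only on $n$, $n_\alpha$, $C_\init$, $C_0$, and $C_1$.
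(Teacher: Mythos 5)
Your proof is correct and takes essentially the same route as the paper, which disposes of the lemma in three sentences: the $|\nabla\Rm|$ bound at $t_0$ (from the Main Assumptions when $t_0=0$, from interior regularity when $t_0>0$) is propagated forward by Shi-type global derivative estimates (Theorem~14.16 of~\cite{vol-144}, which is what your maximum-principle argument for $|\nabla\Rm|^2$ amounts to), and the bounds on the frozen function $v_\alpha(\cdot,t_0)$ hold at $t_0$ by the hypotheses and persist because $\partial_t g$ and $\partial_t\Gamma$ are controlled by $|\Rm|$ and $|\nabla\Rm|$. You have simply filled in the details the paper leaves implicit, and your identification of the connection-drift estimate as the reason the $|\nabla\Rm|$ bound is bundled into this lemma matches the paper's logic exactly.
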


\begin{proof}
If $t_0 > 0$, our assumed bound on $|\Rm|$ at time $t = t_0$ and regularity theory for Ricci flow imply the
stated bound for $|\cv \Rm|$. If $t_0=0$, we note that the Main Assumptions outlined in Section~\ref{sec:assumptions}
include an upper bound  for  $|\nabla \Rm|$ at time $t=0$. Then Theorem~14.16 of~\cite{vol-144}) lets
us bound $|\cv \Rm|$ on $[t_0,T']$.

 The subsequent inequalities follow because they hold at time $t_0$, and because $\partial_t g$
  and $\partial_t \Gamma$ are controlled by our bounds on $|\Rm|$ and $|\nabla \Rm|$.
  \end{proof}

\begin{lemma}
\label{lemma-v-lower}
Suppose that the assumptions of Proposition \ref{lem:Tim-15} hold.

Then there exist a constant $C'(C_\init, C_0, C_1)$ and time $T'(C_\init, C_0, C_1) \in (t_0, t_1]$ so that for all $t\in[t_0, T']$, estimate \eqref{eq-v-lower-15}  holds.
\end{lemma}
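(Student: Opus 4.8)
The plan is to apply the maximum-principle machinery of Lemma~\ref{lem:Tim-17} with the roles $U = V = v_\alpha$ and $W$ a large constant. First I would recall from \eqref{eq:evolve-fiber} that $(\partial_t - \Delta)v_\alpha = (\partial_t - \Delta)u_\alpha = -\mu_\alpha - u_\alpha^{-1}|\cv u_\alpha|^2 \le |\mu_\alpha|$, since $u_\alpha^{-1}|\cv u_\alpha|^2 \ge 0$; and $|\cv u_\alpha| = |\cv v_\alpha|$ because $u_\alpha$ and $v_\alpha$ differ by the function $a_\alpha - \mu_\alpha t$, which is constant in space. So $\heat v_\alpha \le |\mu_\alpha|$. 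To put this in the form required by Lemma~\ref{lem:Tim-17} I need an upper bound $\heat v_\alpha \le C(v_\alpha W + v_\alpha W)$ for a suitable constant $W$; this is where I use the uniform curvature bound $|\Rm|\le C_1$ on $[t_0,t_1]$ together with the bound on $u_\alpha$ from below. Indeed, since $v_\alpha(x,t_0)>0$ and $v_\alpha \ge 0$ is not \emph{a priori} available away from $t_0$, I should instead argue more directly: the quantity $\heat v_\alpha \le |\mu_\alpha|$ is an additive, not multiplicative, bound, so the cleanest route is to compare $v_\alpha$ with the spatially-constant subsolution.

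Concretely, here is the argument I would actually run. Fix $x$ and consider $w(x,t) := v_\alpha(x,t_0)/(1 + C'(t-t_0))$ for a constant $C'$ to be chosen. A direct computation gives $\partial_t w = -C' w^2 / v_\alpha(x,t_0) = -C' w/(1+C'(t-t_0))$, while $\Delta w$ at fixed $t$ equals $(1+C'(t-t_0))^{-1}\Delta_{g(t)} v_\alpha(x,t_0)$, which by Lemma~\ref{lemma-gradrm-bnd} (specifically \eqref{eq:exp-bnd-v}, giving $|\Delta_{g(t)} v_\alpha(\cdot,t_0)| \le C' v_\alpha(\cdot,t_0)$) is bounded in absolute value by $C' w$. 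Hence $\heat w \le -C' w/(1+C'(t-t_0)) + C' w \le 2C' w$ on a short time interval, whereas $\heat v_\alpha \le |\mu_\alpha| \le C' v_\alpha$ is false in general since $v_\alpha$ may vanish — so instead I observe $\heat(v_\alpha - w) \ge -|\mu_\alpha| - 2C'w \ge -C''(v_\alpha - w) - C'''$... this additive term is the obstruction. The honest fix: apply Lemma~\ref{lem:Tim-17} with $U = w$, $V = v_\alpha(\cdot,t_0)$ (time-independent!), and $W$ a constant. Then $\heat U = \heat w$ satisfies the required bound by the computation above with $c$ and $C$ absorbed, $U/V = (1+C'(t-t_0))^{-1} \le 1$ so the exponential-growth hypothesis is trivially met, and the middle two hypotheses on $V$ and $W$ hold by Lemma~\ref{lemma-gradrm-bnd} and because $W$ is constant. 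But this bounds $w$ from above, which is the wrong direction.

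So the correct application is the reverse: I want a \emph{lower} barrier for $v_\alpha$. I would therefore apply Lemma~\ref{lem:Tim-17}-type reasoning to $U = v_\alpha(\cdot,t_0)$ and $V = v_\alpha(\cdot,t)$, i.e.\ estimate $X = v_\alpha(x,t_0)/v_\alpha(x,t)$ from above, which gives a lower bound on $v_\alpha(x,t)$. Checking hypotheses: $\heat U = \Delta_{g(t)} v_\alpha(\cdot,t_0)$ (no $\partial_t$ since $U$ is time-independent), bounded by $C' U = C' U \cdot 1$ using \eqref{eq:exp-bnd-v} and taking $W \equiv$ const; $\heat V = \heat v_\alpha \le |\mu_\alpha|$, and $|\cv V|^2/V^2 = \gamma_\alpha/v_\alpha^2 \le C'$ by the hypotheses \eqref{eq-assump} of Proposition~\ref{lem:Tim-15} combined with $G_\alpha(s)/s^2 \le \|G_\alpha\|_{\mc G} \le C_\init$ — but I need this for $t > t_0$, which is exactly what I'm trying to prove, so there is a bootstrapping issue. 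This is the \textbf{main obstacle}: the gradient bound on $v_\alpha$ is only assumed at $t_0$. I resolve it by running a short-time continuity (open–closed) argument: the set of $t \in [t_0, t_1]$ on which $\gamma_\alpha(x,t) \le 2C_0 C_\init G_\alpha(v_\alpha(x,t))$ and $v_\alpha(x,t) \ge \tfrac12 v_\alpha(x,t_0)$ hold is nonempty (contains $t_0$), closed by continuity, and open because on it the hypotheses of Lemma~\ref{lem:Tim-17} are verified (with $W$ a constant depending on $C_\init, C_0, C_1$ via Lemmas~\ref{lemma-gradrm-bnd} and the curvature bound), and the conclusion of that lemma then strictly improves both estimates on a slightly larger interval. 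The exponential-bound hypothesis $U \le C' e^{C' d_{g(t)}^2} V$ needed to invoke the noncompact maximum principle follows because $U/V = v_\alpha(x,t_0)/v_\alpha(x,t)$ and $|\cv \log v_\alpha(\cdot,t_0)|^2 \le C'$ forces $v_\alpha(\cdot,t_0)$ to decay at most exponentially, while the lower barrier controls $v_\alpha(\cdot,t)$ from below — combining these, $U/V$ grows at most exponentially in $d_{g(t)}$. Choosing $T'$ small enough (depending on $C_\init, C_0, C_1$) that $\lambda t W < \tfrac12$ throughout, the quantitative conclusion of Lemma~\ref{lem:Tim-17} yields $v_\alpha(x,t_0)/v_\alpha(x,t) \le 1 + C'(t-t_0)$, which is precisely \eqref{eq-v-lower-15}.
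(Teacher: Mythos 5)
Your final strategy --- bounding $X = v_\alpha(x,t_0)/v_\alpha(x,t)$ from above via Lemma~\ref{lem:Tim-17} with $V = v_\alpha(\cdot,t)$ --- is genuinely different from the paper's, and it has a gap. The paper never divides by $v_\alpha(\cdot,t)$ at positive times. Instead it sets $v_{\alpha,\epsilon} := (v_\alpha + \epsilon)^{-1}$ and computes from \eqref{eq-v} that
$\heat v_{\alpha,\epsilon} = |\cv\log v_{\alpha,\epsilon}|^2\, v_{\alpha,\epsilon}\bigl(\tfrac{v_\alpha+\epsilon}{u_\alpha}-2\bigr) \le -|\cv v_{\alpha,\epsilon}|^2/v_{\alpha,\epsilon}$ once $\epsilon < a_\alpha - \mu_\alpha t_1$. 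It then applies Lemma~\ref{lem:Tim-17} with $U = v_{\alpha,\epsilon}(\cdot,t)$ and $V = v_{\alpha,\epsilon}(\cdot,t_0)$ --- a \emph{time-independent} comparison function, so the hypothesis on $V$ follows from Lemma~\ref{lemma-gradrm-bnd} alone --- first to establish $v_\alpha \ge 0$ (choosing $\epsilon$ just above the coarse lower bound $C_1(\tau-t_0)$ and iterating to drive the negative part to zero), and then lets $\epsilon\searrow 0$ to obtain \eqref{eq-v-lower-15}. The reciprocal converts the desired lower bound into an upper bound of exactly the form Lemma~\ref{lem:Tim-17} delivers, and every hypothesis is checked using only time-$t_0$ data plus the curvature bound, so no bootstrapping is required.

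The gap in your version is in the openness step. To verify $\frac{|\heat V|}{V} + \frac{|\cv V|^2}{V^2} \le CW$ for $V = v_\alpha(\cdot,t)$ you need $\gamma_\alpha(x,t)/v_\alpha(x,t)^2 \le C'$ for $t>t_0$, and you correctly identify that this is circular; but your open--closed argument does not close as described. The single application of Lemma~\ref{lem:Tim-17} to $(U,V)=(v_\alpha(\cdot,t_0),\,v_\alpha(\cdot,t))$ improves only the lower bound on $v_\alpha$; it does nothing to improve the other defining condition of your bootstrap set, $\gamma_\alpha(x,t)\le 2C_0C_\init G_\alpha(v_\alpha(x,t))$, so the asserted ``strict improvement of both estimates'' is unjustified. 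Improving the gradient condition requires a separate maximum-principle argument for $\gamma_\alpha$ (the content of Lemma~\ref{lemma-gradient}, which in the paper needs only $\gamma_\alpha/u_\alpha^2\le C'$ from the curvature bound and hence no bootstrap) together with the conversion from $G_\alpha(v_\alpha(\cdot,t_0))$ to $G_\alpha(v_\alpha(\cdot,t))$ (Proposition~\ref{cor-improvement}, which itself consumes \eqref{eq-v-lower-15}); none of this is carried out in your sketch. Moreover, the positivity of $v_\alpha(\cdot,t)$ --- needed merely for your ratio $X$ to be defined, and not automatic on a noncompact manifold --- is secured only if the bootstrap closes. The paper's $\epsilon$-regularization sidesteps all of these issues at once; if you want to salvage your route, you must run the gradient estimate of Lemma~\ref{lemma-gradient} \emph{inside} the bootstrap and supply the improvement step explicitly.
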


\begin{proof}
First we claim there exists a $T'$ so that $v_\alpha\geq 0$ on $\mc M\times[t_0,T']$.

\smallskip

The Ricci flow equation restricted to the metric on $\mc{F}_{\alpha}$ is
\[\partial_t (u_{\alpha} g_{\alpha}) = -2\Rc\big|_{\alpha},\]
where $\Rc\big|_{\alpha}$ denotes the Ricci curvature of planes tangent  to $\mc{F}_{\alpha}$. Using
the fact that $g_{\alpha}$ is independent of time, we can rewrite this as
\[\partial_t(\log u_{\alpha}) u_{\alpha} g_{\alpha} = -2\Rc\big|_{\alpha}.\]
Since $|\Rm[g(t)]| \le C_1$ on $[t_0,t_1]$, we get a comparable bound for $|\Rc|$, implying that
\[\big|\partial_t \log u_{\alpha}\big| \le C',\]
where $C' = C'(C_1)$. Hence for all $t\in [t_0,t_1]$, we have
\begin{equation}
\label{eq-lower-u}
u_{\alpha}(x,t) \ge e^{-C'(t-t_0)}\, (a_{\alpha} - \mu_{\alpha} t_0).
\end{equation}

To prove the claim that $v$ remains nonnegative for a short time,
we first show that given any $\delta>0$, we have $v_\alpha\geq-\delta t$ on  a time interval $[t_0,T']$,
where $T'$ could possibly decrease in the proof but is independent of $\delta$. 

Equation~\eqref{eq-lower-u} implies that 
$
u_\alpha
\ge (a_{\alpha} - \mu_{\alpha} t_0)\, \left(1 - C'(t - t_0)\right)
$,
so 
\begin{equation}
\label{eq:coarse-v-estimate}
v_{\alpha}(x,t) = u_\alpha - (a_\alpha - \mu_\alpha t) \ge - C'\,(t - t_0).
\end{equation}
We fix $T'$ so that $T' - t_0 \le (a-\mu_{\alpha} t_1)/(2C_1)$ and
let $\tau \in [t_0,T']$ be arbitrary. Because $C_1 (\tau - t_0)\leq C_1(T' - t_0) \le \tfrac12(a_{\alpha}-\mu t_1)$, we may let
\begin{equation}\label{eq:epsilon-lower-bound}
 \epsilon\in\big(C_1(\tau - t_0),\,a_{\alpha}-\mu_{\alpha} t_1\big)
\end{equation}
 be arbitrary. Then $v_\alpha+\epsilon>0$ on $[t_0,\tau]$, so each function
 $v_{\alpha,\epsilon}:=(v_\alpha+\epsilon)^{-1}$ is well defined
on that time interval. Using that $v_{\alpha}(x,t)$ evolves by
\begin{equation}	\label{eq-v}
\heat  v_{\alpha} =  - \frac{\gamma_{\alpha}}{u_{\alpha}},
\end{equation}
a straightforward computation yields
\[
\heat v_{\alpha,\epsilon} = |\nabla\log v_{\alpha,\epsilon}|^2 v_{\alpha,\epsilon} \, \left(\frac{v_{\alpha}+\epsilon}{u_{\alpha}} - 2\right).
\]
Our choice of $\epsilon$ implies that
\[
  \frac{v_{\alpha}+\epsilon}{u_{\alpha}}
  =   \frac{v_{\alpha}+\epsilon}{v_{\alpha} + a_{\alpha} - \mu_{\alpha} t}
  \le \frac{v_{\alpha} + \epsilon}{v_{\alpha} + a_{\alpha} - \mu_{\alpha} t_1}
  \le 1
\]
for all $t \in [t_0,T']$ and thus that
\[
\heat v_{\alpha,\epsilon} \le - |\nabla \log v_{\alpha,\epsilon}|^2 v_{\alpha,\epsilon} = 
 - \frac{|\nabla v_{\alpha,\epsilon}|^2}{ v_{\alpha,\epsilon}}  .
\]
Let $U(x,t) = v_{\alpha,\epsilon}(x,t)$, $V(x,t) = v_{\alpha,\epsilon}(x,t_0)$, and $W = C'$. Observing that $V$ is independent of time
and using Lemma \ref{lemma-gradrm-bnd},  one sees that
\begin{equation}	\label{eq:exp-bnd-v-eps}
\begin{split}
\frac{\left|\heat V\right|}{V} + \frac{|\nabla V|^2}{V^2} &\le
\frac{|\Delta_{g(t)} v_{\alpha}(x,t_0)|}{v_{\alpha}(x,t_0) + \epsilon} + \frac{|\nabla v_{\alpha}(x,t_0)|^2_{g(t)}}{(v_{\alpha}(x,t_0) + \epsilon)^2} \\
&\le C'.
\end{split}
\end{equation}
Note in particular that~\eqref{eq:exp-bnd-v-eps} is independent of $\epsilon$. For a sufficiently short time,
$v_{\alpha,\epsilon} \le (\epsilon - C'(\tau-t_0))^{-1} < \infty$. So  $U$ is bounded in space, and the bound \eqref{eq:exp-bnd-v-eps} implies $|\nabla \log V|$ is bounded, so $V$ decays at most exponentially.  Thus, Lemma~\ref{lem:Tim-17} can be applied to $U, V$, and $W$  as defined above to conclude that
\[
v_{\alpha,\epsilon}(x,t) \le  (1 + C'\,(t - t_0))\, v_{\alpha,\epsilon}(x,t_0),\qquad t\in[t_0,\tau],
\]
where $C' = C'(C_\init, C_0,C_1)$ is independent of $\epsilon$.
Letting $\epsilon\searrow C_1(\tau - t_0)$, which is the lower bound imposed by \eqref{eq:epsilon-lower-bound}, we find that
\[
v_\alpha+C_1(\tau - t_0)\geq\frac{C_1(\tau-t_0)}{1+C'\,(\tau-t_0)}\geq\frac{C_1(\tau- t_0)}{1 + C'\,(t_1-t_0)}.
\]
Because $\tau\in[t_0,T']$ is arbitrary, this implies that 
\[
v_\alpha\geq-\left(1-\frac{1}{1+C'(t_1-t_0)}\right)C_1\,(t - t_0)
\]
 for all $t\in[t_0,T']$, which improves~\eqref{eq:coarse-v-estimate} by a fixed factor. Repeating this
bootstrap  argument $k$ times (which can be done without changing $T'$), where
\[
\left(1-\frac{1}{1+C'(t_1-t_0)}\right)^k C_1\leq\delta,
\]
 proves that $v_\alpha\geq-\delta (t-t_0)$ on $[t_0,T']$. Because $\delta>0$ is arbitrary and $T'$ is independent of $\delta$, it follows
 that $v_{\alpha}\geq0$ on $[t_0,T']$, as claimed.

\smallskip

We next prove a better quantitative lower bound for $v_{\alpha}$, as long as $v_{\alpha}(x,t) \geq 0$
holds, that is, for $t\in [t_0,T']$, where $T'$ is some possibly smaller time $T'(C_{\init},C_0,C_1)$.

The method is very close to that used in the proof of
the claim that $v_\alpha\geq0$, so we avoid unnecessary repetition. Let $\epsilon\in(0,a_\alpha-\mu_\alpha t_1)$ be arbitrary, and again let $v_{\alpha,\epsilon} := (v_{\alpha} + \epsilon)^{-1}$.   
Note that in contrast to the previous argument, where~\eqref{eq:epsilon-lower-bound} is needed,
we have proven that $v_{\alpha} \geq 0$ above, hence we know that $v_{\alpha,\epsilon}$ is well-defined and
bounded by $\epsilon^{-1}$ for all $\epsilon > 0$. Then as in the arguments above, we find that
\[
\heat v_{\alpha,\epsilon} \leq-\frac{|\cv v_{\alpha,\epsilon}|^2}{v_{\alpha,\epsilon}}.
\]
Now let  $U(x,t) = v_{\alpha,\epsilon}(x,t)$,
$V(x,t) = v_{\alpha,\epsilon}(x,t_0)$, and $W = C'$.
Again using equation \eqref{eq:exp-bnd-v-eps} and the fact  that $U$ is bounded, we can apply Lemma \ref{lem:Tim-17} to obtain
\[
v_{\alpha,\epsilon}(x,t) \le (1+C'(t - t_0))\, v_{\alpha,\epsilon}(x,t_0),
\]
where $C' = C'(C_\init, C_0,C_1)$ is  independent of $\epsilon$. We let $\epsilon\searrow0$ to conclude that
\[
\frac{v_\alpha(x,t_0)}{1+C'(t-t_0) } \le v_\alpha(x,t)
\]
on $\mc M\times[t_0, T']$.
\end{proof}

\begin{lemma}
\label{lemma-gradient}
Under the assumptions of Proposition~\ref{lem:Tim-15}, 
there exist $C'(C_\init, C_0, C_1)$ and $T' = T'(C_\init, C_0, C_1) \in (t_0, t_1]$
such that for all $t \in [t_0, T']$, estimate \eqref{eq-grad-15}  holds.
\end{lemma}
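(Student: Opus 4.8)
The plan is to mirror the structure of the lower-bound argument in Lemma~\ref{lemma-v-lower}, but now applied to $\gamma_\alpha$ via the evolution inequality~\eqref{eq-nabla-v}. The key observation is that since $v_\alpha(x,t_0)>0$ and, by Lemma~\ref{lemma-v-lower}, $v_\alpha(x,t)\geq v_\alpha(x,t_0)/(1+C'(t-t_0))$ stays positive and comparable to its initial value on $[t_0,T']$, and since the curvature bound $|\Rm|\leq C_1$ gives (as in Lemma~\ref{lemma-v-lower}) $u_\alpha(x,t)\geq e^{-C'(t-t_0)}(a_\alpha-\mu_\alpha t_0)$ bounded below by a positive constant, we have a lower bound $u_\alpha\geq c_1>0$ on the whole parabolic rectangle, with $c_1$ depending only on $C_\init,C_0,C_1$ (and the initial data, through $a_\alpha$—but note that all the $T'$ and $C'$ are allowed to depend on $C_1$, which itself controls how far $u_\alpha$ can drop). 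Hence the nonlinear coefficient $\gamma_\alpha/u_\alpha^2$ in~\eqref{eq-nabla-v} is controlled: $\gamma_\alpha/u_\alpha^2 \leq c_1^{-2}\gamma_\alpha$, and $\gamma_\alpha$ itself is bounded on the rectangle by the curvature bound $C_1$ (since $\gamma_\alpha=|\cv v_\alpha|^2$ is controlled by $|\Rm|$ and the geometry via the curvature formulas of Appendix~\ref{multiply-warped}). So~\eqref{eq-nabla-v} becomes $\heat\gamma_\alpha \leq -\tfrac12 |\cv\gamma_\alpha|^2/\gamma_\alpha + C''\gamma_\alpha$ with $C''=C''(C_\init,C_0,C_1)$.

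The strategy is then to apply Lemma~\ref{lem:Tim-17} with $U(x,t)=\gamma_\alpha(x,t)$, $V(x,t)=G_\alpha(v_\alpha(x,t_0))$, and $W=C''$ constant. The required hypotheses on $U$ are exactly~\eqref{eq-nabla-v} in the form just derived, with $c=\tfrac12$. The required hypotheses on $V$—namely $|\heat V|/V + |\cv V|^2/V^2 \leq CW$—follow from the chain-rule inequalities in~\eqref{eq:easycalc}: indeed $\parexp{G_\alpha\circ v_\alpha(\cdot,t_0)} \leq \polyd{G_\alpha}^2\,\parexp{v_\alpha(\cdot,t_0)} \leq C_\init^2 \cdot C'$ by~\eqref{eq:exp-bnd-v} from Lemma~\ref{lemma-gradrm-bnd} and the Main Assumption $\|G_\alpha\|_{\mc G}\leq C_\init$. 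Here it is essential that $V$ is built from $v_\alpha$ at the \emph{fixed} time $t_0$ (so that the $\polyd{\cdot}$ bound applies), while $\Delta V$ and $|\cv V|^2$ are nonetheless computed with respect to the evolving metric $g(t)$—this is precisely the scenario Lemma~\ref{lemma-gradrm-bnd} was set up to handle. The initial bound $\gamma_\alpha(x,t_0)\leq C_0C_\init G_\alpha(v_\alpha(x,t_0))$ from~\eqref{eq-assump} gives $\sup_y U(y,t_0)/V(y,t_0) \leq C_0C_\init$. Finally, to invoke the quantitative conclusion of Lemma~\ref{lem:Tim-17} one needs the spatial-growth condition $U(x,t)\leq C'e^{C'd^2_{g(t)}(x',x)}V(x,t)$; this holds because $U=\gamma_\alpha$ is bounded on the rectangle (by $C_1$) while $V=G_\alpha(v_\alpha(\cdot,t_0))$ decays at most like $v_\alpha(\cdot,t_0)^2$ times a bounded factor (since $G_\alpha(s)/s^2$ is bounded above by $\|G_\alpha\|_{\mc G}$, and also bounded below away from $0$ on any compact $s$-range), and $v_\alpha(\cdot,t_0)$ decays at most exponentially by~\eqref{eq:exp-bnd-v}—so $U/V$ grows at most exponentially in distance, hence is bounded by $C'e^{C'd^2}$.

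Applying Lemma~\ref{lem:Tim-17} then yields, for $t\in[t_0,T']$ (possibly shrinking $T'$ as the lemma requires),
\[
\frac{\gamma_\alpha(x,t)}{G_\alpha(v_\alpha(x,t_0))} \leq C_0C_\init + 2\lambda(t-t_0)C''\bigl(1+C_0C_\init\bigr),
\]
which after renaming constants is exactly~\eqref{eq-grad-15}: $\gamma_\alpha(x,t)\leq C_0C_\init(1+C'(t-t_0))G_\alpha(v_\alpha(x,t_0))$. The main obstacle is the bookkeeping around the lower bound for $u_\alpha$ and the resulting control of the reaction coefficient $\gamma_\alpha/u_\alpha^2$: one must be careful that the constant $C''$ in the linearized inequality $\heat\gamma_\alpha\leq -\tfrac12|\cv\gamma_\alpha|^2/\gamma_\alpha + C''\gamma_\alpha$ depends only on $C_\init,C_0,C_1$ and not on the (possibly tiny) constants $a_\alpha$—but this is precisely guaranteed by working on a time interval $[t_0,T']$ short enough (depending on $C_1$) that $u_\alpha$ cannot have decayed much from $a_\alpha-\mu_\alpha t_0$, and by noting that all estimates are stated with $C'$ allowed to depend on $C_1$. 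A secondary, more routine point is verifying the spatial-growth hypothesis of Lemma~\ref{lem:Tim-17}, which as indicated above reduces to the at-most-exponential decay of $v_\alpha(\cdot,t_0)$ already extracted in Lemma~\ref{lemma-gradrm-bnd}.
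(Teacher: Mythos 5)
Your overall architecture is exactly the paper's: establish a positive lower bound for $u_\alpha$, use the curvature bound to control the reaction coefficient in~\eqref{eq-nabla-v}, linearize to $\heat\gamma_\alpha\leq -\tfrac12|\cv\gamma_\alpha|^2/\gamma_\alpha+C''\gamma_\alpha$, and apply Lemma~\ref{lem:Tim-17} with $U=\gamma_\alpha$, $V=G_\alpha(v_\alpha(\cdot,t_0))$, $W$ constant, checking $\parexp{V}\leq C'$ via~\eqref{eq:easycalc} and~\eqref{eq:exp-bnd-v}. However, there is a genuine error in your justification of two of the hypotheses: you assert that ``$\gamma_\alpha$ itself is bounded on the rectangle by the curvature bound $C_1$.'' This is false. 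The curvature bound controls only the \emph{scale-invariant} quantity: from the purely vertical components of~\eqref{eq:rmmultiple} one gets $\big|c_\alpha u_\alpha^{-1}-\tfrac12|\cv(\log u_\alpha^{1/2})|^2\big|\leq C_1$, which together with the lower bound on $u_\alpha$ yields $\gamma_\alpha/u_\alpha^2\leq C'$ --- not a bound on $\gamma_\alpha$. Indeed, the Main Assumptions only force $|\cv\log v_\alpha|^2$ to be bounded, so $v_\alpha$ (hence $u_\alpha$) may grow exponentially in space, and already at $t=t_0$ one only has $\gamma_\alpha\leq C_0C_\init G_\alpha(v_\alpha)\lesssim v_\alpha^2$, which is unbounded in general.

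The error is repairable without changing your strategy, but both places where you invoke it need fixing. For the linearization, the needed conclusion $\gamma_\alpha/u_\alpha^2\leq C''$ should be obtained directly from the identity $\gamma_\alpha/u_\alpha^2=4|\cv(\log u_\alpha^{1/2})|^2$ and the curvature formula as above (this is~\eqref{eq-gamma-est1} in the paper), rather than through your chain ``$\gamma_\alpha/u_\alpha^2\leq c_1^{-2}\gamma_\alpha$ and $\gamma_\alpha$ bounded,'' whose second link fails. For the spatial-growth hypothesis of Lemma~\ref{lem:Tim-17}, boundedness of $U$ must be replaced by the weaker (and true) statement that $U$ grows at most exponentially in space: since $|\cv\log u_\alpha|$ is bounded, $u_\alpha(\cdot,t)$ grows at most exponentially, hence so does $\gamma_\alpha\leq C'u_\alpha^2$; paired with the at-most-exponential decay of $V$ (from $|\cv\log V|$ bounded), this still gives $U/V\leq C'e^{C'd_{g(t)}(x,x_0)}$, which suffices. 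With these two corrections your argument coincides with the one in the paper.
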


\begin{proof}

Since we have \eqref{eq-lower-u},
we can find $T'$ sufficiently small so that
\begin{equation} \label{eq-lower-first-u}
u_{\alpha}(x,t) \ge \frac 12 \inf_{\mc{M}} u_{\alpha}(\cdot,t_0) > 0 \qquad \mbox{ for } \quad t\in[t_0,T'].
\end{equation}
Recalling that\footnote{See~\eqref{eq:KN-convention} for our normalization of the Kulkarni--Nomizu product $\KN$.}
 $\Rm[g_{\mc F_\alpha}]=c_\alpha\,g_{\mc F_\alpha}\KN g_{\mc F_\alpha}$ and using formula~\eqref{eq:rmmultiple},
which we derive in Lemma~\ref{claim:rmmultiple} in Appendix~\ref{multiply-warped},
one sees easily that the bound $|\Rm[g(t)]| \le C_1$ on $[t_0,t_1]$ implies that
\[
 \big| c_\alpha u_\alpha^{-1} - \frac12 |\cv(\log u_\alpha^{1/2})|^2\big| \leq C_1.
 \]
Combining this with~\eqref{eq-lower-first-u} implies the existence of $C'=C'(\inf_{\mc{M}}\, u_{\alpha}(\cdot,t_0),\,C_1)$ such that
\begin{equation}	\label{eq-gamma-est1}
\frac{\gamma_{\alpha}}{u_{\alpha}^2} = 4 |\cv(\log u_\alpha^{1/2})|^2 \le C'
\end{equation}
for all $(x,t)\in \mc{M}\times [t_0,T']$.
Using \eqref{eq-nabla-v} and \eqref{eq-gamma-est1} yields
\begin{equation}
\label{eq-nabla-v1}
\heat\gamma_{\alpha} \le  C'\, \gamma_{\alpha} -\frac 12 \, \frac{|\nabla \gamma_{\alpha}|^2}{\gamma_{\alpha}}.
\end{equation}
We apply Lemma \ref{lem:Tim-17} to \eqref{eq-nabla-v1} with $U(x,t) = \gamma_{\alpha}(x,t)$,
$V(x,t) = G_{\alpha}\big(v_{\alpha}(x,t_0)\big)$, and $W(x,t) = C'$. To see that all assumptions of Lemma~\ref{lem:Tim-17}
are satisfied, we need to check that $\parexp{V}$ is bounded by $C'$. Indeed, by \eqref{eq:easycalc} and \eqref{eq:exp-bnd-v} we have
\begin{equation}\label{eq:exp-bnd-v-gamma}
\parexp{V} \le \polyd{G_{\alpha}}\, \parexp{v_{\alpha}(x,t_0)} \le C'.
\end{equation} 
We also need to check that $\frac{U(x,t)}{V(x,t)} \le C'\, e^{C' d^2_{g(t)}(x,x_0)}$ for $t\in [t_0,T']$, where $x_0$ is some fixed point in $\mc B$. Indeed, since $\nabla u_{\alpha} = \nabla v_{\alpha}$, \eqref{eq-gamma-est1} implies that for every $t\in [t_0,T']$, the function $u_{\alpha}(\cdot,t)$ grows at most exponentially in space and thus, for every $t\in [t_0,T']$, $U(x,t) = \gamma_{\alpha}(x,t)$  grows at most exponentially in space as well.  On the other hand, by \eqref{eq:exp-bnd-v-gamma}, $|\nabla\log V|$ is bounded, so $V(x,t)$ has at most exponential decay. Hence,
\[\frac{U}{V}(x,t) \le C' e^{C' d_t(x,x_0)} \qquad \mbox{on}\,\,\,\, \mc{M}\times[t_0,t_1],\]
as desired.

We can finally apply Lemma \ref{lem:Tim-17} as indicated above to conclude that for $t\in [t_0,T']$, we have
\begin{equation}
\label{eq-nabla-bound-part}
\gamma_{\alpha}(x,t) \le (1 + C'\, (t - t_0)) \, C_0 C_{\init} G_{\alpha}(v_{\alpha}(x,t_0)),
\end{equation}
as desired.
\end{proof}
\medskip

\begin{lemma}
\label{lemma-kappa}
Under the assumptions of Proposition~\ref{lem:Tim-15}, there exist a constant $C'=C'(C_\init,C_0, C_1)$  and a time
$T' = T'(C_{\init},C_0,C_1) \in (t_0,t_1]$ such that for all times $t\in [t_0,T']$, estimate \eqref{eq-kappa-15} holds.
\end{lemma}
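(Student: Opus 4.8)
The plan is to mimic the structure of the proof of Lemma~\ref{lemma-gradient}, now applying Lemma~\ref{lem:Tim-17} to the Hessian quantity $\chi_\alpha$ in place of the gradient quantity $\gamma_\alpha$. First I would use \eqref{eq-lower-u} (or equivalently \eqref{eq-lower-first-u}) together with the uniform curvature bound $|\Rm[g(t)]|\le C_1$ on $[t_0,t_1]$ to obtain, after possibly shrinking $T'$, a uniform upper bound on the ``running coupling'' $L$ appearing in Lemma~\ref{lemma-ev-eq}. Indeed, \eqref{eq-gamma-est1} already gives $\gamma_\beta/u_\beta^2\le C'$; the term $\rho^{1/2}$ is bounded by $(C_0 C_\init)^{1/2}(1+C'(t-t_0))^{1/2}\le C'$ on $[t_0,T']$ by \eqref{eq-rho-15} (or, if one prefers not to invoke that estimate, directly by $|\Rm[g(t)]|\le C_1$, since $\rho$ is controlled by the full curvature); and $\chi_\beta^{1/2}/u_\beta$ is bounded because $|\Rm[g(t)]|\le C_1$ controls the warped-product curvature components, which by formula~\eqref{eq:rmmultiple} and Appendix~\ref{multiply-warped} involve $\chi_\beta/u_\beta^2$ up to lower-order terms already controlled. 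Hence $L\le C'$ on $\mc M\times[t_0,T']$.

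With $L\le C'$ in hand, the evolution inequality~\eqref{kappa-evolution} becomes
\[
\heat\chi_\alpha \le -\frac12\frac{|\nabla\chi_\alpha|^2}{\chi_\alpha} + C'\chi_\alpha + C'\sum_{\beta=1}^A\frac{\gamma_\beta}{u_\beta^2}\,\gamma_\alpha \le -\frac12\frac{|\nabla\chi_\alpha|^2}{\chi_\alpha} + C'\chi_\alpha + C'\gamma_\alpha,
\]
using $\gamma_\beta/u_\beta^2\le C'$ once more. Now I would invoke Lemma~\ref{lemma-gradient}, already proved, which gives $\gamma_\alpha(x,t)\le (1+C'(t-t_0))C_0C_\init G_\alpha(v_\alpha(x,t_0))\le C' G_\alpha(v_\alpha(x,t_0))$ on $[t_0,T']$. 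I would then apply Lemma~\ref{lem:Tim-17} with $U(x,t)=\chi_\alpha(x,t)$, with $V(x,t)=H_\alpha(v_\alpha(x,t_0)) + G_\alpha(v_\alpha(x,t_0))$ (adding the $G_\alpha$ term so that the inhomogeneous $C'\gamma_\alpha\le C' V$ is absorbed into the $C(UW+VW)$ structure required by the lemma; note $V$ is time-independent), and $W(x,t)=C'$. The hypotheses of Lemma~\ref{lem:Tim-17} on $V$ follow from the basic inequalities~\eqref{eq:easycalc}: since $H_\alpha, G_\alpha\in\mc G$, one has $\parexp{V}\le 2(\parexp{H_\alpha(v_\alpha(\cdot,t_0))} + \parexp{G_\alpha(v_\alpha(\cdot,t_0))}) \le 2(\polyd{H_\alpha}^2 + \polyd{G_\alpha}^2)\parexp{v_\alpha(\cdot,t_0)}\le C'$ by~\eqref{eq:exp-bnd-v}. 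The required pointwise exponential-growth bound $U/V \le C'e^{C'd^2_{g(t)}(x,x_0)}$ is checked exactly as in Lemma~\ref{lemma-gradient}: $\chi_\alpha$ grows at most exponentially in space because $u_\alpha$ does (by \eqref{eq-gamma-est1}) and because the curvature bound controls the relevant derivatives via Appendix~\ref{multiply-warped}, while $|\nabla\log V|$ is bounded, so $V$ decays at most exponentially.

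Lemma~\ref{lem:Tim-17} then yields $\chi_\alpha(x,t)\le \sup_y \frac{\chi_\alpha(y,t_0)}{V(y,0)} \cdot V + 2\lambda t W(1+\cdots)$; using the initial bound $\chi_\alpha(x,t_0)\le C_0C_\init H_\alpha(v_\alpha(x,t_0))\le C_0C_\init V$ from~\eqref{eq-assump} and absorbing the $G_\alpha$ contribution to $V$ into an enlarged $C'$ via $\|G_\alpha\|_{\mc G},\|H_\alpha\|_{\mc G}\le C_\init$, one obtains $\chi_\alpha(x,t)\le C_0C_\init(1+C'(t-t_0))H_\alpha(v_\alpha(x,t_0))$ for $t\in[t_0,T']$, which is~\eqref{eq-kappa-15}. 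The main obstacle is the bookkeeping at the two ends: first, showing cleanly that $L\le C'$ given only the full curvature bound (this requires care in reading off from Appendix~\ref{multiply-warped} which curvature components dominate $\chi_\beta/u_\beta^2$, and possibly a short argument or forward reference to close the loop); and second, arranging the comparison function $V$ so that the genuinely inhomogeneous term $C'\gamma_\alpha$ — which is \emph{not} a multiple of $\chi_\alpha$ — fits the $VW$ slot of Lemma~\ref{lem:Tim-17} while keeping $\parexp{V}\le C'$, which is why $V=H_\alpha+G_\alpha$ rather than $V=H_\alpha$ alone is the right choice.
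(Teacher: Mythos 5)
Your overall strategy (bound $L$ by $C'$, then apply Lemma~\ref{lem:Tim-17} to $U=\chi_\alpha$ with a time-independent comparison function built from the $t_0$-data) is the same as the paper's, and your treatment of $L\le C'$ and of the exponential-growth hypothesis matches what is actually done. But there is a genuine gap in how you handle the inhomogeneous term, and it propagates to a conclusion strictly weaker than~\eqref{eq-kappa-15}. You bound $\sum_\beta\gamma_\beta/u_\beta^2\le C'$ coarsely and keep only $C'\gamma_\alpha\le C'G_\alpha(v_\alpha(x,t_0))$, which forces you to take $V=H_\alpha+G_\alpha$; the output of Lemma~\ref{lem:Tim-17} is then
$\chi_\alpha(x,t)\le\big(C_0C_\init+C'(t-t_0)\big)\big(H_\alpha(v_\alpha(x,t_0))+G_\alpha(v_\alpha(x,t_0))\big)$.
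Your final step, ``absorbing the $G_\alpha$ contribution into an enlarged $C'$,'' requires $G_\alpha\le C'H_\alpha$, i.e.\ a uniform \emph{lower} bound on $\sum_\beta G_\beta(s_\beta)/s_\beta^2$, since by~\eqref{eq-H} one has $G_\alpha/H_\alpha=\big(\sum_\beta G_\beta(s_\beta)/s_\beta^2\big)^{-1}$. The class $\mc G$ in~\eqref{eq-set-G} only bounds $G_\beta(s)/s^2$ from \emph{above}; in the paper's own applications (Definition~\ref{def-adm-pert}, and the example $G(s)=s^3/(1+s)$) this quantity tends to $0$ as $s\searrow0$, so $G_\alpha/H_\alpha\to\infty$ and the absorption fails. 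The resulting additive $C_0C_\init G_\alpha$ error (present already at $t=t_0$) cannot be hidden in the $(t-t_0)$ term either, and the precise multiplicative form of~\eqref{eq-kappa-15} is exactly what is needed later to verify hypothesis~\eqref{eq:open-closed} of Proposition~\ref{lem:Tim-16} and to run the open--closed argument in Theorem~\ref{main}, so the weaker bound does not suffice downstream.

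The fix is to keep the product structure: using~\eqref{eq-v-lower-15} and~\eqref{eq-grad-15} one bounds
$\big(\sum_\beta\gamma_\beta/u_\beta^2\big)\gamma_\alpha\le C'\big(\sum_\beta G_\beta(v_\beta(x,t_0))/v_\beta^2(x,t_0)\big)G_\alpha(v_\alpha(x,t_0))=C'H_\alpha(v_\alpha(x,t_0))$,
so that $V=H_\alpha(v_\alpha(\cdot,t_0))$ alone serves as the comparison function and the lemma yields~\eqref{eq-kappa-15} exactly. (A secondary point: when verifying $\parexp{V}\le C'$, remember that $H_\alpha$ depends on all of $v_1,\dots,v_A$, so you must use the product and sum rules in~\eqref{eq:easycalc} on the factors $G_\beta(v_\beta(x,t_0))/v_\beta^2(x,t_0)$ rather than treating $H_\alpha$ as a single-variable composition with $v_\alpha$.)
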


\begin{proof}

Assume $T' \in (t_0,t_1]$ is chosen so that both~\eqref{eq-v-lower-15}
and~\eqref{eq-grad-15} hold on $[t_0,T']$.

Recall that in Lemma \ref{lemma-ev-eq}, we compute that $\chi_{\alpha}$ evolves by
\[
\heat\chi_{\alpha} \le  C_N L \chi_{\alpha} +
C_N  L \sum_{\beta=1}^A \left(\frac{\gamma_{\beta}}{u_{\beta}^2}\right)\, \gamma_{\alpha} -
\frac12\, \frac{|\nabla\chi_{\alpha}|^2}{\chi_{\alpha}},
\]
where $L := \sum_{\beta=1}^A \frac{\gamma_{\beta}}{u_{\beta}^2} +
\sum_{\beta=1}^A \frac{\chi_{\beta}^{1/2}}{u_{\beta}} + \rho^{1/2}$, $\rho := |\Rm_{\mc{B}}|^2$, and  $C_N$
depends only on the dimension vector $\vec N=(n,n_\alpha)$.
The curvature bound $|\Rm[g(t)]| \le C_1$  and~\eqref{eq-gamma-est1} imply that $|L| \le C'$ on $\mc{M}\times[t_0,T']$.
Hence we have
\[\heat \chi_{\alpha} \le  C' \chi_{\alpha} +
C' \left(\sum_{\beta=1}^A\frac{\gamma_{\beta}}{u_{\beta}^2}\right) \gamma_{\alpha} -
\frac12\, \frac{|\nabla\chi_{\alpha}|^2}{\chi_{\alpha}}.\]
By \eqref{eq-v-lower-15} and \eqref{eq-grad-15}, we see that for $t\in [t_0,T']$,
\[\left(\sum_{\beta=1}^A\frac{\gamma_{\beta}}{u_{\beta}^2}\right) \gamma_{\alpha} \le
C' \,\sum_{\beta=1}^A\frac{G_{\beta}(v_{\beta}(x,t_0))}{v_{\beta}^2(x,t_0)} \, G_{\alpha}(v_{\alpha}(x,t_0)).\]

Let $U(x,t) = \chi_{\alpha}(x,t)$, $V(x,t) =
\sum_{\beta=1}^A\frac{G_{\beta}(v_{\beta}(x,t_0))}{v_{\beta}^2(x,t_0)}\, G_{\alpha}(v_{\alpha}(x,t_0)) = H_{\alpha}(v_{\alpha}(x,t_0))$, and $W = C'$. We verify that $V(x,t)$ satisfies the hypotheses of Lemma \ref{lem:Tim-17}.
By using estimate~\eqref{eq:easycalc}, we obtain
\begin{align*}
\parexp{V} &\le
\Big| \sum_{\beta=1}^A \frac{G_{\beta}(v_{\beta}(x,t_0))}{v_{\beta}^2(x,t_0)} \Big|_{2,\exp}
 +\; \parexp{G_{\alpha}(v_{\alpha}(x,t_0))} \\
&\le 2\sum_{\beta=1}^A
\Big|\frac{G_{\beta}(v_{\beta}(x,t_0))}{v_{\beta}^2(x,t_0)}\Big|_{2,\exp}
+\,\parexp{G_{\alpha}(v_{\alpha}(x,t_0))}.
\end{align*}
Note that by \eqref{eq:easycalc} and \eqref{eq:exp-bnd-v}, we also have
\[\parexp{G_{\alpha}(v_{\alpha}(x,t_0))} \le \polyd{G_{\alpha}}^2\, \parexp{v_{\alpha}(x,t_0)} \le \bar{C}_{\alpha}^2 C'.\]
Moreover, we may regard $\frac{G_{\beta}(v_{\beta}(x,t_0))}{v_{\beta}^2(x,t_0)}$ as a composition of functions
$\vp_{\beta}(s) := \frac{G_{\beta}(s)}{s^2}$ and $v_{\beta}(x,t_0)$. Then using \eqref{eq:easycalc} again, we obtain
\[
\Big|\frac{G_{\beta}(v_{\beta}(x,t_0))}{v_{\beta}^2(x,t_0)}\Big|_{2,\exp} \le \polyd{\vp_{\beta}}^2\, \parexp{v_{\beta}(x,t_0)}.
\]
By \eqref{eq:exp-bnd-v}, we have $\parexp{v_{\beta}(x,t_0)} \le C'$. It is easy to see that
\[\frac{s \vp_{\beta}'}{\vp_{\beta}} \le \frac{s |G_{\beta}'(s)|}{G_{\beta}(s)} + 2 \le \bar{C}_{\beta} + 2\]
and
\[\frac{s^2 |\vp_{\beta}''|}{\vp_{\beta}(s)} \le \frac{s^2 |G_{\beta}''(s)|}{G_{\beta}(s)} + 4\, \frac{s |G_{\beta}'(s)|}{G_{\beta}(s)} + 6 \le 5\,\bar{C}_{\beta} + 6.\]
These imply that
\[
\Big|\frac{G_{\beta}(v_{\beta}(x,t_0))}{v_{\beta}^2(x,t_0)}\Big|_{2,\exp} \le C',
\]
and hence that
\begin{equation}
\label{eq-exp-decay}
\parexp{V} \le C'.
\end{equation}

Recall that in this proof, we choose $U = \chi_{\alpha} = |\nabla \nabla u_\alpha|_{g_\mc B}$. 
Our assumption that the curvature is bounded by $C_1$ implies in particular (by Remark \ref{rem-curv}) that $|u^{-1}\nabla \nabla u_\alpha - 1/2 u_\alpha^{-2}\nabla u_\alpha \otimes \nabla u_\alpha|$ is bounded by $C'$. Then \eqref{eq-gamma-est1} implies firstly that $|u^{-1}_\alpha\nabla \nabla u_\alpha| < C'$, and secondly that $u_\alpha$ grows at most exponentially in space, so that $\chi_\alpha$ grows at most exponentially in space.  On the other hand, by \eqref{eq-exp-decay}, we have that $V(x,t)$ decays at most exponentially in space.  These two estimates yield the bound $\frac{U}{V} \le C' e^{C'\, d_{g(t)}(x,x_0)}$. 

We can now apply Lemma \ref{lem:Tim-17} to our choice of $U(x,t)$, $V(x,t)$, and $W(x,t)$ to conclude
\[\chi_{\alpha}(x,t) \le C_0 C_{\init}\, (1+C'\,(t-t_0)) H_{\alpha}(v_{\alpha}(x,t_0)),\]
where we use the initial condition that $\chi_\alpha(x,t_0)\leq C_0C_{\init}\,H_\alpha\big(v_\alpha(x,t_0)\big)$.
\end{proof}

\begin{lemma}
\label{lemma-rho}
Under the assumptions of Proposition~\ref{lem:Tim-15}, there exist a constant $C'=C'(C_\init,C_0, C_1)$  and a 
time $T' = T'(C_{\init},C_0,C_1) \in (t_0,t_1]$ such that for all times $t\in [t_0,T']$, estimate \eqref{eq-rho-15} holds.
\end{lemma}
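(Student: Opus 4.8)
The plan is to apply Lemma~\ref{lem:Tim-17} to the evolution inequality \eqref{eq-rho} for $\rho$, using the linear growth bounds for $\gamma_\alpha,\chi_\alpha$ and the lower bound for $v_\alpha$ that have already been established on a common interval $[t_0,T']$ in Lemmas~\ref{lemma-v-lower}--\ref{lemma-kappa}. First I would fix $T'\in(t_0,t_1]$ small enough that \eqref{eq-v-lower-15}, \eqref{eq-grad-15}, \eqref{eq-kappa-15}, and \eqref{eq-lower-first-u} all hold on $[t_0,T']$, and recall from Lemma~\ref{lemma-ev-eq} that
\[
\heat\rho \le -\frac{|\nabla\rho|^2}{\rho} + C_N L^3,\qquad
L = \rho^{1/2} + \sum_{\beta=1}^A \frac{\gamma_\beta}{u_\beta^2} + \sum_{\beta=1}^A \frac{\chi_\beta^{1/2}}{u_\beta}.
\]
The curvature hypothesis $|\Rm[g(t)]|\le C_1$ on $[t_0,t_1]$ gives $\rho\le C_1^2$, and together with \eqref{eq-gamma-est1} and the full curvature bound (via Remark~\ref{rem-curv}, as in the proof of Lemma~\ref{lemma-kappa}) it also bounds $\chi_\beta^{1/2}/u_\beta$, so $L\le C'$ on $\mc M\times[t_0,T']$ with $C'=C'(C_\init,C_0,C_1)$. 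Hence the reaction term $C_N L^3$ is a bounded quantity, but to run Lemma~\ref{lem:Tim-17} I want it in the form $C(UW+VW)$; the natural move is to bound $C_N L^3 \le C' L^2 = C'\,\rho\cdot(L^2/\rho) + (\text{lower-order})$, or more simply to observe $C_N L^3 \le C'(\rho + 1)$ and then take $V\equiv C_0C_\init$, $U=\rho$, $W=C'$, so that $\heat\rho \le C'(U+V)W - |\nabla\rho|^2/\rho$ with the constant-in-space comparison function $V$.

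Next I would verify the structural hypotheses of Lemma~\ref{lem:Tim-17}. With $V\equiv C_0C_\init$ a constant in both space and time, one has $\heat V = 0$ and $|\cv V|=0$, so $|\heat V|/V + |\cv V|^2/V^2 = 0 \le C'W$ trivially; the choice $W=C'$ constant likewise satisfies $|\heat W| + |\cv W|^2 = 0 \le C'W$ and $W\le C'$. The gradient coefficient in \eqref{eq-rho} is $c=1$, but Lemma~\ref{lem:Tim-17} only requires $0<c<1$; this is harmless since $-|\nabla\rho|^2/\rho \le -c|\nabla\rho|^2/\rho$ for any $c\in(0,1)$, so I would simply replace $1$ by, say, $c=\tfrac12$. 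The remaining hypothesis is the growth condition $U(x,t)\le C'e^{C'd^2_{g(t)}(x',x)}V(x,t)$: since $V$ is a fixed constant this reduces to showing $\rho(x,t)$ grows at most (sub-)exponentially in $d_{g(t)}(x',\cdot)$, which follows from the uniform bound $\rho\le C_1^2$ — in fact $\rho$ is bounded, so the condition holds with room to spare. The initial condition $\rho(x,t_0)\le C_0C_\init$ is exactly the hypothesis of Proposition~\ref{lem:Tim-15}, so $\sup_y U(y,t_0)/V(y,t_0)\le 1$.

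Applying the second conclusion of Lemma~\ref{lem:Tim-17} then yields, for $t\in[t_0,T']$ (after possibly shrinking $T'$ to the value $T'(c,C)$ produced by that lemma, which still depends only on $C_\init,C_0,C_1$),
\[
\frac{\rho(x,t)}{C_0C_\init} \le \sup_{y}\frac{\rho(y,t_0)}{C_0C_\init} + 2\lambda(t-t_0)C'\Big(1+\sup_y\frac{\rho(y,t_0)}{C_0C_\init}\Big) \le 1 + C'(t-t_0),
\]
i.e.\ $\rho(x,t)\le C_0C_\init(1+C'(t-t_0))$, which is \eqref{eq-rho-15}. I expect the only genuinely delicate point to be making sure the cubic reaction term $C_N L^3$ is correctly absorbed into the affine form $C(UW+VW)$ demanded by Lemma~\ref{lem:Tim-17} without introducing a dependence of $C'$ on anything beyond $C_\init,C_0,C_1$; since $L\le C'$ already uses the curvature bound $C_1$, writing $C_N L^3\le C_N C' L^2 \le C''(\rho+1)$ (using $\gamma_\beta/u_\beta^2\le C'$ and the bound on $\chi_\beta^{1/2}/u_\beta$ to control the non-$\rho$ pieces of $L^2$ by a constant) is routine, and then the argument closes exactly as above. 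This completes the proof of Lemma~\ref{lemma-rho}, and with it Proposition~\ref{lem:Tim-15}.
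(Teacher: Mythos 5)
Your proposal is correct and follows essentially the same route as the paper: bound $L$ by a constant using the curvature hypothesis and \eqref{eq-gamma-est1}, absorb $C_N L^3$ into $C'(\rho+1)$, and apply Lemma~\ref{lem:Tim-17} with $U=\rho$, $V$ a constant, and $W=C'$. Your added remarks (relaxing the gradient coefficient from $1$ to $c=\tfrac12$, and checking the growth hypothesis via the uniform bound $\rho\le C_1^2$) are minor points the paper leaves implicit, and do not change the argument.
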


\begin{proof}
Recall that in Lemma \ref{lemma-ev-eq}, we compute that $\rho(x,t) = |\Rm[g_{\mc{B}}](x,t)|$ evolves by 
\[\heat \rho \le  C_N L^3 - \frac{|\nabla\rho|^2}{\rho},\]
where $L = \sum_{\beta=1}^A \frac{\gamma_{\alpha}}{u_{\alpha}^2} +
\sum_{\beta=1}^A \frac{\chi_{\alpha}^{1/2}}{u_{\alpha}} + \rho^{1/2}$,
and $C_N$ depends only on the dimension vector $\vec N=(n,n_\alpha)$.
As in the proof of Lemma \ref{lemma-kappa}, we conclude that $|L| \le C'$ on $\mc{M}\times[t_0,t_1]$ and hence that
\[
\heat \rho \le  C_* C'\, L^2 - \frac{|\nabla\rho|^2}{\rho}.\]
By Lemma \ref{lemma-gradient} and Lemma \ref{lemma-kappa}, there exist constants $C'$ and $T'\in (t_0,t_1]$ so that for all $t\in [t_0,T']$, we have
\begin{align*}
L &\le C'\, \left(\sum_{\alpha=1}^A \frac{G_{\alpha}(v_{\alpha}(x,t_0))}{v_{\alpha}^2(x,t_0)} +
\sum_{\alpha=1}^A \frac{H_{\alpha}^{1/2}(v_{\alpha}(x,t_0))}{v_{\alpha}(x,t_0)} + \rho^{1/2}\right) \\
&\le C'(\bar{C}_\alpha+ \rho^{1/2}),
\end{align*}
where $\bar{C}$ is a bound on
$\sup_{s_{\alpha}\in \mathbb{R}_+}\left(\sum_{\alpha=1}^A\frac{G_{\alpha}(s_{\alpha})}{s_{\alpha}^2}
+ \sum_{\alpha=1}^A \frac{H_{\alpha}(s_{\alpha})}{s_{\alpha}}\right)$, \emph{i.e.,} a uniform constant. Hence,
\[\heat \rho \le C_*C'\, (\rho + 1)  -  \frac{|\nabla\rho|^2}{\rho}.\]

We apply Lemma \ref{lem:Tim-17} with $U(x,t) = \rho(x,t)$, $V(x,t) = 1$, and $W(x,t) = C'$ to conclude that for all $t\in [t_0,T']$,
we have
\[\rho(x,t) \le C_0 C_{\init} (1+C'(t-t_0)).\]
\end{proof}

Combining Lemmas~\ref{lemma-gradrm-bnd}--\ref{lemma-rho} completes the proof of Proposition~\ref{lem:Tim-15}.
\medskip

Recall that the estimates~\eqref{eq-grad-15} and \eqref{eq-kappa-15} for $\gamma_\alpha(x,t)$ and $\chi_\alpha(x,t)$,
respectively, that we prove in Proposition~\ref{lem:Tim-15} have $v_\alpha(x,t_0)$ on the \textsc{rhs}.
Our next result improves those by substituting $v_\alpha(x,t)$ for $v_\alpha(x,t_0)$.

\begin{prop}	\label{cor-improvement}
Suppose the assumptions of Proposition \ref{lem:Tim-15} hold.

Then there exists $C' = C'(C_{\init},C_0,C_1,t_1-t_0)$  so that we have
\[\gamma_{\alpha}(x,t) \le  (1+ C'(t - t_0))\, C_0 C_{\init} G_{\alpha}(v_{\alpha}(x,t)),\]
\[\chi_{\alpha}(x,t) \le (1+ C'(t - t_0))\, C_0 C_{\init} H_{\alpha}(v_{\alpha}(x,t)),\]
for all $t\in [t_0,T']$, where $T'$ is the same as in Proposition \ref{lem:Tim-15}.
\end{prop}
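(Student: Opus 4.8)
The plan is to upgrade the bounds of Proposition~\ref{lem:Tim-15}, whose right-hand sides involve $v_\alpha(x,t_0)$, to the claimed bounds involving $v_\alpha(x,t)$, by showing that replacing the former by the latter costs only a factor that is $1+C'(t-t_0)$ for a possibly larger constant $C'$. The engine for this is the uniform equivalence of $v_\alpha$ at different times, which one reads off from estimate~\eqref{eq-v-lower-15} together with a matching upper bound. First I would record the two-sided comparison: \eqref{eq-v-lower-15} gives $v_\alpha(x,t)\ge v_\alpha(x,t_0)/(1+C'(t-t_0))$, and an upper bound $v_\alpha(x,t)\le (1+C'(t-t_0))\,v_\alpha(x,t_0)$ follows in the same way (it is immediate from $\heat v_\alpha=-\gamma_\alpha/u_\alpha\le 0$ combined with~\eqref{eq-gamma-est1}, or more precisely from running the same $v_{\alpha,\epsilon}$ argument, noting that $\heat v_\alpha \le 0$ already shows $v_\alpha$ is nonincreasing in the relevant sense). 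Hence on $[t_0,T']$ one has $v_\alpha(x,t)$ and $v_\alpha(x,t_0)$ comparable with ratio in $[(1+C'(t-t_0))^{-1},\,1+C'(t-t_0)]$.

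Next I would transfer this comparison through the functions $G_\alpha$ and $H_\alpha$, using that $G_\alpha,H_\alpha\in\mc G$. Since $\polyd{G_\alpha}\le\bar C_\alpha$, the logarithmic derivative bound $s|G_\alpha'(s)|/G_\alpha(s)\le\bar C_\alpha$ implies that for $s_1,s_2>0$ with $s_1/s_2\in[r^{-1},r]$ one has $G_\alpha(s_1)/G_\alpha(s_2)\in[r^{-\bar C_\alpha},\,r^{\bar C_\alpha}]$; taking $r=1+C'(t-t_0)$ gives $G_\alpha(v_\alpha(x,t_0))\le (1+C'(t-t_0))^{\bar C_\alpha} G_\alpha(v_\alpha(x,t))$, and for $t-t_0$ bounded (here by $t_1-t_0$) this is at most $(1+C''(t-t_0))\,G_\alpha(v_\alpha(x,t))$ for a new constant $C''=C''(\bar C_\alpha, C', t_1-t_0)$. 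The same reasoning applies to $H_\alpha$, which also lies in $\mc G$ with $\|H_\alpha\|_{\mc G}$ controlled by the $\|G_\beta\|_{\mc G}$. Substituting these into~\eqref{eq-grad-15} and~\eqref{eq-kappa-15} and absorbing the product of two factors of the form $1+C'(t-t_0)$ into a single such factor (again using $t-t_0\le t_1-t_0$) yields exactly the two claimed inequalities, with a constant $C'=C'(C_\init,C_0,C_1,t_1-t_0)$.

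The only mild subtlety — and the place I expect the bookkeeping to matter — is the dependence of the final constant on $t_1-t_0$: the clean estimate $(1+C'\delta)^{\bar C_\alpha}\le 1+C''\delta$ requires $\delta=t-t_0$ to range in a bounded interval, which is why the proposition states $C'$ depending on $t_1-t_0$ whereas Proposition~\ref{lem:Tim-15} did not. One should check that this is harmless for the later open–closed argument, where the relevant time increments are controlled a priori; but within the proof of this proposition itself there is no real obstacle, just the need to state the comparison $v_\alpha(x,t)\asymp v_\alpha(x,t_0)$ in both directions and to invoke the $\polyd{\cdot}$ bounds on $G_\alpha$ and $H_\alpha$ to push that comparison through the nonlinearities.
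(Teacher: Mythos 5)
Your overall strategy is the same as the paper's: establish that $v_\alpha(x,t)$ and $v_\alpha(x,t_0)$ are comparable up to a factor $1+C'(t-t_0)$, push that comparison through $G_\alpha$ and $H_\alpha$ using the logarithmic-derivative control encoded in $\polyd{\cdot}$ together with $G_\alpha(s)\le\bar C_\alpha s^2$, and substitute into \eqref{eq-grad-15} and \eqref{eq-kappa-15}. (The paper packages the two steps into one by bounding $|\partial_t\log G_\alpha(v_\alpha(x,t))|$ directly, but that is only an organizational difference.) The transfer through $G_\alpha$ via $s|G_\alpha'(s)|/G_\alpha(s)\le\bar C_\alpha$, and the absorption of $(1+C'\delta)^{\bar C_\alpha}$ into $1+C''\delta$ using $\delta\le t_1-t_0$, are both correct and explain the dependence of $C'$ on $t_1-t_0$ in the statement.

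The one genuine gap is your justification of the upper bound $v_\alpha(x,t)\le(1+C'(t-t_0))\,v_\alpha(x,t_0)$, which you do need (since $G_\alpha$ is not assumed monotone, the two-sided comparison of $v_\alpha$ at the two times is required to control $G_\alpha(v_\alpha(x,t_0))/G_\alpha(v_\alpha(x,t))$ from above). The claim that this is ``immediate from $\heat v_\alpha=-\gamma_\alpha/u_\alpha\le0$'' is false: a subsolution of the heat equation has nonincreasing supremum, but $\partial_t v_\alpha=\Delta v_\alpha-\gamma_\alpha/u_\alpha$ can be positive pointwise wherever $\Delta v_\alpha$ is large, so $v_\alpha$ need not decrease at a fixed $x$. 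Nor does ``the same $v_{\alpha,\epsilon}$ argument'' apply verbatim, since that argument bounds $(v_\alpha+\epsilon)^{-1}$ from above, i.e.\ it produces the \emph{lower} bound \eqref{eq-v-lower-15}. Two correct routes are available: (i) as in the paper, bound $|\partial_t v_\alpha|\le C'v_\alpha$ pointwise, which requires the Hessian bound \eqref{eq-kappa-15} together with \eqref{eq-v-lower-15} and $H_\alpha(s)\le\bar C_\alpha s^2$ to control $|\Delta v_\alpha|$ by $C'v_\alpha$, and then integrate in time; or (ii) apply Lemma~\ref{lem:Tim-17} with $U=v_\alpha(\cdot,t)$, $V=v_\alpha(\cdot,t_0)$, $W=C'$, verifying the hypothesis $\heat U\le C(UW+VW)-\tfrac12|\cv U|^2/U$ by noting that $\gamma_\alpha/v_\alpha\le C'v_\alpha(x,t_0)$, which follows from \eqref{eq-grad-15}, \eqref{eq-v-lower-15}, and $G_\alpha(s)\le\bar C_\alpha s^2$. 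With either repair the rest of your argument goes through; note also that for the $\chi_\alpha$ estimate the comparison must be carried out for every index $\beta$ appearing in $H_\alpha[s_1,\dots,s_A]$, not just for $\alpha$, though this is the same argument applied factor by factor.
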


\begin{proof}
By the chain rule, we have
\[\partial_t \big(G_{\alpha}(v_\alpha)\big) = G_{\alpha}' (v_\alpha)\,\partial_t v_\alpha,\]
which implies that
\begin{equation}	\label{eq-dt-G}
\begin{split}
\left|\partial_t G_{\alpha}(v_{\alpha}(x,t))\right|
&= \left|\frac{v_{\alpha}(x,t) G_{\alpha}'(v_{\alpha}(x,t))}{G_{\alpha}(v_{\alpha}(x,t))}\right|
\frac{G_{\alpha}(v_\alpha(x,t)}{v_{\alpha}(x,t)}
\left| \partial_t v_\alpha(x,t) \right| \\
&\le \bar{C}_{\alpha} G_{\alpha}(v_{\alpha}(x,t)) \, \left( \frac{|\nabla^2 v_{\alpha}(x,t)|}{v_{\alpha}(x,t)} + \frac{|\nabla v_{\alpha}(x,t)|^2}{v_{\alpha}^2(x,t)}\right).
\end{split}
\end{equation}
By \eqref{eq-v-lower-15} and \eqref{eq-grad-15}, which hold for $t\in [t_0,T']$, we have
\begin{equation*}
\begin{split}
|\nabla v_{\alpha}|^2(x,t) &\le (1+C'\,(t-t_0)) C_0 C_{\init} \bar{C}_{\alpha} v_{\alpha}^2(x,t_0) \\
&\le  (1+C'\,(t-t_0))^3 C_0 C_{\init} \bar{C}_{\alpha} v_{\alpha}^2 (x,t)
\end{split}
\end{equation*}
for all $t\in [t_0,T']$. This yields
\begin{equation}
\label{eq-bound-1}
\frac{|\nabla v_{\alpha}|^2}{v_{\alpha}^2} \le (1+C'\,(t-t_0))^3 C_0 C_{\init} \bar{C}_{\alpha} 
\qquad \mbox{for}\quad t\in [t_0,T'].
\end{equation}

To bound $\frac{|\nabla^2 v_{\alpha}(x,t)|}{v_{\alpha}(x,t)}$, we note that by \eqref{eq-v-lower-15} and \eqref{eq-kappa-15},
we have
\begin{equation*}
\begin{split}
\chi_{\alpha}(x,t) &\le (1 + C'\,(t - t_0)) C_0 C_{\init} H_{\alpha}(v_{\alpha}(x,t_0)) \\
&\le (1 + C'\,(t - t_0)) C_0 C_{\init} \bar{C}_{\alpha} v_{\alpha}^2(x,t_0) \\
&\le (1 + C'\,(t - t_0))^3 C_0 C_{\init} \bar{C}_{\alpha} v_{\alpha}(x,t)^2,
\end{split}
\end{equation*}
implying that
\[\frac{|\nabla^2 v_{\alpha}|^2}{v_{\alpha}^2} \le \,\Big(1+ C'\,(t-t_0)\Big)^3 C_0 C_{\init}\bar{C}_{\alpha}, \qquad \mbox{for} \,\,\,\, t\in [t_0,T'],\]
where $\bar{C}_{\alpha}$  is a uniform constant. 
Combining this estimate with \eqref{eq-bound-1} and \eqref{eq-dt-G} yields
\[\left|\partial_t \log G_{\alpha}(v_{\alpha}(x,t))\right| \le (1+ C'\,(t-t_0))^3\, C_0 C_{\init} \bar{C}_{\alpha},\]
and hence
\begin{equation}
\label{eq-equiv-G}
G_\alpha(v_\alpha(x,t))\leq(1+C'(t-t_0))\,G_\alpha(v_\alpha(x,t_0))
\qquad \mbox{for all} \,\,\,\, t\in [t_0,T'].
\end{equation}
We combine \eqref{eq-grad-15} and \eqref{eq-equiv-G} to conclude that for all $t\in [t_0,T']$, we have
\[\gamma_{\alpha}(x,t) \le  (1 + C'\,(t - t_0)) C_0 C_{\init}\,G_{\alpha}(v_{\alpha}(x,t)).\]
Finally, using \eqref{eq-v-lower-15}, \eqref{eq-kappa-15}, and \eqref{eq-equiv-G} yields
\[\chi_{\alpha}(x,t) \le (1 + C'\,(t - t_0)) C_0 C_{\init} H_{\alpha}(v_{\alpha}(x,t))\]
for all $t\in [t_0,T']$, as claimed.
\end{proof}
\medskip

We now prove our second pair of Propositions, which provide control of the curvatures by a constant that
depends only on the initial data. Specifically, in contrast to Propositions~\ref{lem:Tim-15} and~\ref{cor-improvement},
the constant we obtain below is independent of the bound $\sup_{(x,t)\in\mc B\times[t_0,t_1]}|\Rm(x,t)|\leq C_1$.

\begin{prop}	\label{lem:Tim-16}
Let $(\mc M,g_\init)$ satisfy the Main Assumptions in Section~\ref{sec:assumptions}.

Suppose a solution $g(t)$ of Ricci flow exists for $[0,T]$ and satisfies
\begin{equation}
\label{eq-initial-16}
\rho \le C_{\init},\quad
\gamma_{\alpha}(x,0) \le C_{\init} G_{\alpha}(v_{\alpha}(x,0)), \quad \chi_{\alpha}(x,0) \le C_{\init}\,H_{\alpha}(v_{\alpha}(x,0)).
\end{equation}
Furthermore, suppose that for $t\in [0,T]$, we have $v_\alpha(x,0)>0$ and
\begin{subequations}	\label{eq:open-closed}
\begin{align}
\gamma_\alpha(x,t)&\leq 2\,C_{\init}\,G_\alpha\big(v_\alpha(x,t)\big),\\
\chi_\alpha(x,t)&\leq 2\, C_{\init}\,H_\alpha\big(v_\alpha(x,t)\big),\\
\rho(x,t)&\leq 2\,C_\init.
\end{align}
\end{subequations}

Then there exists $C_*$ depending only on $C_\init$ and $\vec N=(n,n_\alpha)$ 
such that for $t\in[0,\min\{T,C_*^{-1}\}]$,
one has the bounds
\begin{subequations}
\begin{align*}
\gamma_\alpha(x,t)&\leq C_{\init} G_{\alpha}(v_{\alpha}(x,t)) \,\, \Big(1 + C_* t E_{\alpha} \big(v_{\alpha}(x,t)\big)\Big), \\
\chi_\alpha(x,t)&\leq C_{\init}\,H_\alpha\big(v_\alpha(x,t)\big)\, \Big(1 + C_*\, t\Big),\\
\rho(x,t)&\leq C_\init\, (1 + C_* \, t),
\end{align*}
where
\[
E_\alpha\big(v_\alpha(x,t)\big):=\frac{G_\alpha\big(v_\alpha(x,t)\big)}{v_\alpha(x,t)^2}.
\]
\end{subequations}
\end{prop}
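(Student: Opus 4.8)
The strategy is to close a bootstrap: the \emph{open} hypotheses \eqref{eq:open-closed}, combined with the fact that $G_\alpha,H_\alpha\in\mc G$ with $\|G_\alpha\|_{\mc G},\|H_\alpha\|_{\mc G}$ bounded in terms of $C_\init$ and $\vec N$, force every coefficient and inhomogeneity appearing in the evolution inequalities of Lemma~\ref{lemma-ev-eq} to be dominated by a constant depending only on $C_\init$ and $\vec N$, with — crucially — \emph{no} dependence on the full curvature tensor. Once this is established, the conclusions follow from $2A+1$ essentially routine applications of the noncompact maximum principle Lemma~\ref{lem:Tim-17}, one for each of $\gamma_1,\dots,\gamma_A$, $\chi_1,\dots,\chi_A$, and $\rho$.

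The first task is to record the curvature-free bounds. Since $G_\alpha(s)\le C_\init s^2$ and, unwinding \eqref{eq-H}, $H_\alpha(s)\le AC_\init^2 s^2$, the open bounds give $\gamma_\alpha\le 2C_\init^2 v_\alpha^2$ and $\chi_\alpha\le 2AC_\init^3 v_\alpha^2$ on $[0,T]$; using $u_\alpha\ge v_\alpha$ (the factors $a_\alpha-\mu_\alpha t$ remain nonnegative while the flow exists, since $u_\alpha>0$ and $\inf_{\mc B}v_\alpha=0$ is preserved), this yields $\gamma_\alpha/u_\alpha^2\le 2C_\init E_\alpha(v_\alpha)\le 2C_\init^2$, $\chi_\alpha^{1/2}/u_\alpha\le C(C_\init,A)$, and $\rho^{1/2}\le\sqrt{2C_\init}$, hence $L\le\bar L(C_\init,\vec N)$. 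The same inputs together with $\heat v_\alpha=-\gamma_\alpha/u_\alpha$ (equation \eqref{eq-v}) give $\parexp{v_\alpha}\le 2\gamma_\alpha/v_\alpha^2\le 4C_\init^2$, and then \eqref{eq:easycalc} bounds $\parexp{G_\alpha(v_\alpha)}$, $\parexp{H_\alpha(v_\alpha)}$, and $\parexp{E_\alpha(v_\alpha)}$ (using that $\polyd{G_\alpha}$, $\polyd{H_\alpha}$, and $\polyd{\vp_\alpha}$ with $\vp_\alpha(s)=s^{-2}G_\alpha(s)$ are controlled by $\|G_\alpha\|_{\mc G}$). Sharpening this last point via the elementary pointwise bounds $|G_\alpha'(s)|\le\polyd{G_\alpha}\,G_\alpha(s)/s$ and $|G_\alpha''(s)|\le\polyd{G_\alpha}\,G_\alpha(s)/s^2$ together with $\gamma_\alpha\le 2C_\init G_\alpha(v_\alpha)$ gives the refined estimate $\parexp{G_\alpha(v_\alpha)}\le C'(C_\init)\,E_\alpha(v_\alpha)$; this extra factor $E_\alpha$ is exactly what permits the maximum principle to reproduce the $E_\alpha$ factor in the stated bound for $\gamma_\alpha$.

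With these in hand, the three applications of Lemma~\ref{lem:Tim-17} proceed as follows. For $\rho$: by \eqref{eq-rho}, $\heat\rho\le-|\nabla\rho|^2/\rho+C_N\bar L^3$, so we take $U=\rho$, $V\equiv1$, $W\equiv C_N\bar L^3$, with a suitable $C=C(C_\init,\vec N)>1$ in the lemma; the required Gaussian bound $U\le C'e^{C'd^2_{g(t)}(x',x)}V$ is immediate from $\rho\le 2C_\init$, and the initial bound $\rho(\cdot,0)\le C_\init$ yields $\rho(x,t)\le C_\init(1+C_*t)$. For $\gamma_\alpha$: by \eqref{eq-nabla-v}, $\heat\gamma_\alpha\le-\tfrac12|\nabla\gamma_\alpha|^2/\gamma_\alpha+6(\gamma_\alpha/u_\alpha^2)\gamma_\alpha$ with $6\gamma_\alpha/u_\alpha^2\le 12C_\init E_\alpha(v_\alpha)=:W(x,t)$; we take $U=\gamma_\alpha$, $V=G_\alpha(v_\alpha(x,t))$, $W$ as just defined, checking $\parexp{V}\le C'E_\alpha(v_\alpha)\le CW$, $|\heat W|+|\nabla W|^2\le CW$ with $W\le 12C_\init^2$, and that $\gamma_\alpha$ and $1/G_\alpha(v_\alpha)$ grow at most exponentially in $d_{g(t)}$ (since $|\nabla\log v_\alpha|^2\le\parexp{v_\alpha}$ is bounded). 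With $\gamma_\alpha(\cdot,0)\le C_\init G_\alpha(v_\alpha(\cdot,0))$ this gives $\gamma_\alpha(x,t)\le C_\init G_\alpha(v_\alpha(x,t))(1+C_*tE_\alpha(v_\alpha(x,t)))$. For $\chi_\alpha$: by \eqref{kappa-evolution}, $L\le\bar L$, and $\sum_\beta(\gamma_\beta/u_\beta^2)\gamma_\alpha\le\bigl(\sum_\beta 2C_\init E_\beta(v_\beta)\bigr)\bigl(2C_\init G_\alpha(v_\alpha)\bigr)=4C_\init^2 H_\alpha(v_\alpha)$, one obtains $\heat\chi_\alpha\le-\tfrac12|\nabla\chi_\alpha|^2/\chi_\alpha+C_N\bar L\chi_\alpha+4C_N\bar LC_\init^2 H_\alpha(v_\alpha)$; we take $U=\chi_\alpha$, $V=H_\alpha(v_\alpha(x,t))$, $W\equiv C_N\bar L\max\{1,4C_\init^2\}$, verify the hypotheses as before (now $W$ is constant and $\parexp{H_\alpha(v_\alpha)}$ is bounded by Step~1), and use $\chi_\alpha(\cdot,0)\le C_\init H_\alpha(v_\alpha(\cdot,0))$ to get $\chi_\alpha(x,t)\le C_\init H_\alpha(v_\alpha(x,t))(1+C_*t)$.

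Each of the $2A+1$ invocations of Lemma~\ref{lem:Tim-17} is valid on $[0,\min\{T,\tau\}]$ for a common $\tau=\tau(C_\init,\vec N)>0$, so choosing $C_*=C_*(C_\init,\vec N)$ large enough to dominate $\tau^{-1}$ and all the implied constants above makes all three families of estimates hold on $[0,\min\{T,C_*^{-1}\}]$, as claimed. I expect the only genuine obstacle to be the first step, namely extracting the curvature-independent bound on $L$ and, in particular, the refined estimate $\parexp{G_\alpha(v_\alpha)}\lesssim E_\alpha(v_\alpha)$: this is the gain that separates the quantitative output (with its decaying error term $C_*tE_\alpha$) from the merely qualitative hypothesis \eqref{eq:open-closed}. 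A lesser but recurring technical point is the verification, at each application, of the Gaussian growth condition needed to run the noncompact maximum principle, though this always reduces to the at-most-exponential spatial growth of $v_\alpha$ — hence of $G_\alpha(v_\alpha)$, $H_\alpha(v_\alpha)$, $\gamma_\alpha$, $\chi_\alpha$ — forced by the bound on $|\nabla\log v_\alpha|$.
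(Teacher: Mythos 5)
Your proposal is correct and follows essentially the same route as the paper: the same three applications of Lemma~\ref{lem:Tim-17} with $U=\gamma_\alpha$, $V=G_\alpha(v_\alpha)$, $W=E_\alpha(v_\alpha)$ (resp.\ $U=\chi_\alpha$, $V=H_\alpha(v_\alpha)$, $W=\mathrm{const}$, and $U=\rho$, $V=1$, $W=\mathrm{const}$), after using \eqref{eq:open-closed} to obtain curvature-independent bounds on $L$ and on $\parexp{v_\alpha}$, including the key refinement $\parexp{G_\alpha(v_\alpha)}\lesssim E_\alpha(v_\alpha)$ that produces the factor $E_\alpha$ in the $\gamma_\alpha$ bound. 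The only cosmetic difference is that you spell out the exponential-growth verifications a bit more explicitly than the paper does at this point.
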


\begin{proof}
The proof is very similar to the proof of Proposition \ref{lem:Tim-15}. We let $C_*' = C_*'(C_{\init})$ be a uniform constant 
that may increase from line to line, whereas $C_*$ is the final constant that appears in the statement above.

To obtain the desired bound for $\gamma_\alpha$, we recall estimate~\eqref{eq-nabla-v},
\[\heat\gamma_\alpha
 \le  6\, \left(\frac{\gamma_{\alpha}}{v_{\alpha}^2}\right)\, \gamma_{\alpha} - \frac12 \frac{|\nabla\gamma_{\alpha}|^2}{\gamma_{\alpha}}.\]
By \eqref{eq:open-closed},  we have
\[\gamma_{\alpha}(x,t) \le 2\,C_{\init} G_{\alpha}(v_{\alpha}(x,t)),\]
and hence
\[\heat\gamma_{\alpha} \le  C_*'\, E_{\alpha}(v_{\alpha})\, \gamma_{\alpha} - \frac12 \frac{|\nabla\gamma_{\alpha}|^2}{\gamma_{\alpha}}.\]

\smallskip

Our goal is to apply Lemma \ref{lem:Tim-17} to $U(x,t) = \gamma_{\alpha}(x,t)$, $V(x,t) = G_{\alpha}(v_{\alpha}(x,t))$, and $W(x,t) = E_{\alpha}(v_{\alpha}(x,t))$. In order to do this, we need to verify that 
the hypotheses of Lemma \ref{lem:Tim-17} are satisfied. By \eqref{eq:easycalc}, we have
\[\parexp{G_{\alpha}(v_{\alpha})} \le \polyd{G_{\alpha}}^2\, \parexp{v_{\alpha}} \le C_*'\, \parexp{v_{\alpha}}\]
and
\begin{equation}
\label{eq-parexp-v-100}
\parexp{v_{\alpha}} = 2\frac{\gamma_{\alpha}}{v_{\alpha}^2} \le C_*'\, E_{\alpha}(v_{\alpha}).
\end{equation}
Thus,
\[\parexp{G_{\alpha}(v_{\alpha})} \le C_*' \, E_{\alpha}(v_{\alpha}).\]

\smallskip

On the other hand, $E_{\alpha}(v_{\alpha}) \le \|G_\alpha\|_{\mc G}\leq  \bar{C}_{\alpha}$ and 
\[\parexp{E_{\alpha}(v_{\alpha})} \le \polyd{E_{\alpha}}^2 \parexp{v_{\alpha}}.\]
It is easy to see that $E_{\alpha}(s) = \frac{G_{\alpha}(s)}{s^2} \le C$ satisfies $\polyd{E_{\alpha}} \le C_*'$ and that $E_{\alpha}(v_{\alpha}) \le C_*'$ is bounded. Hence we have
\[\parexp{E_{\alpha}(v_{\alpha})} \le C_*',\]
implying that 
\[\left|\heat E_{\alpha}(v_{\alpha})\right| + |\nabla E_{\alpha}(v_{\alpha})|^2 \le C_*'\, E_{\alpha}(v_{\alpha}).\]

We can now apply Lemma \ref{lem:Tim-17} as indicated above to conclude that there exists 
$C_*' = C_*'(C_{\init},N)$ and $T_* = T_*(C_{\init},N)$ so that for all $t\in [0,\min\{T_*,T\}]$, we have
\begin{equation}
\begin{split}
\frac{\gamma_{\alpha}(x,t)}{G_{\alpha}(v_{\alpha}(x,t))} &\le C_{\init} + C_*' t\, (1+ C_{\init}) \, E_{\alpha}\big(v_{\alpha}(x,t)\big) \\
&\le C_{\init}\, \Big(1 + C_*\, t\, E_{\alpha}\big(v_{\alpha}(x,t)\big)\Big).
\end{split}
\end{equation}
This yields
\[\gamma_{\alpha}(x,t) \le C_{\init} \, G_{\alpha}(v_{\alpha}(x,t))\,
\Big(1 + C_*\, t\, E_{\alpha}\big(v_{\alpha}(x,t)\big)\Big)\]
for $t\in [0,\min\{T,T_*\}]$, as claimed. 

\medskip

To obtain the bound for $\chi_{\alpha}$, we recall estimate~\eqref{kappa-evolution}, namely
\[\heat\chi_{\alpha} \le  C_N\, L \chi_{\alpha}
+ C_N \, L \, \sum_{\beta=1}^A \frac{\gamma_{\beta}}{u_{\beta}^2}\, \gamma_{\alpha} - \frac12\, \frac{|\nabla\chi_{\alpha}|^2}{\chi_{\alpha}},\]
where $L = \sum_{\beta=1}^A \frac{\gamma_{\beta}}{u_{\beta}^2}
+ \sum_{\beta=1}^A \frac{\chi_{\beta}^{1/2}}{u_{\beta}} + \rho^{1/2}$,
and the constant $C_N$ depends only on $\vec N=(n,n_\alpha)$.
By the assumptions in~\eqref{eq:open-closed}, we have $L \le C_*'$ and
\[
\left(\sum_{\beta=1}^A \frac{\gamma_{\beta}}{u_{\beta}^2}\right)\gamma_{\alpha} \le C_*'\, H_{\alpha}(v_{\alpha}),\]
where $C_*'$ is a uniform constant depending only on $C_{\init}$. Thus,
\[\heat \chi_{\alpha} \le  C_N\,  \chi_{\alpha} + C_N  \, H_{\alpha}(v_{\alpha}) - \frac12\, \frac{|\nabla\chi_{\alpha}|^2}{\chi_{\alpha}}.\]

\smallskip

We want to apply Lemma \ref{lem:Tim-17} to $U(x,t) = \chi_{\alpha}(x,t)$, $V(x,t) = H_{\alpha}(v_{\alpha}(x,t))$, and $W(x,t) = C_*'$. By \eqref{eq:easycalc}, the fact that $\polyd{G_{\alpha}} + \polyd{E_{\alpha}} \le C_*'$,
\eqref{eq:open-closed}, and~\eqref{eq-parexp-v-100}, we have
\begin{equation*}
\begin{split}
\parexp{H_{\alpha}(v_{\alpha})} &\le \parexp{\sum E_{\beta}(v_{\beta})} + \parexp{G_{\alpha}(v_{\alpha})} \\
&\le 2\sum \parexp{E_{\beta}(v_{\beta})} + \parexp{G_{\alpha}(v_{\alpha})} \\
&\le C_*'\, \parexp{v_{\alpha}} = 2C_*'\, \frac{|\nabla v_{\alpha}|^2}{v_{\alpha}^2} \\
&\le C_*' \frac{G_{\alpha}(v_{\alpha})}{v_{\alpha}^2} \le C_*'.
\end{split}
\end{equation*}
By Lemma \ref{lem:Tim-17} applied as above, there exist $C_*'= C_*'(C_{\init}, C_N)$ and
$T_* = T_*(C_{\init}, C_N,)$ such that for all $t\in [0,\min\{T,T_*\}]$, we have 
\[\frac{\chi_{\alpha}(x,t)}{H_{\alpha}(v_{\alpha}(x,t))} \le C_{\init} + C_*' \, t\, (1 + C_{\init}),\]
which implies that
\[\chi_{\alpha}(x,t) \le C_{\init}\, H_{\alpha}(v_{\alpha}(x,t))\,( 1 + C_* t).\]
\medskip

To obtain the desired bound for $\rho(x,t)$, we recall estimate~\eqref{eq-rho},
\[\heat \rho \le   C_N L^3 -  \frac{|\nabla\rho|^2}{\rho},\]
where $L$ is as above,
and $C_N$ depends only on $ \vec N=(n,n_\alpha)$. Hence,
\[\heat \rho \le  C'_* L^2 - \frac{|\nabla\rho|^2}{\rho}.\]
As in Lemma \ref{lemma-rho}, using \eqref{eq:open-closed}, we obtain
\[\heat \rho \le C_*' \, (\rho + 1) - c_N\, \frac{|\nabla\rho|^2}{\rho}.\]

\smallskip

We take $U(x,t) = \rho(x,t)$, $V(x,t) = 1$ and $W(x,t) = C_*'$ and apply Lemma \ref{lem:Tim-17}.  It gives us the existence of
$C_*' = C_*'(C_{\init},T,C_N)$ and $T_* = T_*(C_{\init},T,C_N)$ such that for all $t\in [0,\min\{T,T_*\}]$, we have
\[\rho(x,t) \le C_{\init} + C_*'\, t\, (1 + C_{\init}),\]
implying that
\[\rho(x,t) \le C_{\init}\, (1 + C_*\, t),\]
for all $t\in [0,\min\{T,T_*\}]$, where $C_*$ is a uniform constant.

\medskip

Finally, we increase $C_*$ if necessary so that $C_*T_*\geq1$. This concludes the proof of
Proposition~\ref{lem:Tim-16}.
\end{proof}

Next, we improve Proposition~\ref{lem:Tim-15} by showing
in a precise sense that the quantities $v_\alpha$ are
uniformly equivalent for $t\in[0,\min\{T,C_*^{-1}\}]$, where $[0,T]$ is the time interval on which the
hypotheses of Proposition~\ref{lem:Tim-16} hold. As we note in the Introduction, Theorem~\ref{asymptotics}
follows easily from the arguments that we use to prove Proposition~\ref{uniform}.

\begin{prop}	\label{uniform}
Let $(\mc M,g_\init)$ satisfy the Main Assumptions  in Section~\ref{sec:assumptions},
and let $T$ and $C_*$ be as in the statement of Proposition~\ref{lem:Tim-16}.
Then the quantities $v_\alpha$ are uniformly equivalent:   for $t\in[0,\min\{T,C_*^{-1}\}]$, we have
\[
\frac{1}{C_*}v_\alpha(x,t)\leq v_\alpha(x,0)\leq C_* v_\alpha(x,t).
\]
Furthermore, if in addition to the Main Assumptions, it is also true that
\begin{equation}	\label{eq:decay}
\frac{G_\alpha(s)}{s^2}=o(1;\,s\searrow0),
\end{equation}
then for all $t\in[0,\min\{T,C_*^{-1}\}]$,
\[
v_\alpha(x,t)=\big(1+o(1;\,v_\alpha(x,0)\searrow0)\big)\,v_\alpha(x,0).
\]
\end{prop}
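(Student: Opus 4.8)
The plan is to derive both parts from Lemma~\ref{lem:Tim-17}, running essentially the argument of Lemma~\ref{lemma-v-lower} but with the curvature bound $C_1$ used there replaced throughout by the uniform bounds~\eqref{eq:open-closed} assumed in Proposition~\ref{lem:Tim-16}. Those bounds already give $\gamma_\alpha/u_\alpha^2\le\gamma_\alpha/v_\alpha^2\le 2C_\init E_\alpha(v_\alpha)\le C_*'$ and, via~\eqref{eq:evolve-base}--\eqref{eq:evolve-fiber}, control $|\partial_t g_{\mc B}|_{g_{\mc B}}$ and $|\partial_t\log u_\alpha|$ by constants $C_*'=C_*'(C_\init,\vec N)$; in particular $g_{\mc B}(t)$ and $g_{\mc B}(0)$ are uniformly comparable as bilinear forms on $[0,\min\{T,C_*^{-1}\}]$, and, using also~\eqref{eq:grad-bound}, \eqref{eq:Hessian-bound} and the Laplacian comparison~\eqref{eq:compare-Laplacians}, the quantity $\parexp{v_\alpha(\cdot,0)}$ is bounded there. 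The smoothness of the solution on $[0,T]$ supplies whatever remaining regularity is needed to invoke the noncompact maximum principle (Theorem~12.22 of~\cite{vol-144}), exactly as in Lemmas~\ref{lemma-gradrm-bnd}--\ref{lemma-v-lower}.

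For the inequality $v_\alpha(x,0)\le C_*v_\alpha(x,t)$: for small $\epsilon>0$ the function $v_{\alpha,\epsilon}:=(v_\alpha+\epsilon)^{-1}$ is well defined (since $v_\alpha>0$ on $[0,T]$) and bounded by $\epsilon^{-1}$; because $u_\alpha=v_\alpha+(a_\alpha-\mu_\alpha t)\ge v_\alpha$, one has $(v_\alpha+\epsilon)/u_\alpha\le1$ once $\epsilon\le a_\alpha-\mu_\alpha t$, and then the computation of Lemma~\ref{lemma-v-lower} gives $\heat v_{\alpha,\epsilon}\le -|\cv v_{\alpha,\epsilon}|^2/v_{\alpha,\epsilon}$. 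Applying Lemma~\ref{lem:Tim-17} with $U=v_{\alpha,\epsilon}(\cdot,t)$, $V=v_{\alpha,\epsilon}(\cdot,0)$, $W\equiv C_*'$ (note $\parexp{V}\le\polyd{(\cdot+\epsilon)^{-1}}^2\parexp{v_\alpha(\cdot,0)}\le C_*'$) and then letting $\epsilon\searrow0$ yields $v_\alpha(x,t)\ge v_\alpha(x,0)/(1+C_*'t)$. For the reverse inequality one uses $\heat v_\alpha=-\gamma_\alpha/u_\alpha\le0$ directly: with $U=v_\alpha$, $V=v_\alpha(\cdot,0)$, $W\equiv C_*'$ one has, for any fixed $c\in(0,1)$, $\heat U+c|\cv U|^2/U=\gamma_\alpha\bigl(\tfrac c{v_\alpha}-\tfrac1{u_\alpha}\bigr)\le \tfrac{c\gamma_\alpha}{v_\alpha}\le 2cC_\init E_\alpha(v_\alpha)\,v_\alpha\le C\,(UW+VW)$, so Lemma~\ref{lem:Tim-17} applies and gives $v_\alpha(x,t)\le(1+C_*'t)v_\alpha(x,0)$. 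After possibly enlarging the constant $C_*$ of Proposition~\ref{lem:Tim-16}, these two bounds become the stated uniform equivalence on $[0,\min\{T,C_*^{-1}\}]$.

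For the refined statement, assume~\eqref{eq:decay}, so that $\eta_\alpha(r):=\sup_{0<s\le r}E_\alpha(s)\to0$ as $r\searrow0$; then by the uniform equivalence and $\polyd{E_\alpha}\le C_*'$ one has $E_\alpha(v_\alpha(x,t))\le C_*'\,\eta_\alpha(C_*v_\alpha(x,0))$. One reruns the two applications of Lemma~\ref{lem:Tim-17} above with the constant control function $C_*'$ replaced by $W(x,t):=c_0\,E_\alpha(v_\alpha(x,t))^{1/2}$ for a suitable $c_0$: the gradient term is still dominated, $\heat v_\alpha+c|\cv v_\alpha|^2/v_\alpha\le 2cC_\init E_\alpha(v_\alpha)v_\alpha\le C\,v_\alpha W$ (using $E_\alpha\le\bar C_\alpha^{1/2}E_\alpha^{1/2}$); $W$ is an admissible control function since $\parexp{W}\le\polyd{E_\alpha^{1/2}}^2\parexp{v_\alpha(\cdot,t)}\le C_*'$; and the comparison function satisfies $\parexp{v_\alpha(\cdot,0)}\lesssim E_\alpha(v_\alpha(\cdot,0))^{1/2}\lesssim W$ --- here is where the product structure $H_\alpha=(\sum_\beta E_\beta)G_\alpha$ of~\eqref{eq-H} is essential, for~\eqref{eq:Hessian-bound} then gives $\chi_\alpha(\cdot,0)\le C_\init H_\alpha(v_\alpha(\cdot,0))\lesssim E_\alpha(v_\alpha(\cdot,0))v_\alpha(\cdot,0)^2$, hence $|\cv^2v_\alpha(\cdot,0)|\lesssim E_\alpha(v_\alpha(\cdot,0))^{1/2}v_\alpha(\cdot,0)$, and likewise $|\cv v_\alpha(\cdot,0)|\lesssim E_\alpha(v_\alpha(\cdot,0))^{1/2}v_\alpha(\cdot,0)$ by~\eqref{eq:grad-bound}. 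The conclusion of Lemma~\ref{lem:Tim-17} then reads $v_\alpha(x,t)/v_\alpha(x,0)\le1+4\lambda t\,W(x,t)$ (since $\sup_y v_\alpha(y,0)/v_\alpha(y,0)=1$), the symmetric application with $v_\alpha(\cdot,0)$ and $v_\alpha(\cdot,t)$ interchanged supplies the matching lower bound, and since $W(x,t)\to0$ as $v_\alpha(x,0)\searrow0$, iterating finitely many times in $t$ to reach all of $[0,\min\{T,C_*^{-1}\}]$ produces $v_\alpha(x,t)=(1+o(1;\,v_\alpha(x,0)\searrow0))v_\alpha(x,0)$. The same estimates without~\eqref{eq:decay} yield Theorem~\ref{asymptotics}, whose fiber factor is this bound with $\mc O$ for $o$ and whose base factor $(1+\mc O(1))g_{\mc B}(\cdot,0)$ records the comparability of $g_{\mc B}(t)$ with $g_{\mc B}(0)$.

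The step I expect to be the main obstacle is the bound on $\parexp{v_\alpha(\cdot,0)}$ --- in particular on its $\heat$ part $|\Delta_{g(t)}v_\alpha(\cdot,0)|/v_\alpha(\cdot,0)$: the piece $|\cv v_\alpha(\cdot,0)|^2/v_\alpha(\cdot,0)^2$ and, through~\eqref{eq:Hessian-bound} and~\eqref{eq:compare-Laplacians}, the base-Hessian piece are immediate from~\eqref{eq:grad-bound}--\eqref{eq:Hessian-bound}, but $\Delta_{g(t)}v_\alpha(\cdot,0)$ also feels the distortion of the Levi-Civita connection of $g_{\mc B}(t)$ relative to that of $g_{\mc B}(0)$ over $[0,t]$. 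Controlling this distortion at the required strength (and, for the refined statement, with the extra smallness $\lesssim E_\alpha(v_\alpha)^{1/2}$), together with checking the at-most-exponential spatial growth of the relevant ratios --- which follows from $|\cv\log v_\alpha|^2=\gamma_\alpha/v_\alpha^2\le C_*'$ --- is the technical core, to be handled via~\eqref{eq:evolve-base}, the bounds~\eqref{eq:open-closed}, and the interior regularity of the flow on $[0,T]$.
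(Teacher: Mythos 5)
Your overall strategy --- rerunning the maximum-principle machinery of Lemmas~\ref{lemma-gradrm-bnd}--\ref{lemma-v-lower} with the curvature bound $C_1$ replaced by the uniform bounds~\eqref{eq:open-closed} --- founders on precisely the step you flag at the end, and the deferral to ``interior regularity of the flow'' cannot rescue it. To apply Lemma~\ref{lem:Tim-17} with the time-independent comparison function $V=v_{\alpha,\epsilon}(\cdot,0)$ you must bound $|\Delta_{g(t)}v_\alpha(\cdot,0)|/v_\alpha(\cdot,0)$ by a constant depending only on $C_\init$ and the dimensions. Via~\eqref{eq:compare-Laplacians} this reduces to the Hessian of $v_\alpha(\cdot,0)$ taken with the Levi--Civita connection of $g_{\mc B}(t)$, and the discrepancy $\Gamma_{\mc B}(t)-\Gamma_{\mc B}(0)=\int_0^t\partial_s\Gamma_{\mc B}\,ds$ involves $\cv\Rc[g_{\mc B}]$ and $\cv^3u_\beta$ --- one derivative more than the quantities $\rho,\gamma_\beta,\chi_\beta$ that~\eqref{eq:open-closed} controls. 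The paper's Lemma~\ref{lemma-gradrm-bnd} gets this from Shi-type estimates seeded by $|\Rm|\le C_1$; but here $|\Rm|$ contains the fiber terms of size $\mu_\alpha/u_\alpha$ (see~\eqref{eq:curv-near-cyl}), which are not controlled by $C_\init$ and blow up as $a_\alpha\searrow0$, while the entire point of Propositions~\ref{lem:Tim-16}--\ref{uniform} is that their constants be independent of the $a_\alpha$. (The same objection defeats your claim that $|\partial_t\log u_\alpha|\le C_*'$: equation~\eqref{eq:evolve-fiber} contributes the term $-\mu_\alpha/u_\alpha$, which is unbounded near the singular time. Your comparability of $g_{\mc B}(t)$ with $g_{\mc B}(0)$ as bilinear forms is fine; it is the connection, hence the Hessian and Laplacian, that is not.)

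The intended argument avoids the maximum principle altogether. From $\partial_t v_\alpha=\Delta v_\alpha-|\cv v_\alpha|^2/u_\alpha$ one bounds the right-hand side \emph{pointwise at time $t$}: Remark~\ref{rem-curv} and~\eqref{eq:compare-Laplacians} give $|\Delta v_\alpha|\le C\big(\chi_\alpha^{1/2}+\sum_\beta(\gamma_\beta^{1/2}/u_\beta)\,\gamma_\alpha^{1/2}\big)$, and $|\cv v_\alpha|^2/u_\alpha\le\gamma_\alpha/v_\alpha$ since $u_\alpha\ge v_\alpha$; the hypotheses~\eqref{eq:open-closed} then yield $|\partial_t\log v_\alpha(x,t)|\le C_NC_*\|G_\alpha\|_{\mc G}$ at every point, and integrating this ordinary differential inequality in $t$ at fixed $x$ gives both inequalities of the first claim simultaneously. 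The refined claim follows identically because at points with $v_\alpha(x,0)$ small the coefficient is at most $\ve$. This also dissolves your remaining difficulties: there is no comparison function frozen at time $0$, hence no connection distortion to control; no need for the ``symmetric application'' of Lemma~\ref{lem:Tim-17} for the matching lower bound in the refined statement (the lemma is a forward parabolic estimate and its roles of $U$ and $V$ are not interchangeable); and no iteration over short time intervals.
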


\begin{proof}
It follows easily from~\eqref{eq:evolve-fiber} that there exists $C_N=C_N(n,n_\alpha)$  such that
\begin{equation}	\label{eq:v-evolution}
\partial_t v_\alpha = \Delta v_\alpha-\frac{|\cv v_\alpha|^2}{a_\alpha-\mu_\alpha t + v_\alpha}
\leq C_N\left( \frac{|\cv\cv v_\alpha|}{v_\alpha} +
\frac{v_\alpha}{a_\alpha-\mu_\alpha t+v_\alpha}\frac{|\cv v_\alpha|^2}{v_\alpha^2}\right)v_\alpha.
\end{equation}
By Remark~\ref{rem-curv} and Proposition~\ref{lem:Tim-16}, there exists $C_*$ 
such that the quantity in parentheses
on the \textsc{rhs} of~\eqref{eq:v-evolution} is bounded in absolute value by
\begin{equation}
\label{eq-12345}
C_*\left(\sqrt{\frac{H_\alpha\big(v_\alpha(x,t)\big)}{v_\alpha(x,t)^2}}+
\frac{G_\alpha\big(v_\alpha(x,t)\big)}{v_\alpha(x,t)^2}\right)\leq C_* \| G_\alpha\|,
\end{equation}
where we have used that $|\nabla^2 v_{\alpha}| \le \chi_{\alpha}^{1/2} + C_*\, \sum_{\beta=1}^A\frac{\gamma_{\beta}^{1/2}}{u_{\beta}}\, \gamma_{\alpha}^{1/2}$. So by Proposition~\ref{lem:Tim-16}, we have
\begin{equation*}
\begin{split}
\frac{|\nabla^2 v_{\alpha}|}{v_{\alpha}}&\le \frac{\chi_{\alpha}^{1/2}}{v_{\alpha}} + C_*\, \left(\sum_{\beta=1}^A \frac{\gamma_{\beta}}{v_{\beta}^2}\right)^{1/2}\, \frac{\gamma_{\alpha}^{1/2}}{v_{\alpha}} \\
&\le C_* \, \sqrt{\frac{H_\alpha\big(v_\alpha(x,t)\big)}{v_\alpha(x,t)^2}}.
\end{split}
\end{equation*}
This yields \eqref{eq-12345}.
Hence $\partial_tv_\alpha\leq C_NC_* \| G_\alpha \|\,v_\alpha$. This proves the first claim.
\medskip

To prove the second claim, we observe that it follows from assumption~\eqref{eq:decay} at $t=0$
and the first claim that for any $\ve>0$, there exists $\delta>0$ such that if
$v_\alpha(x,0)<\delta$, then  $G_\alpha\big(v_\alpha(x,t)\big)<\ve\,v_\alpha^2(x,t)$ uniformly for $t\in[0,T)$.
At any $x$ with $v_\alpha(x,0)<\delta$, one can then bound the quantity in parentheses
on the \textsc{rhs} of~\eqref{eq:v-evolution} in absolute value by
\[
\sqrt{\frac{H_\alpha\big(v_\alpha(x,t)\big)}{v_\alpha(x,t)^2}}+
\frac{G_\alpha\big(v_\alpha(x,t)\big)}{v_\alpha(x,t)^2}\leq \ve.
\]
Hence at such $x$, one has $\partial_t v_\alpha(x,t)\leq \ve\,C_N\,v_\alpha(x,t)$.
The second claim follows.
\end{proof}

\subsection{Proofs of main results}	\label{sec:main-proof}

In this section, we prove Theorem~\ref{main} and Corollary~\ref{shrink}.

\begin{proof}[Proof of Theorem~\ref{main}]

We define
\[
T_{\sup}:=\sup\{T\in[0,T_{\sing})\colon
\mbox{ the conclusions of Theorem~\ref{main} hold on } [0,T]\}.
\]
We prove the theorem in two steps.
\smallskip

\emph{Step 1.} We claim that  $T_{\sup}>0$.
To see this, we recall that by \cite{Shi89} and \cite{CZ06}, there exists $T_{\min}(n,n_\alpha,C_\init)>0$
such that Ricci flow with initial data $(\mc M,g_\init)$ has a unique smooth solution on $[0,T_{\min}]$. 
We first apply Propositions~\ref{lem:Tim-15}
and~\ref{cor-improvement} with $t_0=0$, $t_1=T_{\min}$, $C_0=1$, and $C_1=C_1(T_{\min})<\infty$.
They yield a constant $C'$ and a time $T_1\in(0,T_{\min}]$ such that the estimates
\begin{align*}
v_\alpha(x,t) &\geq \frac{v_\alpha(x,0)}{1+C't},\\
\rho(x,t) &\leq C_{\init}\,(1+C't),\\
\gamma_\alpha(x,t) &\leq C_{\init} (1 + C't)\,G_\alpha\big(v_\alpha(x,t)\big),\\
\chi_\alpha(x,t) &\leq C_{\init} (1 + C't)\, H_\alpha\big(v_\alpha(x,t)\big),
\end{align*}
hold on $[0,T_1]$. We may then choose $T_2\in(0,T_1]$ small enough that $C'T_2\leq1$.
This ensures that the hypotheses of Propositions~\ref{lem:Tim-16} and~\ref{uniform} are satisfied on $[0,T_2]$.

Next we apply Propositions~\ref{lem:Tim-16} and~\ref{uniform}
on $[0,T_2]$. They yield a time $T_3\in(0,T_2]$ depending
only on $\{C_{\init},n,n_\alpha\}$ such that the estimates claimed in Theorem~\ref{main} hold on $[0,T_3]$.
It follows that $T_{\sup}\geq T_3>0$, thus proving the claim.

\emph{Step 2.} We next claim that $T_{\sup}\geq\min\{T_{\sing},C_*^{-1}\}$. We prove the claim by
contradiction, so we may
suppose that $T_{\sup}<\min\{T_{\sing},C_*^{-1}\}$. Then because $T_{\sup}<T_{\sing}$ and the inequalities
in Theorem~\ref{main} are of the form $\leq$ rather than $<$, they hold on $[0,T_{\sup}]$ by continuity. 
So we may apply Propositions~\ref{lem:Tim-15} and~\ref{cor-improvement} with $t_0=T_{\sup}$,
$t_1=T_1\in(T_{\sup},T_{\sing})$ arbitrary, $C_0=1+C_*T_{\sup}$, and $C_1=C_1(T_1)<\infty$.
They yield a constant $C''$ and a time $T_4\in(T_{\sup},T_1]$ such that the estimates
\begin{align*}
v_\alpha(x,t) &\geq \frac{v_\alpha(x,T_{\sup})}{1+C''(t-T_{\sup})}\geq \frac{v_\alpha(x,0)}{C_*\big(1+C''(t-T_{\sup})\big)},\\
\rho(x,t) &\leq (1+C_*T_{\sup})\,C_{\init}\,\big(1+C''(t-T_{\sup})\big),\\
\gamma_\alpha(x,t) &\leq (1+C_*T_{\sup})\,C_{\init} \big(1 + C''(t-T_{\sup})\big)\,G_\alpha\big(v_\alpha(x,t)\big),\\
\chi_\alpha(x,t) &\leq (1+C_*T_{\sup})\,C_{\init} \big(1 + C''(t-T_{\sup})\big)\, H_\alpha\big(v_\alpha(x,t)\big),
\end{align*}
hold on $[0,T_4]$. Because $1+C_*T_{\sup}<2$ by assumption, these estimates let us apply
Propositions~\ref{lem:Tim-16} and~\ref{uniform}
and thus obtain the conclusions of Theorem~\ref{main} on $[0,T_5]$ for some $T_5>T_{\sup}$.
By definition of $T_{\sup}$, this is a contradiction, which proves the claim.\end{proof}

\begin{proof}[Proof of Corollary~\ref{shrink}]
We recall that by assumption, at least one fiber is a positively curved space form, and  that we have chosen $\varsigma$ so that
$\frac{a_\varsigma}{\mu_\varsigma}=\min\{\frac{a_\alpha}{\mu_{\alpha}}:\mu_\alpha>0\}$.
Because the constant $C_*$ in Theorem~\ref{main} depends only on our Main Assumptions, which are independent of 
$a_\varsigma$, we may without creating circular dependencies shrink $a_\varsigma$ (where, abusing notation,
we continue to denote this quantity by $a_\varsigma$) to create new initial data $g'_{\init}$ for which $a_{\varsigma} =\mu_\varsigma/C_*$.
Note that we do not change $v_\alpha(\cdot,0)$.

Now we apply Theorem~\ref{main} to Ricci flow originating from $g'_{\init}$. The Theorem controls
the evolving metric on $[0,\min\{T_{\sing},C_*^{-1}\})$, for the same constant $C_*$. 
Since the $v_\alpha$ are uniformly equivalent in time on $[0,\min\{T_{\sing},C_*^{-1}\})$, the condition
that $\inf_{x\in\mc B}v_\alpha(x,t)=0$, which holds at $t=0$ by construction (see Remark~\ref{inf}), also holds
on that entire interval. But this means that $T_{\sing}$ can be no larger than the formal vanishing time
$T_{\form} = a_\varsigma/\mu_\varsigma=C_*^{-1}$. This in particular implies that the conclusion of Theorem~\ref{main} holds for all $t\in [0,T_{\sing})$. However, we have $T_{\sing} = T_{\form}$, because for $t\in [0,C_*^{-1})$,
Theorem \ref{main} implies positivity of $v_{\varsigma}$, hence that $u_{\varsigma} > a_{\varsigma} - \mu_{\varsigma} t > 0$,
and gives control on the remaining curvatures.
\smallskip

Next we prove that solutions originating from initial data satisfying our Main Assumptions develop Type-I singularities at spatial infinity.
We recall inequality~\eqref{eq:curv-near-cyl}:
\[
\Big|\Rm[g]-\sum_{\alpha=1}^A u_\alpha^{-1}\Rm[g_{\mc F_\alpha}]\Big|_g
	\leq C\left\{\rho^{1/2}+\sum_{\alpha=1}^A(u_\alpha^{-2}\gamma_\alpha+u_\alpha^{-1}\chi_\alpha^{1/2})\right\}.
\]
Theorem~\ref{main} implies that the \textsc{rhs} is bounded by a constant $C'$ depending only on
$C_*=C_*\big(C_{\init},\vec N=(N,n_\alpha)\big)$
and $T_{\sing}$. Thus we find that as $t \nearrow T_{\sing}$,
\begin{align}\label{eq:asymptotic-rm}
  |\Rm| = \sum_{\alpha = 1}^A \frac{c_\alpha}{u_\alpha} + C',
\end{align}
where the constants $c_\alpha$ depend only on $\vec N$ and the Ricci constants $\mu_\alpha$.
Moreover, Proposition~\ref{uniform} implies that on any compact set, the functions
\[
u_\alpha(x, t) = (a_\alpha - \mu_\alpha t) + v_\alpha(x, t)
\]
are bounded from below, and thus the curvature is bounded from above.
But by Remark~\ref{inf}, $\inf v_\alpha(x, 0) = 0$, and by Theorem~\ref{main}, this remains true for all $t>0$ that the solution exists.
Thus at any such time, the warping function of the fiber $\mc F_\varsigma$ satisfies
\[
\sup_{x\in\mc B} u_\varsigma^{-1}(x, t) = (a_\varsigma - \mu_\varsigma t)^{-1},
\]
which shows that the singularity is Type-I and forms at spatial infinity, as claimed.
\end{proof}

\section{Applications}
\label{sec-application}

\subsection{Essential blowup sequences on noncompact manifolds}
Assume that a Ricci flow solution $(\mc M,g(t))$ develops a singularity at some time $T<\infty$.
This means that $\limsup_{t\nearrow T}\mc R(t) = \infty$, where $\mc R(t) := \sup_{\mc M} |\Rm[g(t)]|$.
A Ricci flow solution $(\mc M,g(t))$ that becomes singular at $T<\infty$ is called \emph{Type-I}
if there exists a constant $C >0$ such that for all $t\in [0,T)$, one has $\mc R(t) \le \frac{C}{T-t}$.

If $\big(\mc M,g(t)\big)$ is a Type-I Ricci flow solution, then
a point $p\in\mc M$ is called a \emph{Type-I singular point} if there exists an essential blowup sequence
 $(p_i,t_i)\in\mc M\times[0,T)$ so that
$\lim_{i\to\infty} p_i = p$ and $\lim_{i\to\infty} t_i = T$. To be an \emph{essential blowup sequence} means that there
exists a constant $c > 0$ such that
\[|\Rm[g(t_i)]|_{g(t_i)} (p_i,t_i) \ge \frac{c}{T-t_i}.\]
Because $\frac{\mr d}{\mr dt}\mc R(t)\leq C_n\mc R^2(t)$,
the curvature of a developing singularity  always grows at least at a Type-I
rate, and so such sequences always exist. If $\mc M$ is noncompact, however, it might be the case that
an essential blowup sequence does not limit to any Type-I singular point in $\mc M$.

In \cite{EMT}, it is proven that if $(\mc M,g(t))$ is a Type-I Ricci flow on $[0,T)$, and if $p\in\mc M$ is a
Type-I singular point,
then for every sequence $\lambda_j\to\infty$, the corresponding 
rescaled Ricci flows $(\mc M, g_j(t), p)$, defined on $[-\lambda_j T, 0)$
by $g_j(t) := \lambda_j\, g(T + \lambda_j^{-1}t)$ subconverge to a normalized nontrivial gradient shrinking Ricci soliton in canonical form. This is a solution $(\mc N, g, f)$ that exists on a time interval $(-\infty,T]$ and satisfies
\[
\Rc+\cv^2 f = \frac{1}{2(T-t)}g\qquad\mbox{ and }\qquad 
\frac{\partial}{\partial t}f=|\cv f|^2.
\]

The result in \cite{EMT} applies in the case that  $(\mc M, g(t))$ is a Type-I Ricci flow on a compact manifold $\mc M$,
or if $\mc M$ is noncompact and $p\in\mc M$ is a Type-I singular point. On the other hand, if $\mc M$ is noncompact and $(\mc M,g(t))$ is a Type-I flow with a singularity forming at spatial infinity, then a Type-I singular point may not exist ---
see the example suggested in Remark~1.3 of~\cite{EMT}. In this case, the results of~\cite{EMT} do not preclude the
possibility  that the limit along some blow up sequences is a nontrivial gradient shrinking Ricci soliton, while along some other blow up
sequences it is not. Our goal in this section is to use the results we have proven here to produce an example
exhibiting this phenomenon.
\medskip

Assume that $(\mc M,g(t))$ is a complete noncompact Type-I Ricci flow that develops a singularity at spatial infinity at
some time $T<\infty$.
That is, there exist sequences $p_j\in\mc M$, $t_j\in [0,T)$, and a uniform constant $c_0 > 0$ such that
$(p_j,t_j) \to (\infty, T)$ as $j\to\infty$, and
\[|\Rm|(p_j,t_j) \ge \frac{c_0}{T-t_j}.\]
This in particular implies that $(p_j,t_j)$ is an essential blow up sequence for the singularity developing at
spatial infinity at time $T$. If $\lambda_j$ is a sequence such that $\lim_{j\to\infty} \lambda_j = \infty$
and if $g_j(\cdot,t) := \lambda_j\, g(\cdot, T + t \lambda_j^{-1})$, then
a \emph{blowup limit} of the flow along the sequence $p_j$ is a pointed subsequential limit of
$(\mc M, g_j(\cdot,T), p_j)$, if it exists.
\smallskip

We now explore what blowup limits are possible for a particular family of noncompact Ricci flow
solutions of the type considered in this paper.

\begin{definition}
\label{def-adm-pert}
Let $g_{\mr{Eucl}}$ denote the Euclidean metric on $\mb R^k$.
A family of functions $\delta_{\alpha}(x) : \mathbb{R}^k \to \mb R_+$  specifies an
\emph{admissible perturbation} of
$g = g_{\mr{Eucl}} + \sum_{\alpha=1}^A a_\alpha\,g_{\mathcal{F}_\alpha^{n_\alpha}}$
on $\mathbb{R}^k\times \mathcal{F}^{n_1}\times\cdots\times\mathcal{F}^{n_A}$
if $\lim_{|x|\to\infty} \delta_{\alpha}(x) = 0$ and there exist functions $G_{\alpha} \in \mathcal{G}$ satisfying
\[
\frac{G_{\alpha}(\delta_{\alpha}(x))}{\delta_{\alpha}^2(x)} = o(1; \delta_{\alpha}(x) \searrow0)
\]
such that the metric
\begin{equation}	\label{eq:A-metric-ansatz}
g_{\init} = g_{\mr{Eucl}} + \sum_{\alpha=1}^A\big(a_{\alpha} + \delta_{\alpha}(x)\big)\, g_{\mathcal{F}^{n_{\alpha}}}
\end{equation}
satisfies the Main Assumptions.\footnote{We note that by Remark~\ref{positive}, satisfying the Main Assumptions
forces the functions $\delta_{\alpha}$ to be strictly positive everywhere.}
\end{definition}

We define the set
\begin{equation}
\label{eq-A}
\mathcal{A} := \{\delta_\alpha(x): \mathbb{R}^k\to\mb R_+\colon\delta_\alpha(x)
\mbox{ is admissible in the sense of Definition~\ref{def-adm-pert}}\}.
\end{equation}

\begin{lemma}
For any manifold $\mb R^k\times\mc S^p\times\mc S^q$, the set $\mc A$ is nonempty.
\end{lemma}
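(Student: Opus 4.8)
The plan is to exhibit a single explicit family of perturbations that provably lies in $\mc A$. The natural candidate is $\delta_\alpha(x) = c_\alpha (1+|x|^2)^{-\sigma}$ for small positive constants $c_\alpha$ and a small exponent $\sigma>0$ (or, if one prefers to avoid the pole in $|x|$ at the origin, $\delta_\alpha(x) = c_\alpha e^{-\sigma\sqrt{1+|x|^2}}$, which decays exponentially; but a polynomially decaying choice is cleaner because it makes the ``$o(1)$'' decay condition automatic). First I would fix $G_\alpha(s) = s^2$ for every $\alpha$, which lies in $\mc G$ since $\polyd{s^2}$ and $\sup_s s^2/s^2$ are both finite; note then $H_\alpha(s) = A\,s^2$, also in $\mc G$. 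With these choices the requirement $G_\alpha(\delta_\alpha(x))/\delta_\alpha(x)^2 = o(1;\delta_\alpha(x)\searrow 0)$ reads $1 = o(1)$, which fails — so in fact $G_\alpha = s^2$ is \emph{not} allowed and one must take $G_\alpha$ growing strictly faster than quadratically near $s=0$, e.g. $G_\alpha(s) = s^2/\log(e/s)$ for $s$ small, smoothly extended. This is the first point requiring care: choosing $G_\alpha\in\mc G$ with $G_\alpha(s)/s^2\to 0$ as $s\to 0$ while still dominating the actual gradient term $\gamma_\alpha(x,0)$.

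Second, with $\delta_\alpha$ polynomially decaying, I would compute the three quantities $\gamma_\alpha(x,0) = |\cv\delta_\alpha|_g^2$, $\chi_\alpha(x,0) = |\cv^2\delta_\alpha|_{g_{\mc B}}^2$, and $\rho(x,0) = |\Rm[g_{\mc B}]|^2$ for the ansatz \eqref{eq:A-metric-ansatz} on $\mb R^k\times\mc S^p\times\mc S^q$. Since $g_{\mc B} = g_{\mr{Eucl}}$ is flat, $\rho \equiv 0 \le C_\init$ trivially. For $\delta_\alpha = c_\alpha(1+|x|^2)^{-\sigma}$ one has $|\cv\delta_\alpha|_{\mr{Eucl}} \sim |x|(1+|x|^2)^{-\sigma-1}$, hence $\gamma_\alpha(x,0)$ is comparable to $\delta_\alpha(x)^2 \cdot |x|^2(1+|x|^2)^{-2}$, which is $\le \delta_\alpha(x)^2$ up to a constant, and is even $o(\delta_\alpha^2)$ as $|x|\to\infty$; similarly $|\cv^2\delta_\alpha|$ is controlled by $\delta_\alpha$ up to constants. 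One then checks that with $G_\alpha(s) = s^2/\log(e/s)$ (extended to a smooth element of $\mc G$), the bound $\gamma_\alpha(x,0)\le C_\init\,G_\alpha(\delta_\alpha(x))$ holds: indeed $\gamma_\alpha/G_\alpha(\delta_\alpha) \sim |x|^2(1+|x|^2)^{-2}\log(e/\delta_\alpha(x))$, and since $\log(e/\delta_\alpha(x))$ grows only like $\log|x|$ while $|x|^2(1+|x|^2)^{-2}$ decays like $|x|^{-2}$, this ratio is bounded; the same for $\chi_\alpha \le C_\init H_\alpha(\delta_\alpha)$. The gradient-of-curvature bound $|\cv\Rm[g_{\init}]|$ is automatic because the initial metric and all its derivatives are smooth and have bounded geometry (the base is flat, the fibers are compact symmetric spaces, and the warping functions together with all derivatives are bounded). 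Finally at least one $\mu_\alpha>0$ since the round spheres $\mc S^p$ (and $\mc S^q$, if $q\ge 1$, but in any case at least one of $p,q$ is positive by hypothesis) have positive Ricci curvature.

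Third, I would verify the $\delta_\alpha\to 0$ condition at spatial infinity — immediate from the explicit formula — and assemble the above into a statement that $\{\delta_\alpha\}$ is admissible, so $\mc A\neq\emptyset$. The main obstacle I anticipate is not any single estimate but the \emph{compatibility} of two competing demands on $G_\alpha$: it must lie in $\mc G$ (so $G_\alpha(s)/s^2$ bounded, $\polyd{G_\alpha}$ finite), it must dominate the actual $\gamma_\alpha$ pointwise, and yet $G_\alpha(s)/s^2$ must vanish as $s\to 0$. Reconciling these forces the perturbation $\delta_\alpha$ to decay strictly slower than exponentially (consistent with Remark~\ref{positive} and the discussion after Theorem~\ref{asymptotics}), and forces $G_\alpha$ to be a ``borderline'' function like $s^2/\log(e/s)$; one must check such a $G_\alpha$ genuinely lies in $\mc G$, i.e. that $s|G_\alpha'|/G_\alpha$ and $s^2|G_\alpha''|/G_\alpha$ stay bounded as $s\to 0$ — a short but genuine computation, since $\log(e/s)\to\infty$. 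Once that lemma about $G_\alpha$ is in hand, the rest is bookkeeping with the explicit radial profile.
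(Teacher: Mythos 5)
Your proposal is correct and follows the same overall strategy as the paper: take an explicit radial, polynomially decaying perturbation on the flat base (so $\rho\equiv0$), choose an explicit control function $G_\alpha$ with $G_\alpha(s)/s^2\to0$ as $s\searrow0$, and verify the pointwise bounds on $\gamma_\alpha$ and $\chi_\alpha$ by direct computation. The only substantive difference is the choice of $G_\alpha$: the paper takes $\delta_1=\delta_2=(1+r^2)^{-1}$ and $G_\alpha(s)=s^3/(1+s)$, which behaves like $s^3$ near $0$ (so $G_\alpha(s)/s^2\sim s\to0$ automatically) and like $s^2$ at infinity (so $\sup_s G_\alpha(s)/s^2<\infty$ and $\polyd{G_\alpha}<\infty$ are immediate, as $sG_\alpha'/G_\alpha=(3+2s)/(1+s)$ is bounded). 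This sidesteps the borderline logarithmic function $s^2/\log(e/s)$ you propose, which does work for the reasons you indicate --- your tension between ``$G_\alpha/s^2$ bounded, $G_\alpha$ dominates $\gamma_\alpha$, yet $G_\alpha(s)/s^2\to0$'' is exactly right, and is resolved because $|\cv\log\delta_\alpha|^2\to0$ for polynomially decaying $\delta_\alpha$ --- but which requires the extra (unperformed, though routine) verification that $\polyd{\cdot}$ stays finite through the smooth extension to all of $\mb R_+$. Your correction of the initial attempt $G_\alpha(s)=s^2$ is also right; note additionally that the plain cubic $s^3$ fails membership in $\mc G$ at large $s$, which is why the paper's $1/(1+s)$ factor is there. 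One small inaccuracy: positivity of $p$ does not give positive Ricci curvature of $\mc S^p$ (the circle is flat), so the Main Assumption that at least one $\mu_\alpha>0$ requires at least one fiber of dimension $\geq2$; this matches the paper's later restriction to $p\geq2$ (or to $\mc S^1\times\mc S^p$ with $p\geq2$).
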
 

\begin{proof}
In polar coordinates on $\mb R^k$, choose rotationally-symmetric warping functions
$\delta_1(r) =\delta_2(r) = \frac{1}{1+r^2}$, along with control functions
$G_1(s) =G_2(s) = \frac{s^3}{1+s}$.
Then it is straightforward to verify that the Main Assumptions are satisfied for the metric~\eqref{eq:A-metric-ansatz}, because
\[
\gamma_\alpha(r)=
|\nabla\delta_\alpha(r)|^2 = \frac{4r^2}{(1+r^2)^4}
\le \frac{4}{(1+r^2)^3} \le  8\,G_\alpha(\delta_\alpha(r)),\]
and 
\begin{align*}
  \chi_\alpha(r) = |\nabla \nabla \delta_\alpha(r)|^2
  &\le C_{init}\left(\frac{1}{(1+r^2)^4} + \frac{r^4}{(1+r^2)^6}\right) \\
  &\le \frac{C_{\init}}{(1+r^2)^4} \le C_{\init} H_\alpha(\delta(r)).
\end{align*}
Here we use the fact that $H_{\alpha}(\delta_{\alpha}(r)) \ge \frac{1}{2(1+r^2)^4}$, which is easy to check.
We also have $\rho\equiv0$ for the Euclidean metric. Thus we conclude that
$\delta_1,\delta_2 \in \mathcal{A}$.
\end{proof}

For the purpose of the applications that we discuss in this subsection, it suffices to consider a doubly-warped product.
Thus we fix $k=1$, $A=2$, and spherical fibers $\mc{F}^{n_1} =\mc F^{n_2}= \mc S^p$ $(p\geq2)$
in the remainder of this subsection.

To prove Theorem~\ref{thm-spatial-infinity}, we consider $\mc M = \mb R \times \mc S^p\times \mc S^p$
$(p\geq2)$, with initial metric 
\begin{equation}	\label{eq:A-initial-data}
g_{\init} = (\mr dx)^2 + \big(a_* + v_1(x,0)\big)\, g_{\mc S^p} + \big(a_* + v_2(x,0)\big)\, g_{\mc S^p},
\end{equation}
where $g_{\mc S^p}$ is the round metric scaled so that $2\Rc_{g_{S^p}} = g_{\mc S^p}$,
and  $v_1(x,0) = \delta_1(x)$ and $v_2(x,0) = \delta_2(x)$, where $\delta_1,\delta_2\in\mc A$
are functions such that
\[
\lim_{|x|\to\infty}\frac{\delta_1(x)}{\delta_2(x)} = \eta \in \mb R_+\setminus\{0,1\}.
\]
We require $\eta>0$ to ensure that $\delta_1$ and $\delta_2$ remain comparable, so that we may take
appropriate limits. We further require that $\eta\neq1$ to demonstrate the existence of sequences that
cannot limit to nontrivial gradient shrinking Ricci solitons.
\smallskip

A slight modification of the construction in this section shows that there exist noncompact Ricci flow solutions that develop Type-I
singularities for which there can be no blowup limits $(\mc M_\infty, g_\infty)$. Indeed,
if we consider $\mc M=\mb R\times\mc S^1\times\mc S^p$ with an initial metric that is not $\kappa$-noncollapsed, 
\[
g_{\init}=(\mr dx)^2+\delta_1(x)(\mr d\theta)^2+(a_*+\delta_2(x))g_{\mc S^p},
\]
where $\delta_1,\delta_2\in\mc A$, then our work in proving Theorem~\ref{thm-spatial-infinity} goes through, \emph{mutatis mutandis}, and establishes the following:

\begin{remark}
There exist complete, noncompact, collapsed Ricci flow solutions $\big(\mc M,g(t)\big)$ that develop Type-I
singularities at spatial infinity. For each of these solutions, Type-I blowups have no Cheeger--Gromov limits.
\end{remark}
As we note in the introduction, blowup limits may exist as \'etale groupoids, in the
sense considered in~\cite{Lott10}.
\smallskip

\begin{proof} [Proof of Theorem~\ref{thm-spatial-infinity}]
By the proof of Corollary~\ref{shrink},  we may choose $a_* > 0$ sufficiently small so that the estimates
of Theorem~\ref{main} hold for the solution originating from initial data~\eqref{eq:A-initial-data}
up to the singular time $T_{\sing} = a_*$, at which time it encounters a Type-I singularity.
For as long as it exists, the Ricci flow solution has the form 
\begin{equation}
\label{eq-as-long-as}
g(x,t) = \mr{d}x^2 + \big(a_* - t + v_1(x,t)\big)\,g_{\mc S^p} + \big(a_* - t + v_2(x,t)\big)\,g_{\mc S^p}.
\end{equation}
By the second part of Proposition~\ref{uniform}, we have
\begin{equation}
\label{eq-unif-equiv-small}
v_\alpha(x,t)=\big(1+o(1;\,\delta_\alpha(x)\searrow0)\big)\,\delta_\alpha(x)
\end{equation}
for all $t\in [0,a_*)$.

Initially, we consider \underline{any} sequence $(x_j,t_j) \to (\infty,a_*)$ for which
\begin{equation}	\label{eq:define-c-alpha}
\lim_{j\to\infty} \frac{\delta_{\alpha}(x_j)}{a_*-t_j} =: c_{\alpha} \in (0,\infty),\qquad (\alpha=1,2).
\end{equation}
Then our curvature estimates above easily imply that there exists a uniform constant $c_0 > 0$ such that
$|\Rm(x_j,t_j)| \ge \frac{c_0}{a_* - t_j}$.
Let $\lambda_j \to \infty$ be any sequence such that $\limsup_{j\to\infty} (T-t_j)\, \lambda_j < \infty$, and
let $g_j(t) = \lambda_j \, g(a_*+t\lambda_j^{-1})$. This rescaling ensures that each $g_j$ exists up to $t=0$;
indeed, because we have shown that the singularity is Type-I, it immediately follows that 
\begin{equation}
\label{eq-curv-bound-j}
|\Rm[g_j(t)]| \le \frac{C}{-t}
\end{equation}
for a uniform constant $C$. Note that in our discussion of convergence below, we always mean in the sense of
subsequential  convergence, even if we do not explicitly pass to subsequences.
\medskip

We rewrite $g_j(t) = dy^2 + u_{1j}\, g_{S^p} + u_{2j} \, g_{S^p}$,
where 
\begin{equation*}
\begin{split}
u_{\alpha j}(y,t) &= \lambda_j\, u_{\alpha}(x_j + y\lambda_j^{-1}, a_* + t \lambda_j^{-1}) \\
&= (-t) + \lambda_j v_{\alpha}(x_j + y\lambda_j^{-1}, a_* + t\lambda_j^{-1}).
\end{split}
\end{equation*}
Because our manifold is $\mc M = \mb R \times \mc S^p\times \mc S^p$ with $p\geq2$, the metrics $g_j$
are $\kappa$-noncollapsed. Hence by \eqref{eq-curv-bound-j}, a pointed sequence of Ricci flow solutions
$(\mc M, g_j(t), x_j)$ smoothly converges in the 
Cheeger--Gromov sense to an ancient Ricci flow solution $(\mc M_{\infty}, g_{\infty}(t), o)$ that exists for $t\in (-\infty, 0)$.

\begin{claim}	\label{claim-structure}
There exist smooth limits $u_{\alpha\infty}(y,t)$ of $u_{\alpha j} (y,t)$ as $j\to\infty$.

Thus the limit of the convergent subsequence $(\mc M, g_j(t), x_j)$ is
$\mc M_{\infty} = \mathbb{R}\times \mc S^p\times \mc S^p$ with the metric
\[
g_{\infty}(y,t) = dy^2 + u_{1\infty}(y,t) g_{\mc S^p} + u_{2\infty}(y,t)\, g_{\mc S^p}.
\]
\end{claim}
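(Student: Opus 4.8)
The plan is to upgrade the Cheeger--Gromov convergence to convergence of the warping functions themselves: we establish uniform local smooth bounds on the $u_{\alpha j}$, extract a $C^\infty$ limit, and then identify the resulting warped product with $g_\infty$ by uniqueness of Cheeger--Gromov limits. We first record two--sided bounds for the $u_{\alpha j}$. Since $v_\alpha\ge 0$ on $[0,a_*)$ (Theorem~\ref{main}) and $u_\alpha(x,s)=(a_*-s)+v_\alpha(x,s)$, the displayed formula for $u_{\alpha j}$ gives $u_{\alpha j}(y,t)=-t+\lambda_j v_\alpha(\cdot)\ge -t>0$ on every slice $\{t<0\}$; this is the lower bound that keeps the spherical directions nondegenerate. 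For the upper bound, Theorem~\ref{main} gives $v_\alpha(\cdot,s)\le C_*\delta_\alpha(\cdot)$ on $[0,a_*)$, the uniform bound on $|\cv\log\delta_\alpha|$ from~\eqref{eq:grad-bound} shows $\delta_\alpha(x)/\delta_\alpha(x_j)\to 1$ uniformly on compact $y$--sets (the relevant $x$ satisfy $|x-x_j|\to 0$), and $\lambda_j\delta_\alpha(x_j)=\lambda_j(a_*-t_j)\cdot\delta_\alpha(x_j)/(a_*-t_j)$ is bounded after passing to a further subsequence along which $\lambda_j(a_*-t_j)$ converges, using $\limsup_j(a_*-t_j)\lambda_j<\infty$ and~\eqref{eq:define-c-alpha}. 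Hence $c(K)\le u_{\alpha j}\le C(K)$ on $\mathbb{R}\times\mc S^p\times\mc S^p\times[-K,-K^{-1}]$, uniformly in $j$.

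Second, we obtain uniform $C^\infty_{\mathrm{loc}}$ bounds on the $u_{\alpha j}$. Because the singularity is Type-I (Corollary~\ref{shrink}), the rescaled flows obey the uniform curvature bound~\eqref{eq-curv-bound-j}, so the curvature of $g_j$ is bounded on $\mathbb{R}\times\mc S^p\times\mc S^p\times[-K,-K^{-1}]$ independently of $j$, and Shi's local derivative estimates bound every $|\cv^m\Rm[g_j]|$ there. In the product coordinates $(y,\theta_1,\theta_2)$ the rescaled metrics have the form $\phi_j\,dy^2+u_{1j}\,g_{\mc S^p}+u_{2j}\,g_{\mc S^p}$, where $\phi_j$ is the conformal factor of the (flat, one--dimensional) base metric, normalized so that $\phi_j\equiv 1$ at the initial time of $g_j$; the curvature-component formulas of Appendix~\ref{multiply-warped}, together with the explicit bounds on $\gamma_\alpha$, $\chi_\alpha$ and $\rho$ from Theorem~\ref{main} and the evolution equations~\eqref{eq:evolve-base}--\eqref{eq:evolve-fiber}, control all $(y,t)$--derivatives of $\phi_j$ and $u_{\alpha j}$ through $\Rm[g_j]$ and its derivatives by the standard bounded--geometry bootstrap in these fixed coordinates, the lower bound $u_{\alpha j}\ge -t$ being precisely what prevents degeneration and allows the relations to be inverted. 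In particular $\partial_t\log\phi_j$ is controlled by $\sum_\alpha n_\alpha\,u_\alpha^{-1/2}\cv^2_{\mc B}(u_\alpha^{1/2})$, hence by $u_\alpha^{-1}\chi_\alpha^{1/2}+u_\alpha^{-2}\gamma_\alpha$; since $\chi_\alpha,\gamma_\alpha\lesssim v_\alpha^2$ near spatial infinity while $v_\alpha(x_j)\to 0$, integrating from the initial time shows $\phi_j\to 1$ in $C^\infty_{\mathrm{loc}}$ after rescaling. Thus $\{\phi_j\}$ and $\{u_{\alpha j}\}$ are precompact in $C^\infty_{\mathrm{loc}}\big(\mathbb{R}\times\mc S^p\times\mc S^p\times(-\infty,0)\big)$.

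Third, after passing to a subsequence we have $\phi_j\to 1$ and $u_{\alpha j}\to u_{\alpha\infty}$ in $C^\infty_{\mathrm{loc}}$ for smooth positive limits $u_{\alpha\infty}$. Let $\Psi_j\colon\mathbb{R}\times\mc S^p\times\mc S^p\to\mc M$ be the diffeomorphism that rescales and translates the base coordinate so that $\Psi_j^*g_j(t)=\phi_j\,dy^2+u_{1j}\,g_{\mc S^p}+u_{2j}\,g_{\mc S^p}$ and sends its basepoint to $x_j$. The previous two steps give $\Psi_j^*g_j(t)\to dy^2+u_{1\infty}(y,t)\,g_{\mc S^p}+u_{2\infty}(y,t)\,g_{\mc S^p}$ in $C^\infty_{\mathrm{loc}}$ on $\mathbb{R}\times\mc S^p\times\mc S^p\times(-\infty,0)$, which exhibits this warped product as a pointed $C^\infty$ Cheeger--Gromov limit of $(\mc M,g_j(t),x_j)$. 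Since such limits are unique up to pointed isometry, it coincides with the limit $(\mc M_\infty,g_\infty(t),o)$ extracted above, so $\mc M_\infty=\mathbb{R}\times\mc S^p\times\mc S^p$ and $g_\infty(y,t)=dy^2+u_{1\infty}(y,t)\,g_{\mc S^p}+u_{2\infty}(y,t)\,g_{\mc S^p}$, as claimed. (Passing to the limit in~\eqref{eq:evolve-fiber} moreover shows that $u_{\alpha\infty}$ solves the corresponding equation for $g_\infty$.)

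The main obstacle is the second step: turning the Type-I and Shi curvature bounds on the rescaled flows into genuine $C^\infty$ bounds on the warping functions, and in particular showing that the rescaled base metric is asymptotically Euclidean. The lower bound $u_{\alpha j}\ge -t$ --- equivalently, the $\kappa$-noncollapsing used to produce the Cheeger--Gromov limit in the first place --- is exactly what keeps the fiber directions nondegenerate and lets the curvature-component formulas be inverted; everything else is bookkeeping with the formulas of the Appendices together with the a priori estimates of Theorem~\ref{main} and Proposition~\ref{uniform}, plus an appeal to uniqueness of Cheeger--Gromov limits.
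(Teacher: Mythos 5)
Your proof is correct and takes essentially the same route as the paper: uniform two-sided bounds on $u_{\alpha j}$ at the basepoint (from Theorem~\ref{main}, the normalization~\eqref{eq:define-c-alpha}, and $\limsup_j\lambda_j(a_*-t_j)<\infty$), propagated over compact $y$-sets by the scale-invariant gradient bound $|\nabla\log v_\alpha|^2\le C_0$, and then combined with the smooth Cheeger--Gromov convergence to identify the limit as a doubly-warped product. The only difference is one of packaging: the paper stops at $C^{0,\mu}$ precompactness of the $v_{\alpha j}$ and lets the smooth convergence of the metrics supply the smoothness of $u_{\alpha\infty}$, whereas you push for full $C^\infty_{\mathrm{loc}}$ bounds via a Shi-estimate bootstrap and track a base conformal factor $\phi_j$ (unnecessary for the one-dimensional base, which stays flat and can be kept in arclength coordinates, but harmless).
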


\begin{proof}
Recall that by Theorem \ref{main}, we have
\[c_*\,\delta_{\alpha}(x_j + y\lambda_j^{-1}, 0) \le  v_{\alpha}(x_j + y\lambda_j^{-1},a_* + t\lambda_j^{-1}) \le C_* \delta_{\alpha}(x_j+y\lambda_j^{-1},0)\]
for uniform constants $c_*, C_*$. Putting $y = 0=o$ as above, this yields
\[c_* \lambda_j \delta_{\alpha}(x_j,0) \le v_{\alpha j}(0,t) \le C_* \lambda_j \delta_{\alpha}(x_j,0).\]
Because $\lim_{j\to\infty} \frac{\delta_{\alpha}(x_j)}{a_* - t_j} = c_{\alpha} \in (0,\infty)$ and
$0 < \limsup_{j\to\infty} \lambda_j (T- t_j) < \infty$, we immediately get
\begin{equation}
\label{eq-uniform-bd-point}
c_0 \le v_{\alpha j}(0,t) \le C_0
\end{equation}
for all $t\in (-a_* \lambda_j, 0)$, for uniform constants $0 < c_0 < C_0 < \infty$.

On the other hand, by Theorem \ref{main}, we also have
\[|\nabla v_{\alpha}(x,t)|^2 \le C_0 \, G\big(v_{\alpha}(x,t)\big)\qquad \mbox{for all} \qquad t\in [0,a_*),\]
where $C_0$ is another uniform constant. This in particular implies that
\begin{equation}
\label{eq-grad-bd-conv}
|\nabla \log u_\alpha(x, t)|^2 \leq |\nabla \log v_{\alpha}(x,t)|^2 \le C_0 \qquad \mbox{on} \qquad\mc M \times [0,a_*).
\end{equation}
Estimates \eqref{eq-uniform-bd-point} and \eqref{eq-grad-bd-conv} imply that $v_{\alpha j}(y,t)$ converges uniformly 
to $v_{\alpha\infty}(y,t)$ on compact sets of $\mc M\times [-a_*\lambda_j, 0)$ as $j\to\infty$, in a $C^{0,\mu}$ norm,
for some $\mu\in (0,1)$. This together with a smooth Cheeger--Gromov convergence implies the claim. 
\end{proof}

\begin{claim}
\label{claim-constant-soliton}
For $\alpha\in\{1,2\}$ and every $t\in (-\infty,0)$, both $u_{\alpha\infty}(y,t)$ are constant in space.
\end{claim}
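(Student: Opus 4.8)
The plan is to show that the spatial gradient of each $u_{\alpha\infty}$ vanishes identically, by exploiting the fact that the parabolic rescalings used to form the blowup limit force the (already bounded) logarithmic gradient of the warping functions to decay to zero. First I would recall from Theorem~\ref{main} that $\gamma_\alpha = |\cv v_\alpha|^2 \le C_{\init}\big(1 + C_* t\, G_\alpha(v_\alpha)/v_\alpha^2\big)\,G_\alpha(v_\alpha)$ and that $G_\alpha(s)/s^2 \le \|G_\alpha\|_{\mc G} \le C_{\init}$; hence $|\cv \log v_\alpha|^2 = \gamma_\alpha/v_\alpha^2 \le C_0$ for a uniform constant $C_0$ on all of $\mc M \times [0, a_*)$. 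Since $u_\alpha = (a_\alpha-\mu_\alpha t) + v_\alpha \ge v_\alpha$ and $\cv u_\alpha = \cv v_\alpha$, this gives $|\cv \log u_\alpha|^2_{g} \le |\cv \log v_\alpha|^2_g \le C_0$, where the norm is with respect to the original metric $g(t)$.

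Next I would track this estimate under the rescaling $g_j(t) = \lambda_j\, g(a_* + t\lambda_j^{-1})$ defining $(\mc M_\infty, g_\infty(t), o)$. Because $|\cv f|^2$ scales like $\lambda^{-1}$ under $g \mapsto \lambda g$ and is unchanged by reparametrization, one has $|\cv \log u_{\alpha j}|^2_{g_j} = \lambda_j^{-1}\,|\cv \log u_\alpha|^2_g \le C_0\,\lambda_j^{-1}$ at every point of $\mc M \times (-a_*\lambda_j, 0)$. Equivalently, for each $t$, the function $\log u_{\alpha j}(\cdot, t)$ is Lipschitz with respect to $d_{g_j(t)}$ with constant at most $C_0^{1/2}\lambda_j^{-1/2} \to 0$.

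To conclude, fix $t \in (-\infty, 0)$ and $y_1, y_2 \in \mc M_\infty$; for $j$ large these lie in the domains of the diffeomorphisms realizing the Cheeger--Gromov convergence, and their $d_{g_j(t)}$-distances stay bounded (converging to $d_{g_\infty(t)}(y_1,y_2)$). By Claim~\ref{claim-structure}, $u_{\alpha j} \to u_{\alpha\infty}$ uniformly on the relevant compact set, so
\[
\big|\log u_{\alpha\infty}(y_1, t) - \log u_{\alpha\infty}(y_2, t)\big| = \lim_{j\to\infty}\big|\log u_{\alpha j}(y_1, t) - \log u_{\alpha j}(y_2, t)\big| = 0,
\]
and hence $u_{\alpha\infty}(\cdot, t)$ is constant in space for every $t$. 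I expect the only delicate point to be this passage to the limit: one must know that $u_{\alpha j}$ genuinely converges to $u_{\alpha\infty}$ (the content of Claim~\ref{claim-structure}, which uses the Type-I bound~\eqref{eq-curv-bound-j} and Shi's estimates for smooth convergence) and that the $d_{g_j(t)}$-distance between fixed limit points remains controlled. Working at the Lipschitz rather than the $C^1$ level, as above, avoids having to differentiate the convergence and makes this step essentially immediate.
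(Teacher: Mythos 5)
Your proposal is correct and follows essentially the same route as the paper: both arguments rescale the uniform bound $|\cv\log u_\alpha|^2_g\le C_0$ of~\eqref{eq-grad-bd-conv} to see that the $g_j$-gradient of $\log u_{\alpha j}$ is $O(\lambda_j^{-1/2})$ and then pass to the limit using the convergence established in Claim~\ref{claim-structure}. Your Lipschitz-level passage to the limit (rather than invoking smooth convergence to differentiate the limit) is a minor, harmless variation.
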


\begin{proof}
We fix any $t\in(-\infty,0)$, and let $t_j=a_*+t\lambda_j^{-1}$. 
 Then we observe that estimate~\eqref{eq-grad-bd-conv} scales as follows:
  \begin{align*}
    \lambda_j^2 |\nabla_{g_j(t_j)} \log u_{\alpha j}|^2 \leq C_0.
  \end{align*}
Taking $j \to \infty$ and using the smooth convergence of the metrics proves that $\log u_{\alpha \infty}$ is constant in space.
\end{proof}

To finish the proof of the first part of Theorem \ref{thm-spatial-infinity},
we need to show that the limit $(\mc M, g_{\infty}(\cdot,t),o)$ cannot be a gradient shrinking Ricci soliton
if
\[
\lim_{|x|\rightarrow\infty}\frac{\delta_1(x)}{\delta_2(x)}=\eta\notin\{0,1\}
\]
\underline{and} if the spacetime sequence $(x_j,t_j)$ is such that the constants $\lim_{j\to\infty}\frac{\delta_{\alpha}(x_j)}{a_*- t_j} = c_{\alpha}$
defined in~\eqref{eq:define-c-alpha} satisfy $c_1 = c_2 \eta$ with $\eta\in \mathbb{R}_+\backslash\{0,1\}$.
Using Proposition~\ref{uniform} and~\eqref{eq-as-long-as}, we have
\begin{equation}
\label{eq-ratio-u1u2}
\begin{split}
\frac{u_{1j}(0,-(a_*-t_j)\lambda_j)}{u_{2j}(0,-(a_*-t_j)\lambda_j)} &= \frac{u_1(x_j,t_j)}{u_2(x_j,t_j)} \\
&= \frac{a_*-t_j + (1 + o(1, \delta_1(x_j)\searrow 0)\, \delta_1(x_j)}{a_*-t_j + (1 + o(1, \delta_2(x_j)\searrow 0)\, \delta_2(x_j)} \\
&= \frac{1 + (1 + o\Big(1, \delta_1(x_j)\searrow 0)\Big)\, \frac{\delta_1(x_j)}{a_*-t_j}}{1 + (1 + o\Big(1, \delta_2(x_j)\searrow 0)\Big)\, \frac{\delta_2(x_j)}{a_*-t_j}}.
\end{split}
\end{equation}
Recall that  $\lim_{j\to\infty} (a_* - t_j) \lambda_j = -t_0 \in (0,\infty)$.   We let $j\to\infty$ in \eqref{eq-ratio-u1u2} to obtain
\[\frac{u_{1\infty}(0,t_0)}{u_{2\infty}(0,t_0)} = \frac{1 + c_2\eta}{1 + c_2} \neq 1.\]
In particular, by Claim \ref{claim-constant-soliton}, we have
\begin{equation}
\label{eq-ratio-not-one}
\frac{u_{1\infty}(y,t_0)}{u_{2\infty}(y,t_0)} = \frac{1 + c_2\eta}{1 + c_2} \neq 1 \qquad \mbox{for all} \qquad y\in \mathbb{R}.
\end{equation}

\begin{claim}
At no time $t\in (-\infty,0)$ is $g_{\infty}(\cdot,t)$ a gradient shrinking Ricci soliton.
\end{claim}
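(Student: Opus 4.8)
The plan is to argue by contradiction, exploiting the fact that, thanks to Claim~\ref{claim-constant-soliton}, the limit $g_\infty(\cdot,t)$ is an honest Riemannian product at each time. Suppose that for some $t\in(-\infty,0)$ there were a smooth potential $f$ on $\mc M_\infty$ and a constant $\lambda>0$ with $\Rc[g_\infty(\cdot,t)]+\cv^2 f=\lambda\,g_\infty(\cdot,t)$ (this includes the canonical-form case $\lambda=\tfrac{1}{2(T-t)}$). Since $u_{1\infty}(\cdot,t)$ and $u_{2\infty}(\cdot,t)$ are constant in space, $g_\infty(\cdot,t)$ splits isometrically as $(\mb R,\mr dy^2)\times(\mc S^p,u_{1\infty}(t)g_{\mc S^p})\times(\mc S^p,u_{2\infty}(t)g_{\mc S^p})$ with the $u_{\alpha\infty}(t)$ now genuine constants.

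The key step is then to restrict the soliton equation to a slice $S_\alpha:=\{y\}\times\{z_\beta\}\times\mc S^p$ (with $\beta\neq\alpha$), which is totally geodesic in the product. Along $S_\alpha$ the ambient Ricci tensor restricts to the intrinsic Ricci tensor $\Rc[u_{\alpha\infty}(t)g_{\mc S^p}]=\tfrac12 g_{\mc S^p}$ (scale-invariance of $\Rc$ together with the normalization $2\Rc[g_{\mc S^p}]=g_{\mc S^p}$), and the ambient Hessian restricts to the intrinsic Hessian of $\phi:=f|_{S_\alpha}$; hence $\tfrac12 g_{\mc S^p}+\cv^2_{u_{\alpha\infty}(t)g_{\mc S^p}}\phi=\lambda\,u_{\alpha\infty}(t)\,g_{\mc S^p}$. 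Tracing with respect to $u_{\alpha\infty}(t)g_{\mc S^p}$ and integrating over the closed manifold $\mc S^p$ annihilates the Laplacian term (since $\int_{\mc S^p}\Delta\phi=0$), leaving $\tfrac{1}{2u_{\alpha\infty}(t)}=\lambda$, i.e. $u_{\alpha\infty}(t)=\tfrac{1}{2\lambda}$. Applying this with $\alpha=1$ and $\alpha=2$ forces $u_{1\infty}(\cdot,t)=u_{2\infty}(\cdot,t)$, contradicting~\eqref{eq-ratio-not-one}.

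To obtain the conclusion at \emph{every} $t\in(-\infty,0)$ rather than only at the distinguished time $t_0$ of~\eqref{eq-ratio-not-one}, I would observe that on the product each spherical factor evolves by $\partial_t(u_{\alpha\infty}(t)g_{\mc S^p})=-2\Rc[g_\infty(t)]=-g_{\mc S^p}$, so $u_{\alpha\infty}(t)=u_{\alpha\infty}(t_0)-(t-t_0)$ and the difference $u_{1\infty}(\cdot,t)-u_{2\infty}(\cdot,t)$ is independent of $t$, hence nonzero for all $t$ by~\eqref{eq-ratio-not-one}; thus the contradiction recurs at each $t$. I do not expect a serious obstacle here: the only points needing care are the appeal to Claim~\ref{claim-constant-soliton} to pass to a genuine Riemannian product and the elementary facts that along a totally geodesic submanifold of a Riemannian product the ambient Ricci tensor and the ambient Hessian restrict to the corresponding intrinsic quantities. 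No new estimates are required beyond those already furnished by Theorem~\ref{main} and Proposition~\ref{uniform}.
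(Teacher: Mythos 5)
Your proof is correct, but it takes a genuinely different route from the paper's. The paper invokes the \textsc{ode} system~\eqref{eq-ODE} from~\cite{AK19} characterizing gradient shrinking solitons on doubly-warped products, and reads off that the only spatially constant solutions have $\vp_1^2=(p_1-1)/(-\lambda)$ and $\vp_2^2=(p_2-1)/(-\lambda)$, which with $p_1=p_2$ forces $u_{1\infty}=u_{2\infty}$, contradicting~\eqref{eq-ratio-not-one}. You instead give a self-contained argument: since Claim~\ref{claim-constant-soliton} makes $g_\infty$ a genuine Riemannian product, you restrict the soliton identity to a totally geodesic spherical slice (where the ambient Ricci and Hessian restrict to their intrinsic counterparts), trace, and integrate over the closed fiber to kill the Laplacian of the potential, obtaining $u_{\alpha\infty}=1/(2\lambda)$ for each $\alpha$. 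This buys two things: it avoids the external citation and any need to classify warped-product solitons, and it makes no use of the warped-product ansatz for the potential (the paper's route implicitly assumes $X=f(y)\,\partial_y$, whereas your integration argument handles an arbitrary smooth potential on $\mc M_\infty$). You also explicitly propagate $u_{1\infty}(t)\neq u_{2\infty}(t)$ from $t_0$ to all $t\in(-\infty,0)$ via $\dot u_{\alpha\infty}=-1$, a point the paper's proof leaves implicit since~\eqref{eq-ratio-not-one} is stated only at $t_0$. All the individual steps you flag (restriction of Ricci and Hessian along a factor of a product, $\int_{\mc S^p}\Delta\phi=0$) are standard and correct.
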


\begin{proof}
Recall that a gradient shrinking soliton is a metric $g$ that satisfies
\[
-2\Rc+\mc L_X(g) = \lambda g,
\]
where $X$ is the gradient vector field of a potential function and  $\lambda<0$.
It is shown in~\cite{AK19} that a metric
\[
g=(\mr dy)^2 + \vp_1(y)^2g_{\mc S^{p_1}} + \vp_2(y)^2g_{\mc s^{p_2}}
\]
on a doubly-warped product $\mb R\times\mc S^{p_1}\times\mc S^{p_2}$
is a gradient shrinking soliton with vector field $X=f(y)\,\frac{\partial}{\partial y}$ if and only if
the functions $f,\vp_1,\vp_2$ satisfy the \textsc{ode} system
\begin{subequations}	\label{eq-ODE}
\begin{align}
f_y &= p_1\frac{(\vp_1)_{yy}}{\vp_1}+p_2\frac{(\vp_2)_{yy}}{\vp_2}-\lambda,\\
\frac{(\vp_1)_{yy}}{\vp_1} &= (p_1-1)\frac{1-(\vp_1)_y^2}{\vp_1^2} -
p_2\frac{(\vp_1)_y(\vp_2)_y}{\vp_1\vp_2}
+ \frac{(\vp_1)_y}{\vp_1}f+\lambda,\\
\frac{(\vp_2)_{yy}}{\vp_2} &=(p_2-1)\frac{1-(\vp_2)_y^2}{\vp_2^2} -
p_1\frac{(\vp_1)_y(\vp_2)_y}{\vp_1\vp_2} + \frac{(\vp_2)_y}{\vp_2}f +\lambda.
\end{align}
\end{subequations}
The only solutions of this system with $\vp_1$ and $\vp_2$ constant in space are
\[
f(y)=-\lambda y,\qquad
\vp_1^2=\frac{p_1-1}{-\lambda},\quad\mbox{ and }\quad \vp_2^2=\frac{p_2-1}{-\lambda}.
\]
So if $g_{\infty} = dy^2 + u_{1\infty} g_{S^p} + u_{2\infty} g_{S^p}$ were a gradient shrinking Ricci soliton at some 
$t\in(\-\infty,0)$, then the constants $u_{1\infty}$ and $u_{2\infty}$ would have to be equal. But this
contradicts~\eqref{eq-ratio-not-one}.
\end{proof}
\medskip

We now prove the remainder of Theorem \ref{thm-spatial-infinity} and Corollary~\ref{necessary-and-sufficient}
by obtaining necessary and sufficient conditions for a limit to be a gradient shrinking soliton.

If $\lambda_j \to \infty$ is such that  $\lim_{j\to\infty} \lambda_j (a_* - t_j) = -t_0 > 0$, 
estimate~\eqref{eq-ratio-u1u2} implies that
\begin{equation}	\label{eq-ratio-one}
\frac{u_{1\infty}(0,t_0)}{u_{2\infty}(0,t_0)} = 1
\end{equation}
if and only if $(x_j,t_j)$ is a sequence converging to $(\infty, a_*)$ such that
\begin{equation}	\label{eq:zero-limit}
\lim_{j\to\infty}\frac{\delta_{\alpha}(x_j)}{a_* - t_j} = 0\qquad\mbox{ for }\quad \alpha\in \{1,2\},
\end{equation}
in contrast to~\eqref{eq:define-c-alpha}.

If~\eqref{eq:zero-limit} holds, then Claim~\ref {claim-constant-soliton} implies that for every $t\in (-\infty,0)$,
we have $u_{1\infty}(y,t) = u_{2\infty}(y,t) = u(t)$, where $u$ depends only on time.
Thus $u_{1\infty} = u_{2\infty} = -\lambda (p -1)$ and $f(y) = -\lambda y$ satisfy the system~\eqref{eq-ODE},
implying that the metric
$g_{\infty} = dy^2 + u_{1\infty}\, g_{S^p} + u_{2\infty}\, g_{S^p}$ is a gradient shrinking Ricci soliton. 
\smallskip

Finally, since we have the bounds $\rho + \sum_{\alpha = 1}^Au_\alpha^{-1}\chi_\alpha + u^{-2}_\alpha\gamma_\alpha \leq C_0$ for all $t\in [0,a_*)$, the curvature estimate~\eqref{eq:curv-near-cyl} implies that
\begin{equation}
\label{eq-Rm-sup}
\sup_{\mc M} |\Rm[g(t)]| = \sup_{\mc M} \frac{\mu_{\alpha}}{2(p-1) u_{\alpha}} = \frac{\mu_{\alpha}}{2(p-1)(a_*-t)}.
\end{equation}
Note that to obtain the last identity, we use
Proposition~\ref{uniform} and the fact that $\lim_{|x|\to\infty}\delta_{\alpha}(x) = 0$.
On the other hand, Proposition \ref{uniform} also implies that
\[v_{\alpha}(x_j,t_j) = \Big(1 + o(1; \delta_{\alpha}(x_j)\searrow 0)\Big)\, \delta_{\alpha}(x_j).\]
Combining this with \eqref{eq:curv-near-cyl}, we find that
\begin{equation}
\label{eq-Rm-j}
|\Rm(x_j,t_j)| =
\frac{\left( 1 + o(1) \right)}{a_* - t_j}
\frac{\mu_{\alpha}}{2(p-1)\Big(1 + (1 + o(1; \delta_{\alpha}(x_j)\searrow 0) \frac{\delta_{\alpha}(x_j)}{a_*-t_j}\Big)} + O(1).
\end{equation}
Thus \eqref{eq-Rm-sup} and \eqref{eq-Rm-j} imply that
\[
\lim_{j\to\infty}\frac{|\Rm(x_j,t_j)|}{\sup_{\mc M} |\Rm(\cdot,t_j)|} = 1
\]
if and only if~\eqref{eq:zero-limit} holds. We have seen above that~\eqref{eq:zero-limit} is equivalent to~\eqref{eq-ratio-one},
and that by Claim~\ref{claim-constant-soliton}, \eqref{eq-ratio-one} is equivalent to $g_{\infty}$ being a gradient shrinking Ricci soliton.
This concludes the proof of Theorem~\ref{thm-spatial-infinity} and verifies Corollary~\ref{necessary-and-sufficient}.
\end{proof}

\subsection{Weak stability of generalized cylinders under Ricci flow}

Stability of generalized cylinders $\mathbb{R}^k\times \mc S^p$ under Ricci flow is a subtle question. Even though a round cylinder $\mathbb{R}^k\times \mc S^p$ is expected to be a stable singularity model in some sense, it is not immediately clear how to define this stability.
One reason for this is the following example. Start with a cylindrical metric
$g_{\cyl} = (\mr dx)^2 + g_{\mc S^p}$ on $\mathbb{R}\times \mc S^p$ with $p\geq2$.
Let $T_0$ denote the time at which the spherical fibers vanish. Now consider an $\epsilon$-perturbation of
the initial data $g_{\cyl}$:
 an initial metric $g_{\epsilon} = (\mr dx)^2 + (1+\epsilon)\, g_{\mc S^p}$ with $|\epsilon|<1$.
Ricci flow originating from $g_{\epsilon}$ will also become singular but at a different singularity time.
If we rescale the perturbed flow by $\frac{1}{T_0 - t}$, then the rescaled perturbed solution will encounter a
singularity before $T_0$ if $\epsilon<0$ or will become infinitely large as $t\nearrow T_0$ if $\epsilon>0$.
In other words, no matter how small a perturbation is, if we chose a cylinder of a different radius, it will not naturally
converge after rescaling to the solution originating at $g_{\cyl}$.
\medskip

Now let $g_{\mr{Eucl}}$ denote the flat Euclidean metric on $\mb R^k$, and let $\mc S^p$ be a round sphere scaled so that 
$2\Rc_{g_{\mc S^p}} = g_{\mc S^p}$. We take as initial data
$g_{\cyl}(0) = g_{\mr{Eucl}} + a_* g_{\mc S^p}$.
Then $g_{\cyl}(t) = g_{\mr{Eucl}} + (a_*-t)\, g_{\mc S^p}$
is a generalized cylinder that solves Ricci flow up to time $a_*>0$.
Consider perturbed initial data
\begin{equation}	\label{eq:perturbed-cylinder}
g(x,0) = g_{\mr{Eucl}} + u(x,0) g_{\mc S^p},
\end{equation}
where $u(x,0) = a_* + \delta(x)$, with $\delta(x) \in \mathcal{A}$ as defined in~\eqref{eq-A}.

\begin{theorem}
\label{cor-stability}
Let $g(x,t)$ be a Ricci flow solution on $\mb R^k\times S^p$ with initial metric $g(x,0)$ given in~\eqref{eq:perturbed-cylinder}.
Then there exists a constant $C_*$ depending only on $a_*$ so that for all $x\in\mb R^k$ and all $t\in [0,a_*)$, one has
\begin{equation}
\label{eq-concl1}
\begin{split}
\frac{1}{C_*}\, \delta(x) &= \frac{1}{C_*}\, |g(x,0) - g_{\cyl}(0)|_{g_{\cyl}(0)}\\
&\le \sup_{t\in[0,a_*)}|g(x,t) - g_{\cyl}(t)|_{g_{\cyl}(0)} \\
&\le C_* |g(x,0) - g_{\cyl}(0)|_{g_{\cyl}(0)} = C_* \, \delta(x),
\end{split}
\end{equation}
and
\begin{equation}
\label{eq-concl2}
\begin{split}
\sup_{t\in[0,a_*)} |g(x,t) - g_{\cyl}(t)|_{g_{\cyl}(0)}
&= \Big(1 + o(1; \delta(x)\searrow 0)\Big)\,|g(x,0) - g_{\cyl}(0)|_{g_{\cyl}(0)} \\
&= \Big(1 + o(1; \delta(x)\searrow 0)\Big)\,\delta(x),
\end{split}
\end{equation}
which implies that the $g_{\cyl}(0)$-distance between a perturbed solution $g(x,t)$
and an evolving generalized cylinder $g_{\cyl}(t)$ approaches zero as $|x|\to\infty$, uniformly in time $t\in [0,a_*)$. 
Moreover, the flow $g(x,t)$ develops a Type-I singularity at spatial infinity as $t \nearrow a_*$. 
\end{theorem}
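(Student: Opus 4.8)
The plan is to deduce Theorem~\ref{cor-stability} from the results already proved for multiply-warped products, specialized to the singly-warped case $A=1$ with flat base $\mc B=\mb R^k$, spherical fiber $\mc F=\mc S^p$, Ricci constant $\mu=1>0$, and warping function $u(x,0)=a_*+\delta(x)$, so that $a_1=a_*$ and $v_1(\cdot,0)=\delta$. Since $\delta\in\mc A$, Definition~\ref{def-adm-pert} guarantees that $g_{\init}$ from~\eqref{eq:perturbed-cylinder} satisfies the Main Assumptions and that, in addition, $G_1(s)/s^2=o(1;s\searrow0)$, i.e.\ hypothesis~\eqref{eq:decay} of Proposition~\ref{uniform} holds. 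First I would invoke the proof of Corollary~\ref{shrink}: for $a_*>0$ small enough that $a_*\le C_*^{-1}$, the estimates~\eqref{eq:main-estimates} of Theorem~\ref{main} hold on the entire existence interval $[0,T_{\sing})=[0,a_*)$, there are no finite singular points, and the singularity is Type-I and occurs at spatial infinity. This is exactly the last assertion of the theorem, so only the distance estimates~\eqref{eq-concl1} and~\eqref{eq-concl2} remain.

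The second step is a decomposition. Because $g_{\cyl}(0)=g_{\mr{Eucl}}+a_*g_{\mc S^p}$ is a Riemannian product, the base and fiber blocks are $g_{\cyl}(0)$-orthogonal, and for the solution $g(x,t)=g_{\mc B}(x,t)+\big((a_*-t)+v_1(x,t)\big)g_{\mc S^p}$ one has $|g(x,t)-g_{\cyl}(t)|_{g_{\cyl}(0)}^2=|g_{\mc B}(x,t)-g_{\mr{Eucl}}|_{g_{\mr{Eucl}}}^2+c_p\,v_1(x,t)^2$, where $c_p=p/a_*^2>0$ is a dimensional constant (absorbed into $C_*$) coming from $|v_1\,g_{\mc S^p}|_{g_{\cyl}(0)}^2=c_p\,v_1^2$. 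The lower bounds in~\eqref{eq-concl1} and~\eqref{eq-concl2} are then immediate, since the supremum over $t$ includes $t=0$, where the right-hand side equals $c_p^{1/2}\delta(x)=|g(x,0)-g_{\cyl}(0)|_{g_{\cyl}(0)}$. For the fiber contribution to the upper bounds, the first claim of Proposition~\ref{uniform} gives $v_1(x,t)\le C_*v_1(x,0)=C_*\delta(x)$ uniformly in $t\in[0,a_*)$, while its second claim --- applicable because~\eqref{eq:decay} holds --- sharpens this to $v_1(x,t)=\big(1+o(1;\delta(x)\searrow0)\big)\delta(x)$.

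The hard part will be controlling the base block $|g_{\mc B}(x,t)-g_{\mr{Eucl}}|_{g_{\mr{Eucl}}}$, which Proposition~\ref{uniform} does not address directly and which must be recovered by integrating the base evolution equation~\eqref{eq:evolve-base}. Since $g_{\mc B}(x,0)=g_{\mr{Eucl}}$ and $u^{-1/2}\cv^2(u^{1/2})=\tfrac12u^{-1}\cv^2 v_1-\tfrac14u^{-2}\cv v_1\ten\cv v_1$, one has $g_{\mc B}(x,t)-g_{\mr{Eucl}}=-\int_0^t\big(2\Rc[g_{\mc B}]+2p\,u^{-1/2}\cv^2(u^{1/2})\big)(x,s)\,ds$, with integrand bounded by $C_N\big(\rho^{1/2}+u^{-1}\chi_1^{1/2}+u^{-2}\gamma_1\big)$. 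Using Theorem~\ref{main} together with $u=(a_*-s)+v_1\ge v_1$, the definition~\eqref{eq-H} of $H_1$, and Proposition~\ref{uniform} (so $v_1(x,s)\simeq\delta(x)$), one gets $u^{-2}\gamma_1\lesssim G_1(v_1)/v_1^2$ and $u^{-1}\chi_1^{1/2}\lesssim u^{-1}H_1(v_1)^{1/2}=u^{-1}G_1(v_1)/v_1$, both $o(1)$ as $|x|\to\infty$. The remaining ingredient is a bound for $\rho$: since $\rho(x,0)\equiv0$, inserting the bounds just obtained into~\eqref{eq-rho} yields $\heat\rho\le C'(\rho W+VW)-|\cv\rho|^2/\rho$ with $W$ constant and $V$ a positive multiple of $\big(G_1(v_1)/v_1^2\big)^2$ (whose $\parexp{\cdot}$ is controlled by~\eqref{eq:easycalc} and the boundedness of $\polyd{G_1/s^2}$), so Lemma~\ref{lem:Tim-17} applies and, because $\sup_y\rho(y,0)=0$, gives $\rho(x,t)\lesssim t\,\big(G_1(v_1(x,t))/v_1(x,t)^2\big)^2$. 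The main obstacle, and where the delicate accounting lies, is to combine these pieces so that the base drift is not merely $o(1)$ but is dominated by $\delta(x)$ --- and, for~\eqref{eq-concl2}, is $o(\delta(x))$: one uses that $u^{-1}=(a_*-s+v_1)^{-1}$ is integrable on $[0,a_*)$ and that the only regime in which the forcing is as large as a fixed multiple of $\delta(x)$ is the short time window $a_*-s\lesssim v_1\simeq\delta(x)$, whose length is $\lesssim\delta(x)$, so its contribution to the time integral is $\lesssim\delta(x)^2\log(1/\delta(x))=o(\delta(x))$, while away from it $u\gtrsim1$ and the forcing is already $o(\delta(x))$; the passage from the $g(t)$-norms appearing in these estimates to the fixed Euclidean norm is harmless once $g_{\mc B}(\cdot,t)$ is known to be uniformly equivalent to $g_{\mr{Eucl}}$.

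Finally I would assemble the pieces: with the base block shown to be $o(\delta(x))$ (and in any case $\lesssim\delta(x)$) and the fiber block equal to $c_p^{1/2}\big(1+o(1;\delta(x)\searrow0)\big)\delta(x)$, the displayed orthogonal decomposition gives $\sup_{t\in[0,a_*)}|g(x,t)-g_{\cyl}(t)|_{g_{\cyl}(0)}=\big(1+o(1;\delta(x)\searrow0)\big)\,|g(x,0)-g_{\cyl}(0)|_{g_{\cyl}(0)}$, which is~\eqref{eq-concl2}, while~\eqref{eq-concl1} follows by taking $C_*$ to dominate $c_p^{1/2}$, its reciprocal, and the implied constants above; letting $|x|\to\infty$ in either estimate shows the $g_{\cyl}(0)$-distance tends to $0$ uniformly in $t$, and the Type-I-at-spatial-infinity statement was already obtained in the first step.
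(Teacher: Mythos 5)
Your second paragraph together with Proposition~\ref{uniform} is, in essence, the paper's entire proof: the authors identify $|g(x,t)-g_{\cyl}(t)|_{g_{\cyl}(0)}$ with the fiber deviation $v(x,t)$, quote Proposition~\ref{uniform} for $v(x,t)\le C_*\delta(x)$ and $v(x,t)=(1+o(1;\delta(x)\searrow0))\delta(x)$, and obtain the Type-I/spatial-infinity statement by repeating the arguments of Theorem~\ref{thm-spatial-infinity} (equivalently, of Corollary~\ref{shrink}), exactly as in your first step. You are right to observe that if the norm genuinely includes the base block, then something more is needed, since $g_{\mc B}$ drifts under~\eqref{eq:evolve-base}; the paper is silent on this point, and your orthogonal decomposition and the idea of integrating~\eqref{eq:evolve-base} in time are the natural way to address it.

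However, the base-block estimate does not close as written, and this is a genuine gap in your third paragraph. Writing $\epsilon(s):=G_1(s)/s^2$ (so $\epsilon(s)=o(1)$ as $s\searrow0$ but with no prescribed rate), the bounds of Theorem~\ref{main} give $u^{-2}\gamma_1\lesssim\epsilon(v_1)(v_1/u)^2$, $u^{-1}\chi_1^{1/2}\lesssim\epsilon(v_1)(v_1/u)$, and (with your improved bound) $\rho^{1/2}\lesssim t^{1/2}\epsilon(v_1)$. Integrating in time over $[0,a_*)$ with $u=(a_*-s)+v_1$ and $v_1\simeq\delta(x)$: the $\gamma$ term contributes $\lesssim\epsilon(\delta)\delta$, which is fine; but $\int_0^{a_*}u^{-1}\,ds\simeq\log(1/\delta)$, so the Hessian term contributes $\epsilon(\delta)\,\delta\log(1/\delta)$, which is $o(\delta)$ only under an extra rate assumption on $\epsilon$ (your claim that ``away from the window $u\gtrsim1$'' is not correct --- one only has $u\ge a_*-s$, which decays to $\sim\delta$); and the $\rho$ term contributes $O(\epsilon(\delta))$, which is $o(1)$ but in general neither $O(\delta)$ nor $o(\delta)$. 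For the explicit example $G(s)=s^3/(1+s)$ one has $\epsilon(s)\simeq s$ and everything works, but for a general admissible perturbation the Main Assumptions plus~\eqref{eq:decay} only yield a base drift of size $o(1;\delta(x)\searrow0)$, which is insufficient for the right-hand inequalities in~\eqref{eq-concl1} and for~\eqref{eq-concl2}. So you must either read the theorem as the authors implicitly do --- as a statement about the warping function alone, in which case your first two paragraphs already reproduce the paper's proof --- or supply an additional hypothesis or argument showing the base drift is $o(\delta(x))$.
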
  

\begin{remark}
We say that a solution $g(x,t)$ on $\mathbb{R}^k\times \mc S^p$ stays in a $\delta(x)$-neighborhood of $g_{\cyl}$
if there exists a uniform constant $C_*$ so that $\sup_{t\in[0,a_*)} |g(x,t) - g_{\cyl}(t)|_{g_{\cyl}(0)}$
is bounded by $C_*$ for all $x\in \mathbb{R}$ and all $t\in [0,a_*)$.
Note that for the admissible perturbations that we consider in Theorem~\ref{cor-stability},
the constant $C_*$ is universal: independent of the perturbation. Note also that Theorem~\ref{cor-stability}
implies that for admissible perturbations, the perturbed solution never leaves the $\delta(x)$-neighborhood of
$g_{\cyl}$.

On the other hand, \eqref{eq-concl1} implies that no matter how small $\delta(x) > 0$ may be,
after performing a Type-I rescaling by $\frac{1}{a_* - t}$ of both flows $g(x,t)$ and $g_{\cyl}(t)$, the rescaled solutions
$\tilde{g}$ and $\tilde{g}_{\cyl}$, respectively, have the property that $\tilde{g}(\cdot,\tau)$ does not converge to
$\tilde{g}_{\cyl}(\tau)$ as $\tau \to \infty$, where $\tau = -\log(a_* -t)$. This behavior is consistent with
the example discussed in the opening paragraph of this subsection.
\end{remark}

\begin{proof}[Proof of Theorem~\ref{cor-stability}]
By Proposition \ref{uniform}, if $g(x,t) = a_* - t + v(x,t)$, then there exists a constant $C_*(a_*)$ such that
$v(x,t) \le C_* \, \delta(x)$ and
\[v(x,t) = \Big(1 + o(1; \delta(x)\searrow 0)\Big)\, \delta(x)\]
for all $t\in [0,a_*)$. This implies \eqref{eq-concl1} and \eqref{eq-concl2}.
This further implies that the distance between the perturbed solution $g(x,t)$ and an evolving cylinder
$g_{\cyl}(t)$ approaches zero as $|x|\to\infty$, uniformly in time $t\in [0,a_*)$.

Arguments exactly like those that prove Theorem~\ref{thm-spatial-infinity}
establish that the perturbed solution $g(x,t)$ has the same singular time $a_*$ as the generalized
cylinder $g_{\cyl}(t)$; the singularity is Type-I; and it occurs at spatial infinity. 
\end{proof}

\appendix
\section{Curvatures of multiply-warped products}	\label{multiply-warped}

We begin by recalling classical formulas for the curvatures\footnote{Throughout this paper, we follow the
curvature conventions detailed in Sections~5--6 of~\cite{CK04}. Briefly, $R(X,Y)Z=\cv^2Z(X,Y)-\cv^2Z(Y,X)$
for the $(3,1)$-tensor, and we lower the raised index into the fourth position so that, say,
$R_{1221}>0$ on the round $2$-sphere.} of a simple  warped product $\mc B\times_u \mc F$.
Let $(\mc B, \check g)$ and $(\mc F,\hat g)$ be complete Riemannian manifolds. In this Appendix, unlike the rest of
this paper, we do not assume that $\mc F$ is a space form. Let $u\colon\mc B\rightarrow\mb R_+$ be
a smooth function. To facilitate working in local coordinates, we denote the metric on  $\mc B\times_u \mc F$
by $g=\check g + u \hat g$.
\smallskip

We begin by working in local coordinates, using lowercase Roman indices (\emph{e.g.,} $i,j,k,\ell$) on the base $\mc B$,
lowercase Greek indices (\emph{e.g.,} $\sigma,\tau,\nu,\omega$) on the fiber $\mc F$, and allowing capital Roman letters to range
over both sets. We denote the Christoffel symbols of $g$ by
\[
\Gamma_{IJ}^K= \frac12 g^{KL}(\pd_I g_{JL}+\pd_J g_{IL}-\pd_L g_{IJ}),
\]
and those of $\check g$ and $\hat g$ by $\check\Gamma_{ij}^k$ and $\hat\Gamma_{\sigma\tau}^\nu$,
respectively. We follow the same convention for other geometric quantities, including curvatures.
We order the Christoffel symbols by the number of vertical (Greek) indices that appear (in order: $0,1,2,3$) and calculate that
\begin{subequations}		\label{eq:Christoffel}
\begin{align}
\Gamma_{ij}^k&=\check\Gamma_{ij}^k,\label{eq:Christoffel:Base}\\ \notag \\
\Gamma_{\sigma j}^k &= \Gamma_{i\tau}^k = \Gamma_{ij}^\nu=0,\\ \notag \\
\Gamma_{\sigma\tau}^k&=-\frac12\,\check g^{k\ell}u^{-1}\,\pd_\ell u\,\left( u\hat g_{\sigma\tau}\right),\\
\Gamma_{i\tau}^\nu&=\frac12\,u^{-1}\,\pd_i u\,\delta_\tau^\nu,\\
\Gamma_{\sigma j}^\nu&=\frac12\,u^{-1}\,\pd_j u\,\delta_\sigma^\nu,\label{eq:Christoffel:BaseBottomRight}\\ \notag \\
\Gamma_{\sigma\tau}^\nu&=\hat\Gamma_{\sigma\tau}^\nu.
\end{align}
\end{subequations}

Given a function $f: \mc B \to \mb R$, there is a natural function $\tilde f:\mc B\times_u\mc F\to\mb R$
defined by $\tilde f(x,y)=f(x)$. We wish to compare the covariant Hessian of $\tilde f$ with respect to $g$
with that of $f$ with respect to $\check g$.

\begin{claim}
  \label{claim:hessf}
 If $f$ and $\tilde f$ are as above, then
\[
\nabla \nabla \tilde f = \check \nabla \nabla f + \big\lp\nabla (\log u^{1/2}),\cv f\big\rp\,(u g_{\mc F}).
\]
\end{claim}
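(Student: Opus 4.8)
The plan is to compute the components of $\nabla\nabla\tilde f$ directly in the local coordinates set up above, using $\nabla_I\nabla_J\tilde f = \partial_I\partial_J\tilde f - \Gamma_{IJ}^K\,\partial_K\tilde f$ together with the list of Christoffel symbols in~\eqref{eq:Christoffel}. The organizing observation is that $\tilde f$ depends only on the base point, so $\partial_i\tilde f = \partial_i f$ while $\partial_\sigma\tilde f = 0$; consequently, in every term $\Gamma_{IJ}^K\,\partial_K\tilde f$ only the summands with $K$ a base index survive, and on the right-hand side the tensor $u\,g_{\mc F}$ has only purely vertical components. This reduces the identity to three coordinate cases.

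First I would dispose of the horizontal and mixed blocks. For $(I,J) = (i,j)$ one has $\partial_i\partial_j\tilde f = \partial_i\partial_j f$ and, by~\eqref{eq:Christoffel:Base}, $\Gamma_{ij}^k = \check\Gamma_{ij}^k$ while $\Gamma_{ij}^\nu = 0$; hence $\nabla_i\nabla_j\tilde f = \check\nabla_i\check\nabla_j f$, which matches the first term on the right-hand side, the second term contributing nothing in horizontal directions. For $(I,J) = (i,\tau)$ one has $\partial_i\partial_\tau\tilde f = 0$, the symbol $\Gamma_{i\tau}^k$ vanishes, and $\Gamma_{i\tau}^\nu$ is paired only with $\partial_\nu\tilde f = 0$; so $\nabla_i\nabla_\tau\tilde f = 0$, again consistent with the right-hand side.

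The substantive case is the vertical block $(I,J) = (\sigma,\tau)$. Here $\partial_\sigma\partial_\tau\tilde f = 0$, the symbol $\Gamma_{\sigma\tau}^\nu = \hat\Gamma_{\sigma\tau}^\nu$ is paired only with $\partial_\nu\tilde f = 0$, and the sole surviving contribution is $-\Gamma_{\sigma\tau}^k\,\partial_k f$ with $\Gamma_{\sigma\tau}^k = -\tfrac12\,\check g^{k\ell}u^{-1}\,\partial_\ell u\,(u\hat g_{\sigma\tau})$. This yields
\[
\nabla_\sigma\nabla_\tau\tilde f
= \tfrac12\,u^{-1}\,\check g^{k\ell}\,\partial_\ell u\,\partial_k f\,(u\hat g_{\sigma\tau})
= \big\lp\nabla(\log u^{1/2}),\nabla f\big\rp\,(u\hat g_{\sigma\tau}),
\]
where I use $\nabla\log u^{1/2} = \tfrac12\,u^{-1}\nabla u$ and the fact that the $g$-inner product of two horizontal vectors coincides with the $\check g$-inner product, so that $\lp\nabla(\log u^{1/2}),\nabla f\rp$ is unambiguous. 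Assembling the three blocks gives the claimed formula. I do not expect any genuine obstacle here; the only point that needs care is the bookkeeping of which Christoffel terms are annihilated by $\partial_K\tilde f$, and the consistent identification of horizontal inner products with base inner products.
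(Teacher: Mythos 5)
Your proposal is correct and takes essentially the same route as the paper: a direct computation from the Christoffel symbols in~\eqref{eq:Christoffel}, with the paper organizing it via the difference $(\cv-\check\cv)$ contracted against the horizontal gradient $\cv f$ so that only~\eqref{eq:Christoffel:BaseBottomRight} survives, while you carry out the equivalent block-by-block bookkeeping explicitly. All three cases check out, including the key vertical block where $-\Gamma_{\sigma\tau}^k\,\partial_k f$ produces the $\big\lp\cv(\log u^{1/2}),\cv f\big\rp\,(u\hat g_{\sigma\tau})$ term.
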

\begin{proof}
  This is a straightforward application of~\eqref{eq:Christoffel}. We can write
  \begin{align*}
  \cv_I\cv_J \tilde f &= \check\cv_I\cv_J f + (\cv-\check\cv)_I\cv_J f\\
  &=\cv_i\cv_j f + g_{JK}(\cv-\check\cv)_{IL}^K\cv^L f.
  \end{align*}
  Since $\cv f$, is horizontal, the only quantity from~\eqref{eq:Christoffel} that appears in the last term above
  is~\eqref{eq:Christoffel:BaseBottomRight}, which proves the claim.
\end{proof}

We now compute the curvatures of $g$ at the origin of a coordinate system that is normal for $\check g$ and
$\hat g$, but not necessarily so for $g$. That is to say, we may assume that $\check\Gamma_{ij}^k=0$
and $\hat\Gamma_{\sigma\tau}^\nu=0$ at the origin, hence that $\pd_i \check g_{jk}=0$ and
$\pd_\sigma \hat g_{\tau\nu}=0$ there, but we must use the full formula
\[
R_{IJKL}=g_{LP}\left(\pd_I\Gamma_{JK}^P-\pd_J\Gamma_{IK}^P+\Gamma_{IQ}^P\Gamma_{JK}^Q-\Gamma_{JQ}^P\Gamma_{IK}^Q\right)
\]
to calculate the $(4,0)$-Riemann curvature tensor of $g$.  Again ordering formulas by the number of vertical indices that
appear (in order: $0,1,2,3,4$), we compute that
\begin{align*}
R_{ijk\ell}&=\check R_{ijk\ell},\\ \\ 
R_{\sigma jk\ell}&=R_{i\tau k\ell} = R_{ij\nu\ell}=R_{ijk\omega}=0,\\ \\
R_{\sigma\tau k\ell}&=0,\\
R_{i\tau\nu\ell}&=g_{p\ell}\big(\pd_i\Gamma_{\tau\nu}^p-\Gamma_{\tau\omega}^p\Gamma_{i\nu}^\omega\big)\\
&= (u \hat g_{\tau\nu})\Big(-\frac12u^{-1}\,\cv_i\cv_\ell u+\frac14\,u^{-2}\,\cv_i u\,\cv_\ell u\Big),\\\\
R_{\sigma\tau\nu\ell}&=0,\\ \\
R_{\sigma\tau\nu\omega}&=g_{\omega\lambda}\big(\hat R_{\sigma\tau\nu}^\lambda
+\Gamma_{\sigma m}^\lambda\Gamma_{\tau\nu}^m-\Gamma_{\tau m}^\lambda\Gamma_{\sigma\nu}^m\big)\\
&= u\hat R_{\sigma\tau\nu\omega}-\frac14u^{-2}|\cv u|^2
\big(\left(u \hat g_{\sigma\omega}\right)\left( u \hat g_{\tau \nu}\right)-\left(u \hat g_{\tau\omega}\right)\left(u \hat g_{\sigma\nu}\right)\big).
\end{align*}
For use below, we note that the curvature operator vanishes if a horizontal plane is paired
with a plane spanned by two vertical vectors, as follows easily from the observations
\begin{equation}	\label{eq:null-plane}
0=R_{\sigma\tau k}^\ell\,g_{j\ell}=R_{\sigma\tau kj}=R_{kj\sigma\tau}\qquad\mbox{ and }\qquad
0=R_{kj\sigma\tau}\,g^{\tau\nu}=R_{kj\sigma}^\nu.
\end{equation}
\smallskip

There is a more concise way to write these formulas. Recall that the Kulkarni--Nomizu product of symmetric
$(2,0)$-tensors $\Phi,\Psi$ is given by
\begin{equation}	\label{eq:KN-convention}
(\Phi\KN\Psi)_{IJKL} := \Phi_{IL}\Psi_{JK}+\Phi_{JK}\Psi_{IL}-\Phi_{IK}\Psi_{JL}-\Phi_{JL}\Psi_{IK}.
\end{equation}
With this normalization, the $(4,0)$-curvature tensor $\Rm$ of a metric $g$ of constant
sectional curvature $\kappa$ is given by $\Rm=\tfrac12\kappa\,g\KN g$. Noting that
\[
u^{-1/2}\check\cv\cv(u^{1/2})=\frac12 u^{-1}\check\cv\cv u -\frac14 u^{-2}\cv u \otimes\cv u
\]
and using the identity $u^{-2}|\cv u|^2=4|\cv(\log u^{1/2})|^2$,
one sees that the curvature formulas above are equivalent to
\begin{equation}	\label{eq:rm_single}
  \Rm
  = \check{\Rm} + u\hat{\Rm}
  - \frac12 \big|\cv(\log u^{1/2})\big|^2 (u \hat g)\KN (u \hat g)
  -2\, u\hat g \KN \big( u^{-1/2} \check\nabla \nabla u^{1/2}\big).
\end{equation}
\medskip

We now analyze the curvatures of multiply-warped products of the form~\eqref{eq:warped-product} on a
manifold $\mc M = \mc B\times\mc F_1\times\cdots\times\mc F_A$. As above, given a function $f:\mc B\to\mb R$, 
there is a natural function $\tilde f:\mc M\to\mb R$ defined by $\tilde f(x,\,y_1,\dots,y_A)=f(x)$.

\begin{claim}\label{claim:hessfmultiple}
If $f$ and $\tilde f$ are as above, then
\[
\nabla \nabla \tilde f = \check\nabla\nabla f + \sum_{\alpha = 1}^A
\big\lp\nabla (\log u_\alpha^{1/2}),\cv f\big\rp\, (u_\alpha g_{\mc F_\alpha}).
\]
\end{claim}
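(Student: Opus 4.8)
The plan is to mirror the proof of Claim~\ref{claim:hessf}. Working in a coordinate system that is normal for $g_{\mc B}$ and for each $g_{\mc F_\alpha}$ at a fixed point, I would use the identity $\cv_I\cv_J\tilde f=\pd_I\pd_J\tilde f-\Gamma_{IJ}^K\pd_K\tilde f$, where $\Gamma$ are the Christoffel symbols of $g=g_{\mc B}+\sum_{\alpha=1}^A u_\alpha g_{\mc F_\alpha}$. Since $\tilde f$ depends only on $x\in\mc B$, one has $\pd_K\tilde f=0$ for every vertical index $K$, so only the base-valued symbols $\Gamma_{IJ}^k$ enter the computation, and $\cv\tilde f=\cv f$ is horizontal.

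First I would record the base-valued Christoffel symbols of the multiply-warped metric. Using lowercase Greek indices $\sigma_\alpha,\tau_\alpha,\dots$ for the factor $\mc F_\alpha$, the same computation that yields~\eqref{eq:Christoffel} gives
\[
\Gamma_{ij}^k=\check\Gamma_{ij}^k,\qquad
\Gamma_{i\sigma_\alpha}^k=0,\qquad
\Gamma_{\sigma_\alpha\tau_\beta}^k=0\ \ (\alpha\neq\beta),\qquad
\Gamma_{\sigma_\alpha\tau_\alpha}^k=-\tfrac12\,\check g^{k\ell}u_\alpha^{-1}\pd_\ell u_\alpha\,(u_\alpha g_{\mc F_\alpha})_{\sigma_\alpha\tau_\alpha}.
\]
The structural point, which makes this almost immediate, is that distinct fibers do not interact at the level of these symbols: each $u_\alpha$ depends only on $x$, each $g_{\mc F_\alpha}$ depends only on $y_\alpha$, and the metric is block diagonal, so $g_{\sigma_\alpha\tau_\beta}=0$ for $\alpha\neq\beta$.

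Then I would evaluate $\cv_I\cv_J\tilde f$ case by case according to the number and type of vertical indices. If $I=i$, $J=j$ are horizontal, then $\cv_i\cv_j\tilde f=\pd_i\pd_j f-\check\Gamma_{ij}^k\pd_k f=\check\cv_i\check\cv_j f$. If exactly one index is vertical, say $J=\sigma_\alpha$, both $\pd_i\pd_{\sigma_\alpha}\tilde f$ and $\Gamma_{i\sigma_\alpha}^k$ vanish, so the component is $0$. If $I=\sigma_\alpha$ and $J=\tau_\beta$ are both vertical with $\alpha\neq\beta$, again $\pd_{\sigma_\alpha}\pd_{\tau_\beta}\tilde f=0$ and $\Gamma_{\sigma_\alpha\tau_\beta}^k=0$, so the component vanishes; and for $\alpha=\beta$ one gets $-\Gamma_{\sigma_\alpha\tau_\alpha}^k\pd_k f=\tfrac12 u_\alpha^{-1}\lp\cv u_\alpha,\cv f\rp(u_\alpha g_{\mc F_\alpha})_{\sigma_\alpha\tau_\alpha}=\lp\cv(\log u_\alpha^{1/2}),\cv f\rp(u_\alpha g_{\mc F_\alpha})_{\sigma_\alpha\tau_\alpha}$, using $\tfrac12 u_\alpha^{-1}\cv u_\alpha=\cv(\log u_\alpha^{1/2})$ and the fact that the $g$-inner product of two horizontal gradients coincides with the $g_{\mc B}$-inner product. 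Assembling the nonzero components reproduces exactly the claimed identity $\cv\cv\tilde f=\check\cv\cv f+\sum_\alpha\lp\cv(\log u_\alpha^{1/2}),\cv f\rp(u_\alpha g_{\mc F_\alpha})$.

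There is essentially no obstacle beyond bookkeeping; the only point requiring a moment's care is the vanishing of the cross-fiber symbols $\Gamma_{\sigma_\alpha\tau_\beta}^k$ for $\alpha\neq\beta$, which is immediate from the product structure. Alternatively, one can avoid coordinates by viewing $\mc M$ as the single warped product $(\mc B\times_{u_\alpha}\mc F_\alpha)\times\prod_{\beta\neq\alpha}\mc F_\beta$, applying Claim~\ref{claim:hessf} to extract the $\mc F_\alpha$-contribution, and summing over $\alpha$; but the direct computation above is the most transparent route.
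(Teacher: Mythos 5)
Your main argument is correct, but it takes a different route from the paper. The paper proves Claim~\ref{claim:hessfmultiple} by induction on the number of fibers: it regards the $A$-fiber multiply-warped product as a singly-warped product over a base that is itself a multiply-warped product with $A-1$ fibers, and invokes Claim~\ref{claim:hessf} at each step, so that each application of the base case contributes one term $\big\lp\nabla(\log u_\alpha^{1/2}),\cv f\big\rp(u_\alpha g_{\mc F_\alpha})$ to the sum. You instead redo the coordinate computation from scratch for the full multiply-warped metric, which amounts to generalizing the proof of Claim~\ref{claim:hessf} directly rather than reusing its statement. Your case analysis is sound: the block-diagonal structure and the fact that each $u_\alpha$ and $g_{\mc F_\alpha}$ depend only on their own variables do give $\Gamma_{i\sigma_\alpha}^k=0$ and $\Gamma_{\sigma_\alpha\tau_\beta}^k=0$ for $\alpha\neq\beta$, and the surviving components assemble into exactly the claimed formula. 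The trade-off is that the induction is shorter and reuses already-established machinery, while your direct computation makes the vanishing of the cross-fiber terms explicit, which is arguably more transparent. One small caveat: your closing ``alternative'' decomposition $(\mc B\times_{u_\alpha}\mc F_\alpha)\times\prod_{\beta\neq\alpha}\mc F_\beta$ is not the actual structure of $\mc M$ (the remaining fibers are still warped over $\mc B$, not direct factors), so that aside would need to be reformulated as the iterated warped product the paper uses; this does not affect your main proof.
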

\begin{proof}
  This follows by induction on the number of fibers in the multiply-warped product, using Claim \ref{claim:hessf} as the base case.
  In the induction step, we regard the multiply-warped product with $A$ fibers as a singly-warped product over a base that
  is a multiply-warped product with $A-1$ fibers.
\end{proof}

Our next result provides the curvature formulas we need for this paper. We show below that it also leads directly to
estimate~\eqref{eq:curv-near-cyl}. In stating it, we write formula~\eqref{eq:rmmultiple} in terms of
$u_\alpha^2\Rm[g_{\mc F_\alpha}]$ and $u_\alpha g_{\mc F_\alpha}$ because, for fixed $g_{\mc F_\alpha}$,
these have constant norms with respect to $g$ if we vary $u_\alpha$. 

\begin{lemma}\label{claim:rmmultiple}
The $(4,0)$-tensor $\Rm$ of the metric~\eqref{eq:warped-product} on the multiply-warped product $\mc M$ is given by
  \begin{subequations}\label{eq:rmmultiple}
  \begin{align}
  \Rm[g]
  &= \Rm[g_{\mc B}] +\sum_{\alpha=1}^A u_\alpha^{-1}\,\big(u_\alpha^2 \Rm[g_{\mc F_\alpha}]\big)	\label{0-deriv}\\
  &\quad-\frac12 \sum_{\alpha = 1}^A |\cv(\log u_\alpha^{1/2})|^2\,\big(u_\alpha g_{\mc F_\alpha}\KN u_\alpha g_{\mc F_\alpha}\big)
  	\label{eq:rmmultiple-fiber} \\
  &\quad- \sum_{\alpha = 1}^{A} \sum_{\beta = 1}^{\alpha -1} \Big\lp\nabla (\log u_\alpha^{1/2}),\nabla (\log u_\beta^{1/2})\Big\rp
  \big(u_\alpha g_{\mc F_\alpha} \KN u_\beta g_{\mc F_\beta}\big) \\
  &\quad- 2\sum_{\alpha = 1}^A u_\alpha\, g_{\mc F_\alpha} \KN \big(u_\alpha^{-1/2}\cv_{g_{\mc B}}\cv (u_\alpha^{1/2})\big).
  \end{align}
  \end{subequations}
\end{lemma}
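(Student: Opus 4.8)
The plan is to prove formula~\eqref{eq:rmmultiple} by induction on the number $A$ of fiber factors, exactly mirroring the strategy used for Claim~\ref{claim:hessfmultiple}. The base case $A=1$ is precisely formula~\eqref{eq:rm_single}, which has already been derived above via the coordinate computation of the Christoffel symbols~\eqref{eq:Christoffel} and the curvature tensor; I would just rewrite it in the normalized form, using the observation $u^{-2}|\cv u|^2 = 4|\cv(\log u^{1/2})|^2$ and the identity relating $u^{-1/2}\check\cv\cv(u^{1/2})$ to $u^{-1}\check\cv\cv u - \tfrac14 u^{-2}\cv u\ten\cv u$. For the inductive step, I would regard the multiply-warped product $\mc M^{(A)} = \mc B\times\mc F_1\times\cdots\times\mc F_A$ as a \emph{singly}-warped product $\mc M^{(A-1)}\times_{u_A}\mc F_A$, whose base $\mc M^{(A-1)} = \mc B\times\mc F_1\times\cdots\times\mc F_{A-1}$ carries the multiply-warped metric $g^{(A-1)} = g_{\mc B} + \sum_{\alpha=1}^{A-1} u_\alpha g_{\mc F_\alpha}$ to which the inductive hypothesis applies.

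The main technical point is the careful application of~\eqref{eq:rm_single} with $\check g$ replaced by $g^{(A-1)}$ and $\hat g$ replaced by $g_{\mc F_A}$. This produces four groups of terms: (i) $\Rm[g^{(A-1)}]$, which the inductive hypothesis expands into the desired formula with $A$ replaced by $A-1$; (ii) $u_A\Rm[g_{\mc F_A}] = u_A^{-1}(u_A^2\Rm[g_{\mc F_A}])$, contributing the missing $\alpha=A$ summand to~\eqref{0-deriv}; (iii) the term $-\tfrac12|\cv(\log u_A^{1/2})|^2(u_A g_{\mc F_A})\KN(u_A g_{\mc F_A})$, contributing the missing $\alpha=A$ summand to~\eqref{eq:rmmultiple-fiber}; and (iv) the term $-2\,u_A g_{\mc F_A}\KN\big(u_A^{-1/2}\,\cv_{g^{(A-1)}}\cv(u_A^{1/2})\big)$. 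The one place where care is genuinely needed is term (iv): the Hessian appearing there is computed with respect to the \emph{base} metric $g^{(A-1)}$, not $g_{\mc B}$, so I must invoke Claim~\ref{claim:hessfmultiple} (applied on $\mc M^{(A-1)}$ with the function $u_A^{1/2}$ restricted to $\mc B$) to rewrite $\cv_{g^{(A-1)}}\cv(u_A^{1/2}) = \check\cv\cv(u_A^{1/2}) + \sum_{\beta=1}^{A-1}\lp\cv(\log u_\beta^{1/2}),\cv(u_A^{1/2})\rp(u_\beta g_{\mc F_\beta})$. Substituting this identity, the $\check\cv\cv(u_A^{1/2})$ piece supplies the missing $\alpha=A$ summand of the last line of~\eqref{eq:rmmultiple}, while the cross terms $\lp\cv(\log u_\beta^{1/2}),\cv(u_A^{1/2})\rp$, after rewriting $\cv(u_A^{1/2}) = u_A^{1/2}\cv(\log u_A^{1/2})$ and absorbing the factor $u_A^{1/2}$ into the coefficient $2\,u_A g_{\mc F_A}\KN(u_\beta g_{\mc F_\beta})$, supply exactly the $\alpha=A$, $\beta<A$ terms of the cross-term sum $\sum_\alpha\sum_{\beta<\alpha}\lp\cv(\log u_\alpha^{1/2}),\cv(\log u_\beta^{1/2})\rp(u_\alpha g_{\mc F_\alpha}\KN u_\beta g_{\mc F_\beta})$.

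I expect the bookkeeping of the cross terms in step (iv) to be the only real obstacle: one must check that the numerical coefficient ($2$ versus $1$), the symmetrization implicit in the Kulkarni--Nomizu product, and the range of the double sum all match up so that the $A$-fiber formula closes exactly. A sanity check is that the cross-term sum is symmetric in $\alpha\leftrightarrow\beta$ while $\KN$ is symmetric in its two tensor arguments, so writing it with the constraint $\beta<\alpha$ (rather than $\beta\neq\alpha$ with a factor of $\tfrac12$) is consistent, and each new pair $(A,\beta)$ with $\beta<A$ is produced exactly once. Once this is verified, collecting all four groups reassembles~\eqref{eq:rmmultiple} for $A$ fibers, completing the induction. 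A brief remark at the end would note that~\eqref{eq:null-plane} guarantees there are no mixed fiber--fiber curvature terms beyond those of Kulkarni--Nomizu type, which is what makes the inductive reduction clean.
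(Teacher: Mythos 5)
Your strategy is exactly the paper's proof: induction on $A$, with base case \eqref{eq:rm_single}, the inductive step obtained by viewing the $A$-fiber product as a singly-warped product over the $(A-1)$-fiber base $g_{(A-1)}=g_{\mc B}+\sum_{\alpha=1}^{A-1}u_\alpha g_{\mc F_\alpha}$, and Claim~\ref{claim:hessfmultiple} used to expand $u_A^{-1/2}\cv_{(A-1)}\cv(u_A^{1/2})$ into a $g_{\mc B}$-Hessian plus the cross terms. So the architecture is right and matches the paper.

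However, the one step you explicitly defer --- ``one must check that the numerical coefficient ($2$ versus $1$) \dots match up'' --- is precisely where the induction does not close if \eqref{eq:rm_single} is taken at face value: substituting the Hessian identity into $-2\,u_A g_{\mc F_A}\KN(\cdots)$ produces the new cross terms with coefficient $-2$, whereas the statement has $-1$, and no symmetrization or re-indexing absorbs that factor (each unordered pair $\{A,\beta\}$ is produced exactly once, as you note). You should settle this by a component check rather than asserting it. With the convention~\eqref{eq:KN-convention}, for $\sigma,\omega$ tangent to $\mc F_\alpha$ and $\tau,\nu$ tangent to $\mc F_\beta$ the only surviving term of $\big(u_\alpha g_{\mc F_\alpha}\KN u_\beta g_{\mc F_\beta}\big)_{\sigma\tau\nu\omega}$ is $(u_\alpha g_{\mc F_\alpha})_{\sigma\omega}(u_\beta g_{\mc F_\beta})_{\tau\nu}$, while the Christoffel computation gives
\[
R_{\sigma\tau\nu\omega}
=-\tfrac14\,u_\alpha^{-1}u_\beta^{-1}\lp\cv u_\alpha,\cv u_\beta\rp\,(u_\alpha g_{\mc F_\alpha})_{\sigma\omega}(u_\beta g_{\mc F_\beta})_{\tau\nu}
=-\big\lp\cv\log u_\alpha^{1/2},\cv\log u_\beta^{1/2}\big\rp\,\big(u_\alpha g_{\mc F_\alpha}\KN u_\beta g_{\mc F_\beta}\big)_{\sigma\tau\nu\omega},
\]
so the coefficient $-1$ on the cross terms is the correct one. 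The same check on the mixed components, $R_{i\tau\nu\ell}=-(u\hat g)_{\tau\nu}\big(u^{-1/2}\check\cv\cv u^{1/2}\big)_{i\ell}$ (equivalently, tracing to recover~\eqref{eq:Rc-horizontal}), shows that relative to~\eqref{eq:KN-convention} the Hessian terms in~\eqref{eq:rm_single} and in the last line of~\eqref{eq:rmmultiple} should carry coefficient $-1$, not $-2$. Once that normalization is fixed, your induction closes exactly as outlined; as written, you would silently inherit a factor-of-two inconsistency at the only nontrivial point of the argument.
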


\begin{proof}
  This follows by an induction argument similar to that in Claim~\ref{claim:hessfmultiple}.
  The induction hypothesis is that the claim holds for a metric with $A-1$ fibers, which we denote by
  $g_{(A-1)}:=g_{\mc B}+\sum_{\alpha=1}^{A-1} u_\alpha g_{\mc F_\alpha}$.
  We denote the curvature and connection of $g_{(A-1)}$ by $\Rm_{(A-1)}$ and $\nabla_{(A-1)}$.
  
  We may apply formula~\eqref{eq:rm_single} for the curvature of a singly warped product to write the curvature of
  $g = g_{(A-1)} +  u_A g_{\mc F_A}$ in terms of $\Rm_{(A-1)}$, obtaining
  \begin{align*}
  \Rm[g]
  &= \Rm_{(A-1)} + u_A\Rm[g_{\mc F_A}]\\
  &\quad- \frac12 \big|\cv(\log u_A^{1/2})\big|^2 (u_A g_{\mc F_A})\KN (u_A g_{\mc F_A})\\
 &\quad -2\, u_A g_{\mc F_A} \KN \big( u_A^{-1/2} \nabla_{(A-1)} \nabla u_A^{1/2}\big).
  \end{align*}
 Using Claim~\ref{claim:hessfmultiple}, we rewrite the Hessian term in the last line above as
  \[
    u_A^{-1/2}\nabla_{(A-1)}\nabla u_A^{1/2}
    = u_A^{-1/2}\nabla_{g_{\mc B}} \nabla u_A^{1/2}
      + \sum_{\beta = 1}^{A-1}
      \big\lp \nabla \log u_\beta^{1/2}, \nabla \log u_A^{1/2}\big\rp (u_{\beta} g_{\mc F_\beta}).
  \]
  This completes the induction step.
  
  In summary, this induction argument shows that adding an  additional fiber to a multiply-warped product adds an
  additional term to each (outer) sum in~\eqref{eq:rmmultiple}.
\end{proof}

\begin{remark}		\label{rem-curv}
It follows easily from Lemma~\ref{claim:rmmultiple} that
there exists a universal constant $C$ depending only on the dimensions such that
\[
|\Rm|_g \leq |\Rm[g_{\mc B}]|_{g_{\mc B}}
+C\sum_{\alpha=1}^A
\Big(
  u_\alpha^{-1}|u_\alpha^2\Rm[g_{\mc F_\alpha}]|_g+u_\alpha^{-2}|\cv v_\alpha|^2_g
  +u_\alpha^{-1}|\check\cv\cv v_\alpha|_{g_{\mc B}}
\Big).
\]
Furthermore, one sees readily that
\begin{align*}
\left|\Rm[g]-\sum_{\alpha=1}^A u_\alpha^{-1}\Rm[g_{\mc F_\alpha}]\right|_g
	&\leq C\left\{\rho^{1/2}+\sum_{\alpha=1}^A\left(u_\alpha^{-2}\gamma_\alpha+u_\alpha^{-1}\chi_\alpha^{1/2}\right)\right\},
\end{align*}
where $\rho,\gamma_\alpha,\chi_\alpha$ are defined in~\eqref{eq-def-quantity}. This is estimate~\eqref{eq:curv-near-cyl}.
\end{remark}
\medskip

To conclude, we calculate the components of the Ricci tensor. We obtain
\begin{align}
R_{ij}&=g^{JK}R_{iJKj}\notag \\
      &=\check R_{ij}
        -\sum_{\alpha=1}^A n_\alpha
        \left(
        \frac12 u_\alpha^{-1}\cv_i\cv_j u_\alpha
        -\frac14 u_\alpha^{-2}\cv_i u_\alpha\cv_j u_\alpha
        \right),
\label{eq:Rc-horizontal}
\end{align}
and on each fiber $\mc F_\alpha$,
\begin{align*}
R_{\tau\nu}&=R_{i\tau\nu}^i + R_{\sigma\tau\nu}^\sigma\\
&=(\hat R_\alpha)_{\tau\nu}-
\left(\frac12 \Delta_{\mc B} u_\alpha  -\frac12 u_\alpha^{-1}|\cv u_\alpha|^2 + \frac12 \sum_{\beta = 1}^An_\beta \innerp{u_\alpha}{\cv \log u_\beta^{1/2}}\right)(\hat g_\alpha)_{\tau\nu}.
\end{align*}
In the last formula, the Laplacian on the \textsc{rhs} is computed with respect to the metric $g_{\mc B}$ on the base.
To match the convention used elsewhere in this paper, we rewrite the expression in terms of the Laplacian
$\Delta\equiv\Delta_{\mc M}$ computed with respect to the metric $g$ on the total space $\mc M$.
Using~\eqref{eq:compare-Laplacians}, we obtain
\begin{equation}	\label{eq:Rc-vertical}
R_{\tau\nu}=(\hat R_\alpha)_{\tau\nu}-
\frac12\Big(\Delta u_\alpha -u_\alpha^{-1}|\cv u_\alpha|^2\Big)(\hat g_\alpha)_{\tau\nu}.
\end{equation}
Formulas~\eqref{eq:Rc-horizontal} and~\eqref{eq:Rc-vertical} directly imply the system~\eqref{eq:Ricci-flow-system}
of evolution equations that results if one evolves the metric $g$ on the total space by Ricci flow.

\section{Laplacians of tensor seminorms}

For use in Appendix~\ref{evolve-curvatures} below, we here compute and estimate the Laplacians
of various tensor seminorms. We continue the conventions of Appendix~\ref{multiply-warped}, using
lowercase Roman indices (\emph{e.g.,} $i,j,k,\ell$) for horizontal vectors, lowercase Greek indices
(\emph{e.g.,} $\sigma,\tau,\nu,\omega$) for vertical vectors, and allowing capital Roman letters to
range over both sets of indices. We  continue denoting $g_{\mc B}$ by $\check g$ when working in local coordinates.

Before treating Laplacians of seminorms, we establish some preliminary results for first derivatives of tensor fields.

\begin{claim}	\label{lem:Tim}
If $T$ is an $(m,0)$-tensor field such that $T(U_1,U_2,\dots,U_m)$ vanishes if exactly
one $U_k$ is vertical, then
\[
\cv T\Big|_{T\mc M\otimes(T\mc B)^m} = \cv_{g_{\mc B}}\Big(T\Big|_{(T\mc B)^m}\Big).
\]
\end{claim}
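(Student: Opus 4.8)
The plan is to verify the identity by a direct computation in the local coordinates of Appendix~\ref{multiply-warped}, using only the Christoffel symbol formulas~\eqref{eq:Christoffel}. First a remark on what the statement means: for each point $p\in\mc M$, the restriction $T|_{(T\mc B)^m}$ is a covariant $m$-tensor on $T_{\pi(p)}\mc B$, where $\pi\colon\mc M\to\mc B$ is the projection, and as $p$ varies this is a section of the pullback bundle $\pi^*\big((T^*\mc B)^{\otimes m}\big)$; the operator $\cv_{g_{\mc B}}$ on the right-hand side denotes the covariant derivative associated to the pullback of the Levi--Civita connection of $g_{\mc B}$. In particular, in vertical directions this pullback connection differentiates components by the plain partial derivative, because the base Christoffel symbols carry only horizontal indices. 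With this understood, both sides are sections of $T^*\mc M\otimes(T^*\mc B)^{\otimes m}$ and it makes sense to compare them.

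The computation I would carry out: write $(\cv_I T)_{j_1\cdots j_m}=\pd_I T_{j_1\cdots j_m}-\sum_{k=1}^m\Gamma_{Ij_k}^{L}\,T_{j_1\cdots j_{k-1}\,L\,j_{k+1}\cdots j_m}$, where $I$ ranges over all indices and $j_1,\dots,j_m$ are horizontal, and split each correction term according to whether $L=\ell$ is horizontal or $L=\nu$ is vertical. In the vertical case the tensor component $T_{j_1\cdots \nu\cdots j_m}$ has exactly one vertical slot among otherwise horizontal ones, hence vanishes by hypothesis; so only $L=\ell$ survives, irrespective of the value of $\Gamma_{Ij_k}^\nu$. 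Now split on $I$. If $I=i$ is horizontal, then $\Gamma_{ij_k}^\ell=\check\Gamma_{ij_k}^\ell$ by~\eqref{eq:Christoffel:Base}, and one reads off that $(\cv_i T)_{j_1\cdots j_m}=\pd_i T_{j_1\cdots j_m}-\sum_k\check\Gamma_{ij_k}^\ell\,T_{j_1\cdots\ell\cdots j_m}$, which is precisely the $(i,j_1,\dots,j_m)$ component of $\cv_{g_{\mc B}}\big(T|_{(T\mc B)^m}\big)$. If $I=\sigma$ is vertical, then $\Gamma_{\sigma j_k}^\ell=0$ by~\eqref{eq:Christoffel}, so $(\cv_\sigma T)_{j_1\cdots j_m}=\pd_\sigma T_{j_1\cdots j_m}$, which is the $(\sigma,j_1,\dots,j_m)$ component of the pullback covariant derivative described above. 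Combining the two cases yields the claimed identity.

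I do not expect any genuine obstacle here; the two points that require care are (i) interpreting $\cv_{g_{\mc B}}$ as the pullback connection, so that the statement typechecks in vertical directions, and (ii) noticing that in the vertical split $L=\nu$ it is the tensor component $T_{j_1\cdots\nu\cdots j_m}$, and not the Christoffel symbol $\Gamma_{\sigma j_k}^\nu=\tfrac12 u_\alpha^{-1}\pd_{j_k}u_\alpha\,\delta_\sigma^\nu$, that vanishes. I would also note in passing that the hypothesis on $T$ is used exactly once, to discard the $L=\nu$ correction terms, and that this is what lets $\cv T$ behave on the sub-bundle $T\mc M\otimes(T\mc B)^m$ as if $T$ were simply pulled back from the base --- which is the form in which the Claim gets applied (to $T=\Rm[g_{\mc B}]$ and to iterated Hessians $\cv^k v_\alpha$) in Appendix~\ref{evolve-curvatures}.
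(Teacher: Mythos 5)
Your proof is correct and follows essentially the same route as the paper's: a direct computation with the Christoffel symbols of~\eqref{eq:Christoffel}, in which the hypothesis on $T$ is used exactly once to kill the correction terms whose tensor slot becomes vertical, while $\Gamma_{\sigma j}^\ell=0$ and $\Gamma_{ij}^k=\check\Gamma_{ij}^k$ handle the rest. You are in fact slightly more careful than the paper, whose displayed computation in the vertical direction tacitly drops $\partial_\sigma T_{j_1\cdots j_m}$ (harmless for the basic tensors to which the Claim is applied), whereas your pullback-connection reading of the right-hand side makes that step explicit.
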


\begin{proof}
In the proof, we denote horizontal vector fields by $H_1,H_2,\dots$ and vertical vector fields by $V, V'$.
For simplicity, we illustrate the idea of the proof with $m=3$. The generalization to arbitrary $m$ is clear.
The key fact is that the only components of the
connection in~\eqref{eq:Christoffel} that differ from those of a direct (\emph{i.e.,} non-warped) product
are those that exchange horizontal and vertical vectors. Specifically, we have
\begin{align*}
\cv T(V,H_1,H_2,H_3)=-V^\sigma\Gamma_{\sigma\ell}^\tau\big(
H_1^\ell H_2^j H_3^k T_{\tau jk}+H_1^i H_2^\ell H_3^k T_{i\tau k}
+H_1^i H_2^j H_3^\ell T_{ij\tau}\big)=0.
\end{align*}
Hence $\cv T(U,H_1,H_2,H_3)$ can be nonzero only if $U=H_4$ is horizontal.

We note that the assumption that $H_1,H_2,H_3$ are horizontal is necessary: indeed,
similar reasoning shows that terms like $\cv T(V,H_1,H_2,V')$ are nonzero in general.
\end{proof}

\begin{claim}	\label{nabla-two-tensor}
If $T$ is a symmetric $(2,0)$-tensor field with no nonzero horizontal-vertical components, then all components
of $\cv T$ for a warped product are the same as those for a direct product (\emph{i.e.,} a metric
with $u$ constant) except
\begin{align*}
\cv_i T_{\sigma\tau}&=-u^{-1}\cv_i u\,T_{\sigma\tau},\\
\cv_\sigma T_{i\tau}&=\cv_\sigma T_{\tau i}
	=\frac12 u^{-1} \cv^j u\,T_{ji}\,g_{\sigma\tau}-\frac12u^{-1}\cv_i u\,T_{\sigma\tau},
\end{align*}
which do not vanish in general.
\end{claim}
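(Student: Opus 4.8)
The statement is purely computational, and the plan is to read it off from the Christoffel symbols recorded in~\eqref{eq:Christoffel}, continuing in the local coordinates of Appendix~\ref{multiply-warped}. The organizing observation is that the warped-product connection $\Gamma$ differs from the direct-product connection (the case $u\equiv\mr{const}$) only in the families $\Gamma_{\sigma\tau}^k$, $\Gamma_{i\tau}^\nu=\Gamma_{\tau i}^\nu$, and $\Gamma_{\sigma j}^\nu=\Gamma_{j\sigma}^\nu$, each proportional to $u^{-1}\cv u$; write $D_{IJ}^K$ for the corresponding difference tensor, supported on exactly those slots. Starting from $\cv_K T_{IJ}=\partial_K T_{IJ}-\Gamma_{KI}^L T_{LJ}-\Gamma_{KJ}^L T_{IL}$ and subtracting the identical expression for the direct product, I would reduce the claim to evaluating $-D_{KI}^L T_{LJ}-D_{KJ}^L T_{IL}$ over all index types of $(K,I,J)$, each horizontal (Roman) or vertical (Greek), using the hypothesis $T_{i\sigma}=0$ throughout.

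The next step is to dispose of the combinations for which this discrepancy vanishes. If $K,I,J$ are all of one type, $D$ never enters, since it always mixes horizontal and vertical, so $\cv_K T_{IJ}$ equals the direct-product value. If $K$ is horizontal and one of $I,J$ is vertical — for instance $\cv_k T_{i\tau}$ — the only candidate new terms again contract $D$ against a horizontal--vertical component of $T$, which is zero. The same occurs when $K$ is vertical but $D$ is forced onto a horizontal--vertical slot of $T$, as in $\cv_\sigma T_{ij}$, whose only new term is $-\Gamma_{\sigma i}^\nu T_{\nu j}=0$. A brief pass through the remaining such index types ($\cv_k T_{ij}$, $\cv_i T_{j\tau}$, $\cv_\tau T_{\sigma\nu}$, and so on) confirms that they are all of this harmless form.

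It then only remains to evaluate the two genuinely nonzero families. For $\cv_i T_{\sigma\tau}$ the new contributions are $-\Gamma_{i\sigma}^\nu T_{\nu\tau}-\Gamma_{i\tau}^\nu T_{\sigma\nu}=-u^{-1}\cv_i u\,T_{\sigma\tau}$, which is the first displayed formula. For $\cv_\sigma T_{i\tau}$ there are two new contributions: $-\Gamma_{\sigma i}^\nu T_{\nu\tau}=-\tfrac12 u^{-1}\cv_i u\,T_{\sigma\tau}$, and $-\Gamma_{\sigma\tau}^k T_{ki}=\tfrac12 u^{-1}\cv^j u\,T_{ji}\,g_{\sigma\tau}$, where I have used $\Gamma_{\sigma\tau}^k=-\tfrac12\check g^{k\ell}u^{-1}\cv_\ell u\,(u\hat g_{\sigma\tau})$ and written $g_{\sigma\tau}=u\hat g_{\sigma\tau}$; their sum is the second displayed formula, and $\cv_\sigma T_{i\tau}=\cv_\sigma T_{\tau i}$ by symmetry of $T$. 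Since both formulas vanish identically when $u$ is constant, this exhibits precisely the components at which the warped-product covariant derivative departs from the direct-product one. There is no genuine obstacle here: the only point needing care is the bookkeeping of index types — in particular keeping $\cv_i T_{\sigma\tau}$ and $\cv_\sigma T_{i\tau}$ separate, since these superficially similar components pick up different numbers of warping terms.
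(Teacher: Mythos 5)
Your proposal is correct and is exactly the paper's argument: the paper's proof of this claim is simply ``Direct computation using~\eqref{eq:Christoffel},'' and your computation carries that out, correctly isolating the difference tensor supported on the mixed Christoffel symbols, using $T_{i\sigma}=0$ to kill all but the two displayed families, and reproducing both formulas. No gaps.
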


\begin{proof}
Direct computation using~\eqref{eq:Christoffel}.
\end{proof}

We note for use below that Claim~\ref{nabla-two-tensor} implies 
easily that all components of $\cv g_{\mc B}$ vanish identically except
\begin{equation}	\label{eq:grad-hat-g}
\cv_\sigma\check g_{i\tau}= \cv_\sigma\check g_{\tau i}=\frac12 u^{-1}\cv_i u\,g_{\sigma\tau}.
\end{equation}
\smallskip

For clarity, before deriving an estimate for multiply-warped products, we first perform an exact calculation for a
singly-warped product.
We continue to assume that $T$ is a symmetric $(2,0)$-tensor field with no nonzero horizontal-vertical components.
Then we have
\begin{align}
\cv_Q|T|^2_{g_{\mc B}} &= 2(\cv_Q\check g^{IK})\check g^{JL}T_{IJ}T_{KL}
	+2\check g^{IK}\check g^{JL}(\cv_Q T_{IJ})T_{KL}	\notag\\
&=2\check g^{IK}\check g^{JL}(\cv_Q T_{IJ})T_{KL},		\label{eq:one-derivative-T}
\end{align}
because
\[
(\cv_P\check g^{IK})\check g^{JL}T_{IJ}T_{KL}=
\cv_\sigma\check g^{i\tau}\check g^{j\ell}T_{ij}T_{\tau\ell}=0
\]
by assumption. Thus we obtain
\begin{align*}
\Delta|T|^2_{g_{\mc B}} &= 2g^{PQ}\cv_P\big\{\check g^{IK}\check g^{JL}(\cv_Q T_{IJ})T_{KL}\big\}\\
&=2g^{PQ}\check g^{IK}\check g^{JL}(\cv_P\cv_Q T_{IJ})T_{KL}
	+2g^{PQ}\check g^{IK}\check g^{JL}(\cv_P T_{IJ})(\cv_Q T_{KL})\\
	&\qquad+4g^{PQ}(\cv_P\check g^{IK})\check g^{JL}(\cv_Q T_{IJ})T_{KL}.
\end{align*}
Writing this invariantly, we have
\[
\Delta|T|^2_{g_{\mc B}} = 2\lp\Delta T,T\rp_{g_{\mc B}} + 2|\cv T|^2_{g_{\mc B}} + 4\mc Z[T],
\]
where
\begin{align*}
\mc Z[T]&:=g^{PQ}(\cv_P\check g^{IK})\check g^{JL}(\cv_Q T_{IJ})T_{KL}\\
&=g^{\sigma Q}(\cv_\sigma\check g^{i\tau})\check g^{j\ell}(\cv_Q T_{IJ})T_{\tau\ell}
	+g^{\sigma Q}(\cv_\sigma\check g^{\nu k})\check g^{j\ell}(\cv_Q T_{\nu j})T_{k\ell}\\
	&=g^{\sigma Q}(\cv_\sigma\check g^{\nu k})\check g^{j\ell}(\cv_Q T_{\nu j})T_{k\ell}\\
	&=-\frac12 u^{-1}\check g^{j\ell}(\cv^k uT_{k\ell})(\cv^\tau T_{\tau j}).
\end{align*}
Note that we use~\eqref{eq:grad-hat-g} in the final step. We expand the divergence factor, obtaining
\[
\cv^\tau T_{\tau j}=\frac{\dim(\mc F)}{2}u^{-1}\cv^i u T_{ij}-\frac12u^{-1}\cv_j u (\hat{\mr{tr}}T),
\]
where $\hat{\mr{tr}}T:=g^{\sigma\tau}T_{\sigma\tau}$ denotes the trace of the vertical components of $T$.
Combining factors, we write $\mc Z[T]$ invariantly as
\[
\mc Z[T] = \frac14 u^{-2} (\hat{\mr{tr}}T) \lp T,\cv u \otimes\cv u\rp_{g_{\mc B}}
	-\frac{\dim(\mc F)}{4}u^{-2}|T(\cv u)|^2_{g_{\mc B}},
\]
where in the second term, we regard $T$ as an endomorphism of the tangent bundle. This work proves:

\begin{lemma}	\label{lem:norm-Laplacian}
If $T$ is a symmetric $(2,0)$-tensor field with no nonzero horizontal-vertical components on a warped product, then
\begin{align*}
-\Delta|T|^2_{g_{\mc B}}&=-2\lp\Delta T,T\rp_{g_{\mc B}}-2|\cv T|^2_{g_{\mc B}}\\
&\quad+\dim(\mc F)\, u^{-2}|T(\cv u)|^2_{g_{\mc B}} -u^{-2} (\hat{\mr{tr}}T) \lp T,\cv u \otimes\cv u\rp_{g_{\mc B}}.
\end{align*}
\end{lemma}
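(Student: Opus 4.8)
\emph{Proof proposal.} The plan is to argue in the local coordinates of Appendix~\ref{multiply-warped}, centered at the origin of a system that is normal for both $\check g$ and $\hat g$ (so that $\check\Gamma=0$, $\hat\Gamma=0$ and $\pd_i\check g_{jk}=\pd_\sigma\hat g_{\tau\nu}=0$ there) while the mixed Christoffel symbols of~\eqref{eq:Christoffel} still survive. The structural input is the hypothesis that $T$ has no nonzero horizontal--vertical components, so that in the contraction $|T|^2_{g_{\mc B}}=\check g^{IK}\check g^{JL}T_{IJ}T_{KL}$ every surviving term pairs two horizontal or two vertical indices; the same must be tracked carefully through one and then two covariant derivatives.

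First I would differentiate once, $\cv_Q|T|^2_{g_{\mc B}}=2(\cv_Q\check g^{IK})\check g^{JL}T_{IJ}T_{KL}+2\check g^{IK}\check g^{JL}(\cv_Q T_{IJ})T_{KL}$, and observe that the first term vanishes: by Claim~\ref{nabla-two-tensor} (summarized in~\eqref{eq:grad-hat-g}), $\cv\check g$ has only horizontal--vertical components, which are annihilated when contracted against $T_{IJ}T_{KL}$. This produces~\eqref{eq:one-derivative-T}. Next I would differentiate again and contract with $g^{PQ}$, applying the product rule; this yields three groups of terms, namely $2\lp\Delta T,T\rp_{g_{\mc B}}$, $2|\cv T|^2_{g_{\mc B}}$, and a single cross term $4\mc Z[T]:=4\,g^{PQ}(\cv_P\check g^{IK})\check g^{JL}(\cv_Q T_{IJ})T_{KL}$ coming from the one remaining $\cv\check g^{-1}$ factor (the derivative of the other inverse-metric factor again vanishes for the same reason).

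The remaining work is to evaluate $\mc Z[T]$. Using~\eqref{eq:grad-hat-g}, only the component $\cv_\sigma\check g^{\nu k}$ is relevant, which collapses $\mc Z[T]$ to $-\tfrac12 u^{-1}\check g^{j\ell}(\cv^k u\,T_{k\ell})(\cv^\tau T_{\tau j})$. Expanding the vertical divergence $\cv^\tau T_{\tau j}$ via Claim~\ref{nabla-two-tensor} gives $\tfrac{\dim(\mc F)}{2}u^{-1}\cv^i u\,T_{ij}-\tfrac12 u^{-1}\cv_j u\,(\hat{\mr{tr}}\,T)$, and substituting and writing things invariantly yields $\mc Z[T]=\tfrac14 u^{-2}(\hat{\mr{tr}}\,T)\lp T,\cv u\ten\cv u\rp_{g_{\mc B}}-\tfrac{\dim(\mc F)}{4}u^{-2}|T(\cv u)|^2_{g_{\mc B}}$. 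Rearranging $\Delta|T|^2_{g_{\mc B}}=2\lp\Delta T,T\rp_{g_{\mc B}}+2|\cv T|^2_{g_{\mc B}}+4\mc Z[T]$ then gives the stated identity.

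The main obstacle is not conceptual but bookkeeping: one must be disciplined about which index contractions survive under the no-mixed-components hypothesis, and must not conflate the Levi--Civita connection $\cv$ of $g$ with that of $\check g$ precisely at the step where $\check g$ and $\check g^{-1}$ are themselves being differentiated. Once the vanishing of the two $\cv\check g^{-1}$ contractions is justified, everything reduces to direct substitution of the Christoffel formulas~\eqref{eq:Christoffel}.
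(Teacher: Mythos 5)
Your proposal follows the paper's proof essentially verbatim: the same first-derivative computation leading to~\eqref{eq:one-derivative-T}, the same identification of the cross term $\mc Z[T]$ after the second differentiation, and the same evaluation of the vertical divergence $\cv^\tau T_{\tau j}$ via Claim~\ref{nabla-two-tensor}, followed by the same invariant rewriting.

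One step of your justification is wrong, however, even though it happens not to corrupt your final formula. You assert that the cross term $4\mc Z[T]$ comes ``from the one remaining $\cv\check g^{-1}$ factor'' because ``the derivative of the other inverse-metric factor again vanishes for the same reason.'' That is false. In the first-derivative computation, the contraction $(\cv_P\check g^{IK})\check g^{JL}T_{IJ}T_{KL}$ vanishes because the only surviving components of $\cv\check g^{-1}$ are horizontal--vertical, which forces a mixed index onto $T$ itself, and $T$ has no mixed components. In the second-derivative computation, the analogous contractions instead land the mixed index on $\cv_Q T_{IJ}$, which by Claim~\ref{nabla-two-tensor} \emph{does} have nonzero mixed components (e.g.\ $\cv_\sigma T_{i\tau}\neq0$ in general). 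So \emph{both} $\cv\check g^{-1}$ terms survive; they are equal to each other by the symmetry of $T$ and the symmetric roles of the index pairs $(I,K)$ and $(J,L)$, and that equality is precisely where the coefficient $4=2\times2$ comes from. Had only one of the two terms survived, as you claim, the coefficient would be $2$, and the constants $\dim(\mc F)$ and $1$ in the statement of the lemma would come out halved --- a real error, since the lemma is an identity rather than an inequality. Your written coefficient of $4$ is correct, but it is inconsistent with your stated reason; replace the parenthetical by the symmetry argument.
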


Generalizing this to the multiply-warped products we study in this paper, one readily obtains:

\begin{corollary}	\label{estimate-norm-Laplacian}
If $T$ is a symmetric $(2,0)$-tensor field with no nonzero horizontal-vertical components, then there exists
a constant $C$ depending only on the dimension vector $\vec N=(n,n_\alpha)$ such that
\[
-\Delta|T|^2_{g_{\mc B}}\leq-2\lp\Delta T,T\rp_{g_{\mc B}}-2|\cv T|^2_{g_{\mc B}}
+C \Big(\sum_{\alpha=1}^A |\cv \log u_\alpha|^2\Big) |T| |T|_{g_{\mc B}}.
\]
\end{corollary}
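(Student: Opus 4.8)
The plan is to rerun the computation that proves Lemma~\ref{lem:norm-Laplacian}, using the Christoffel symbols~\eqref{eq:Christoffel} of a multiply-warped product in place of those of a singly-warped one, and then to dispatch the resulting error terms by Cauchy--Schwarz. (An equivalent route is induction on the number $A$ of fibers, but the direct computation seems cleaner, since the inductive hypothesis would naturally involve norms with respect to an intermediate base rather than $g_{\mc B}$.) The first step is to record the multiply-warped analogue of Claim~\ref{nabla-two-tensor} and of~\eqref{eq:grad-hat-g}: by the same induction used elsewhere in Appendix~\ref{multiply-warped}, for a symmetric $(2,0)$-tensor $T$ with no nonzero horizontal--vertical components, every component of $\cv T$ coincides with its direct-product (constant-$u_\alpha$) value \emph{except} those of the schematic form $\cv_i T_{\sigma\tau}$ and $\cv_\sigma T_{i\tau}$ with $\sigma,\tau$ on a single fiber $\mc F_\alpha$, which differ from the direct-product value by a term carrying a factor $u_\alpha^{-1}\cv u_\alpha$; in particular the only nonvanishing components of $\cv g_{\mc B}$ are $\cv_\sigma\check g_{i\tau}=\cv_\sigma\check g_{\tau i}=\tfrac12 u_\alpha^{-1}\cv_i u_\alpha\,g_{\sigma\tau}$ for $\sigma,\tau$ on $\mc F_\alpha$.

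With this in hand I would differentiate $|T|^2_{g_{\mc B}}$ twice, verbatim as in the singly-warped derivation. The first-derivative identity~\eqref{eq:one-derivative-T}, $\cv_Q|T|^2_{g_{\mc B}}=2\check g^{IK}\check g^{JL}(\cv_Q T_{IJ})T_{KL}$, survives unchanged because $(\cv_P\check g^{IK})\check g^{JL}T_{IJ}T_{KL}$ still vanishes: the nonzero components of $\cv\check g^{-1}$ still exchange one horizontal and one vertical index, whereas $T$ has none such. Differentiating once more and commuting $\Delta$ past the total-space metric contractions yields
\[
\Delta|T|^2_{g_{\mc B}}=2\lp\Delta T,T\rp_{g_{\mc B}}+2|\cv T|^2_{g_{\mc B}}+4\sum_{\alpha=1}^A\mc Z_\alpha[T],
\]
where, exactly as in the proof of Lemma~\ref{lem:norm-Laplacian}, $\mc Z_\alpha[T]$ is the contraction $g^{PQ}\big(\cv_P\check g^{IK}\big)\big|_{\mc F_\alpha}\check g^{JL}(\cv_Q T_{IJ})T_{KL}$ together with its $I\leftrightarrow J$ transpose; expanding the vertical divergences $\cv^\tau T_{\tau j}$ that occur therein via the analogue of Claim~\ref{nabla-two-tensor} shows each $\mc Z_\alpha[T]$ to be a full contraction of two copies of $T$, metric factors, and exactly two factors $u_\beta^{-1}\cv u_\beta$ --- one from $\cv\check g^{-1}$ (with $\beta=\alpha$) and one from the expanded divergence (possibly on a different fiber).

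To finish, I would estimate each $\mc Z_\alpha[T]$. Contracting a vertical index on $\mc F_\alpha$ with $g$ rather than with $g_{\mc B}$ costs exactly a factor $u_\alpha^{-1}$, which turns one of the two copies of $T$ into its full $g$-norm while the other remains in its $g_{\mc B}$-norm. Applying Cauchy--Schwarz to the two gradient factors and using $u_\beta^{-1}u_\gamma^{-1}|\cv u_\beta|\,|\cv u_\gamma|=|\cv\log u_\beta|\,|\cv\log u_\gamma|\le\sum_{\delta=1}^A|\cv\log u_\delta|^2$, one obtains $|\mc Z_\alpha[T]|\le C_{\vec N}\big(\sum_{\delta}|\cv\log u_\delta|^2\big)|T|\,|T|_{g_{\mc B}}$; summing over $\alpha$ and absorbing the factor $A$ into the constant produces the asserted inequality, with $C$ depending only on $\vec N=(n,n_\alpha)$.

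I expect the main obstacle to be the bookkeeping in the second paragraph: one must verify that the multiply-warped connection differs from the direct-product connection \emph{only} through horizontal--vertical mixing, so that the ``good'' part of the second-derivative computation reproduces exactly $2\lp\Delta T,T\rp_{g_{\mc B}}+2|\cv T|^2_{g_{\mc B}}$ and every remaining term is captured by some $\mc Z_\alpha[T]$, and one must keep careful track of which copy of $T$ lands in the $g$-norm and which in the $g_{\mc B}$-norm. Once the structure $4\sum_\alpha\mc Z_\alpha[T]$, with $\mc Z_\alpha[T]$ quadratic in $u^{-1}\cv u$ and bilinear in $T$, is in place, the estimate itself is a one-line Cauchy--Schwarz.
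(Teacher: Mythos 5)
Your argument is correct and is essentially the proof the paper intends: the paper carries out the exact computation for a single fiber in Lemma~\ref{lem:norm-Laplacian} and obtains the Corollary by precisely the generalization-plus-Cauchy--Schwarz step you describe, including the key point that one copy of $T$ gets contracted with $g$ (yielding $|T|$) while the other stays in the $g_{\mc B}$-norm. One harmless inaccuracy: since the warped-product connection never mixes distinct fibers, both factors $u_\beta^{-1}\cv u_\beta$ in $\mc Z_\alpha[T]$ in fact lie on the same fiber $\beta=\alpha$ rather than ``possibly on a different fiber,'' but your bound by $\sum_{\delta}|\cv\log u_\delta|^2$ covers either case.
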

\bigskip

We now proceed to estimate $-\Delta|\Rm|_{g_{\mc B}}^2$ on a multiply-warped product.
Because the details are so similar to the previous case, we merely sketch the proof.
First, Claim~\ref{lem:Tim} shows that $\cv\Rm$ vanishes if exactly one index
is vertical. Thus we see by~\eqref{eq:grad-hat-g} that
\[
\cv_S |\Rm|_{g_{\mc B}}^2 = 2\check g^{IW}\check g^{JX}\check g^{KY}\check g^{LZ}
	(\cv_S R_{IJKL})R_{WXYZ},
\]
exactly as in~\eqref{eq:one-derivative-T}. Thus we find that
\[
\Delta |\Rm|_{g_{\mc B}}^2=2\lp\Delta\Rm,\Rm\rp_{g_{\mc b}}+2|\cv\Rm|^2_{g_{\mc B}}+8\,\mc Z[\Rm],
\]
where
\begin{equation}	\label{eq:Rm-quadratic}
\mc Z[\Rm]:=g^{\nu\sigma}(\cv_\nu\check g^{\tau w})\check g^{jx}\check g^{ky}\check g^{\ell z}
	(\cv_\sigma  R_{\tau jk\ell})R_{wxyz}.
\end{equation}

\begin{claim} \label{lem:fiber_derivative}
The $(5,0)$-tensor field $\cv\Rm$ satisfies
\[
  \nabla_\sigma R_{\tau jk\ell}
  =- \Gamma_{\sigma \tau}^i R_{ijk\ell} + \Gamma_{\sigma k}^\nu R_{\tau j \ell \nu}
  	- \Gamma_{\sigma \ell}^\nu R_{\tau j k \nu}.
\]
\end{claim}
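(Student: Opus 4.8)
The plan is to compute $\nabla_\sigma R_{\tau jk\ell}$ directly, working at the origin of a coordinate system that is normal for $\check g$ and $\hat g$, and using the Christoffel symbols recorded in~\eqref{eq:Christoffel} together with the curvature list derived above in this appendix.

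First I would note that the component $R_{\tau jk\ell}$, having exactly one vertical index, vanishes: this is precisely the line $R_{\sigma jk\ell}=R_{i\tau k\ell}=R_{ij\nu\ell}=R_{ijk\omega}=0$ of the curvature computation. Since the splitting of tangent vectors into horizontal and vertical is coordinate-independent and such a normal coordinate system can be centered at an arbitrary point of $\mc M$, this shows $R(V,X,Y,Z)=0$ for every vertical $V$ and horizontal $X,Y,Z$; in the fixed product coordinates the fields $\partial_\sigma$ are vertical and $\partial_j,\partial_k,\partial_\ell$ are horizontal everywhere, so $R_{\tau jk\ell}\equiv 0$ and hence $\partial_\sigma R_{\tau jk\ell}=0$.

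Then I would expand
\[
\nabla_\sigma R_{\tau jk\ell}= \partial_\sigma R_{\tau jk\ell} - \Gamma_{\sigma\tau}^P R_{Pjk\ell} - \Gamma_{\sigma j}^P R_{\tau Pk\ell} - \Gamma_{\sigma k}^P R_{\tau jP\ell} - \Gamma_{\sigma\ell}^P R_{\tau jkP}
\]
and read off which Christoffel symbols survive at the origin using~\eqref{eq:Christoffel}. The first term is zero by the previous step. In the second term only $\Gamma_{\sigma\tau}^i$ is nonzero there, since $\Gamma_{\sigma\tau}^\nu=\hat\Gamma_{\sigma\tau}^\nu=0$; this contributes $-\Gamma_{\sigma\tau}^i R_{ijk\ell}$. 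In the last three terms only the mixed symbols $\Gamma_{\sigma j}^\nu,\Gamma_{\sigma k}^\nu,\Gamma_{\sigma\ell}^\nu$ are nonzero. The $\Gamma_{\sigma j}^\nu$ term pairs against $R_{\tau\nu k\ell}$, a component with a vertical–vertical plane paired with a horizontal plane, which vanishes by~\eqref{eq:null-plane}, so it drops. The $\Gamma_{\sigma k}^\nu$ and $\Gamma_{\sigma\ell}^\nu$ terms give $-\Gamma_{\sigma k}^\nu R_{\tau j\nu\ell}-\Gamma_{\sigma\ell}^\nu R_{\tau jk\nu}$, and using the antisymmetry $R_{\tau j\nu\ell}=-R_{\tau j\ell\nu}$ on the first of these produces the asserted formula.

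The argument is essentially bookkeeping with~\eqref{eq:Christoffel}; the one place demanding a little care is the claim that $R_{\tau jk\ell}$ vanishes \emph{identically} rather than merely at one point, so that its coordinate partial derivative is zero there. I would settle this exactly as above, by invoking the intrinsic nature of the horizontal/vertical splitting and the fact that the normal-coordinate curvature computation is valid at every point of $\mc M$.
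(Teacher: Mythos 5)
Your proposal is correct and follows essentially the same route as the paper: drop $\partial_\sigma R_{\tau jk\ell}$ since $R_{\tau jk\ell}\equiv 0$, expand the covariant derivative, discard the term involving $R_{\tau\nu k\ell}$ via~\eqref{eq:null-plane}, and apply antisymmetry to reach the stated formula. The extra care you take about the identical (rather than pointwise) vanishing of $R_{\tau jk\ell}$ is a reasonable refinement of the paper's terser bookkeeping, but it is not a different argument.
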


\begin{proof}
Using equations~\eqref{eq:Christoffel}, \eqref{eq:null-plane}, and the fact that $R_{\tau jk\ell} = 0$, we compute that
  \begin{align*}
    \nabla_\sigma R_{\tau jk\ell}
    &= - \Gamma_{\sigma \tau}^I R_{Ijk\ell} - \Gamma_{\sigma j}^I R_{\tau I k\ell}
    - \Gamma_{\sigma k}^I R_{\tau j I \ell} - \Gamma_{\sigma\ell}^I R_{\tau j k I} \\
    &= - \Gamma_{\sigma \tau}^i R_{ijk\ell} - \Gamma_{\sigma j}^\nu R_{\tau \nu k\ell}
    - \Gamma_{\sigma k}^\nu R_{\tau j \nu\ell} - \Gamma_{\sigma \ell}^\nu R_{\tau j k \upsilon}  \\
    &= - \Gamma_{\sigma \tau}^i R_{ijk\ell} + \Gamma_{\sigma k}^\nu R_{\tau j \ell \nu}
    - \Gamma_{\sigma\ell}^\nu R_{\tau j k \nu}. 
  \end{align*}
\end{proof}

We denote by $\mc H$ the (integrable) horizontal distribution of  $\mc M$ and by
$\Rm_{\mc H\otimes\mc H}$ the restriction
\[
\Rm_{\mc H\otimes\mc H}:=\Rm \big|_{\mc H \otimes T \mc M \otimes T \mc M \otimes \mc H},
\]
\emph{i.e.,} only those components of $\Rm$ having the form $R_{iJK\ell}$. Then equation~\eqref{eq:grad-hat-g},
equation~\eqref{eq:Rm-quadratic}, and Claim~\ref{lem:fiber_derivative} immediately imply the following:

\begin{corollary}	\label{estimate-Laplacian|Rm|}
There exists a constant $C$ depending only on the dimension vector $\vec N=(n,n_\alpha)$ such that
  \begin{align*}
    - \Delta |\Rm|_{g_{\mc B}}^2
    &\leq
    - 2 \lp\Delta \Rm, \Rm\rp_{g_{\mc B}}
    - 2 |\nabla \Rm|^2_{g_{\mc B}}\\
    &\quad+ C \Big(\sum_{\alpha=1}^A |\cv \log u_\alpha|^2\Big) |\Rm|_{g_{\mc B}} |\Rm_{\mc H\otimes\mc H}|_g.
  \end{align*} 
\end{corollary}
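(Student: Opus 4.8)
The plan is to start from the identity derived just above the statement,
\[
\Delta|\Rm|^2_{g_{\mc B}}=2\lp\Delta\Rm,\Rm\rp_{g_{\mc B}}+2|\cv\Rm|^2_{g_{\mc B}}+8\,\mc Z[\Rm],
\]
which itself rests on Claim~\ref{lem:Tim} (so that only a single factor of $\cv\check g^{-1}$ survives, exactly as in~\eqref{eq:one-derivative-T}) together with~\eqref{eq:grad-hat-g}. Moving $-8\,\mc Z[\Rm]$ to the left-hand side and bounding it by $8\,|\mc Z[\Rm]|$, the corollary reduces entirely to the pointwise estimate
\[
|\mc Z[\Rm]|\le C\Big(\sum_{\alpha=1}^A|\cv\log u_\alpha|^2\Big)\,|\Rm|_{g_{\mc B}}\,|\Rm_{\mc H\otimes\mc H}|_g ,
\]
with $C=C(\vec N)$, where $\mc Z[\Rm]$ is as in~\eqref{eq:Rm-quadratic}.

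To establish this, I would estimate the three structurally distinct factors in~\eqref{eq:Rm-quadratic} one at a time. First, $g^{\nu\sigma}(\cv_\nu\check g^{\tau w})$: by~\eqref{eq:grad-hat-g}, raised with the help of~\eqref{eq:Christoffel}, the only nonvanishing mixed components of $\cv\check g^{-1}$ along the $\alpha$-th fiber are constant multiples of $u_\alpha^{-1}\cv u_\alpha=\cv\log u_\alpha$, so this factor is bounded in $g$-norm by $C\sum_\alpha|\cv\log u_\alpha|$. Second, $\cv_\sigma R_{\tau jk\ell}$: Claim~\ref{lem:fiber_derivative} writes it as $-\Gamma^i_{\sigma\tau}R_{ijk\ell}+\Gamma^\nu_{\sigma k}R_{\tau j\ell\nu}-\Gamma^\nu_{\sigma\ell}R_{\tau jk\nu}$, where each Christoffel symbol is again, by~\eqref{eq:Christoffel}, a multiple of some $u_\alpha^{-1}\cv u_\alpha=\cv\log u_\alpha$, and where each curvature component on the right --- $R_{ijk\ell}$, $R_{\tau j\ell\nu}$, $R_{\tau jk\nu}$ --- has, after the pair symmetry $R_{ABCD}=R_{CDAB}$, both its first and last slot horizontal, hence is a component of $\Rm_{\mc H\otimes\mc H}$ and is $\le|\Rm_{\mc H\otimes\mc H}|_g$ in absolute value; this yields $|\cv_\sigma R_{\tau jk\ell}|\le C(\sum_\alpha|\cv\log u_\alpha|)\,|\Rm_{\mc H\otimes\mc H}|_g$. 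Third, the remaining factor $R_{wxyz}$, contracted through $\check g^{jx}\check g^{ky}\check g^{\ell z}$ with the horizontal slots of $\cv_\sigma R_{\tau jk\ell}$, involves only the horizontal indices $w,x,y,z$; since $g$ and $g_{\mc B}$ restrict to the same metric on $\mc H$ (indeed $R_{wxyz}=\check R_{wxyz}$), this factor contributes exactly the purely horizontal part of the curvature and is therefore $\le|\Rm|_{g_{\mc B}}$. Multiplying the three bounds, and absorbing the combinatorial constants into $C=C(\vec N)$, gives the displayed estimate for $|\mc Z[\Rm]|$ and completes the argument.

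The only delicate point I expect is bookkeeping: keeping straight which metric --- $g$ versus the (degenerate) $g_{\mc B}$ --- each tensor slot is normed against, and checking via the pair symmetry of $\Rm$ that every curvature component surviving in Claim~\ref{lem:fiber_derivative} genuinely lands in $\Rm_{\mc H\otimes\mc H}$ rather than being a component with a single isolated vertical index, which would vanish by Claim~\ref{lem:Tim} anyway. There is no analytic content here; once~\eqref{eq:grad-hat-g} and Claim~\ref{lem:fiber_derivative} are in hand the estimate is purely algebraic, and the passage from a singly-warped to a multiply-warped product merely replaces single $u$-factors by sums over $\alpha\in\{1,\dots,A\}$, which is precisely the shape of the constant in the statement.
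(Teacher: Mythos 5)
Your proposal is correct and follows the paper's own route: the identity $\Delta|\Rm|^2_{g_{\mc B}}=2\lp\Delta\Rm,\Rm\rp_{g_{\mc B}}+2|\cv\Rm|^2_{g_{\mc B}}+8\,\mc Z[\Rm]$ combined with a factor-by-factor bound on $\mc Z[\Rm]$ using~\eqref{eq:grad-hat-g}, Claim~\ref{lem:fiber_derivative}, and the pair symmetry of $\Rm$ to place the surviving curvature components in $\Rm_{\mc H\otimes\mc H}$. The paper declares exactly this last estimate ``immediate,'' and your write-up supplies the same details (including the correct observation that each Christoffel factor is a multiple of some $\cv\log u_\alpha$ and that the purely horizontal factor contributes $|\Rm|_{g_{\mc B}}$).
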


\section{Curvature evolution equations and estimates}	\label{evolve-curvatures}

We continue the convention of Appendix~\ref{multiply-warped}, using lowercase Roman indices (\emph{e.g.,} $i,j,k,\ell$)
for horizontal vectors, lowercase Greek indices (\emph{e.g.,} $\sigma,\tau,\nu,\omega$) for vertical vectors, and allowing
capital Roman letters to range over both sets of indices. We assume that the metric $g$ is evolving by the
Ricci flow system~\eqref{eq:Ricci-flow-system}.

\subsection{The evolution of $\rho$}
Under Ricci flow, the $(4,0)$-Riemann curvature tensor evolves by (see, \emph{e.g.,} Corollary~6.14 of~\cite{CK04})
\begin{align*}
\heat R_{IJKL}&=g^{PQ}\big(R_{IJP}^M R_{MQKL}-2R^M_{PIK} R_{JQML}+2R_{PIML}R^M_{JQK}\big)\\
&\quad -(R_I^P R_{PJKL}+R_J^P R_{IPKL} + R_K^P R_{IJPL} + R_L^P R_{IJKP}).
\end{align*}

For simplicity, we again begin with an exact calculation for a singly-warped product and generalize this
below to an estimate for multiply-warped products. 
We start by computing the evolution of the curvature tensor acting on horizontal vectors, finding that
\begin{subequations}
\begin{align}
\heat R_{ijk\ell} &= g^{ab}(R_{ija}^c R_{cbk\ell}-2R_{aik}^cR_{jbc\ell}+2R_{aic\ell}R_{jbk}^c)	\label{eq:horizontal}\\
&\quad +g^{\sigma\tau}(R_{ij\sigma}^\gamma R_{\gamma\tau kl} -2 g_{\ell m}R_{\sigma ik}^\gamma R_{j\tau \gamma}^m
+2R_{\sigma i\gamma\ell}R_{j\tau k}^\gamma)	\label{eq:horizontal-evolution-first}\\
&\quad -(R_i^P R_{Pjk\ell}+R_j^P R_{iPk\ell} + R_k^P R_{ijP\ell} + R_\ell^P R_{ijkP}).	\label{eq:skip}
\end{align}
\end{subequations}
We note that~\eqref{eq:horizontal} consists of the terms one would see if the base alone were evolving by
Ricci flow, while~\eqref{eq:skip} consists of terms that are cancelled by derivatives of $g^{-1}$ in our
calculation of the evolution of $\rho=|\Rm|^2_{g_{\mb B}}$ below. So we need only to examine the three additional
terms in~\eqref{eq:horizontal-evolution-first}.

By~\eqref{eq:null-plane}, the first term in~\eqref{eq:horizontal-evolution-first} vanishes.
To evaluate the second and third terms in~\eqref{eq:horizontal-evolution-first}, we can
apply the formulas derived in Appendix~\ref{multiply-warped} directly, obtaining
\begin{align*}
g_{\ell m}R_{\sigma ik}^\gamma R_{j\tau \gamma}^m
&=\hat g_{\sigma\tau}\Big(\frac14 u^{-1}\cv_i\cv_k u \cv_j\cv_\ell u
-\frac18 u^{-2}\cv_i\cv_k u\cv_j u\cv_\ell u\\
&\qquad\qquad-\frac18 u^{-2}\cv_j\cv_\ell u \cv_i u\cv_k u
+\frac{1}{16}u^{-3}\cv_i u\cv_j u\cv_k u\cv_\ell u\Big)
\end{align*}
and
\begin{align*}
R_{\sigma i\gamma\ell}R_{j\tau k}^\gamma&=g_{\gamma\zeta}R_{\sigma i\ell}^\zeta R_{\tau jk}^\gamma\\
&=\hat g_{\sigma\tau}\Big(\frac14 u^{-1}\cv_i\cv_\ell u \cv_j \cv_k u
-\frac18 u^{-2}\cv_i\cv_\ell u \cv_j u\cv_k u\\
&\qquad\qquad-\frac18 u^{-2}\cv_j\cv_k u \cv_i u \cv_\ell u
+\frac{1}{16}u^{-3}\cv_i u \cv_j u \cv_k u \cv_\ell u\Big).
\end{align*}

Combining terms and tracing by $g^{\sigma\tau}$, we conclude that
\begin{equation}	\label{eq:horizontal-evolution-last}
\begin{split}
\heat R_{ijk\ell} &= g^{ab}(R_{ija}^c R_{cbk\ell}-2R_{aik}^cR_{jbc\ell}+2R_{aic\ell}R_{jbk}^c)\\
&\quad+\frac{\dim(\mc F)}{2}\Big\{
u^{-2}\big(\cv_i\cv_\ell u\cv_j\cv_k u - \cv_i\cv_k u \cv_j\cv_\ell u\big)\\
&\qquad\qquad\qquad+\frac12 u^{-3}
\big(\cv_i\cv_k u \cv_j u \cv_\ell u+\cv_j\cv_\ell u \cv_i u \cv_k u\\
&\qquad\qquad\qquad\qquad\qquad-\cv_j\cv_k u \cv_i u \cv_\ell u -\cv_i \cv_\ell u \cv_j u \cv_k u\big)\Big\}\\
&\quad -(R_i^P R_{Pjk\ell}+R_j^P R_{iPk\ell} + R_k^P R_{ijP\ell} + R_\ell^P R_{ijkP}).
\end{split}
\end{equation}
\medskip

We now estimate the evolution of $\rho(x,t)=\big|\Rm(x,t)\big|^2_{g_{\mc B}}$ for a multiply-warped product.
We note that in the case of a multiply-warped product, the only
possible nonzero terms in~\eqref{eq:horizontal-evolution-first} occur where the vertical coordinates $\sigma$ and $\tau$ are tangent to the same fiber.
Thus we obtain a sum of derivatives of $u_\alpha$ in~\eqref{eq:horizontal-evolution-last},
and using our estimate derived in Corollary~\ref{estimate-Laplacian|Rm|}, we recover the
standard estimate for the evolution of the curvature norm (see, \emph{e.g.,} Lemma 7.4 of~\cite{CK04})
modified by additional terms coming from the warped-product structure, namely
\begin{equation}	\label{eq:evolve-rho}
\begin{split}
\heat\rho &\leq-2|\cv\Rm|^2_{g_{\mc B}}+C_n\rho^{3/2}\\
&\quad+2\sum_{\alpha=1}^A n_\alpha\Big\{
u_\alpha^{-2}\Rm_{\mc B}(\cv^2v_\alpha,\cv^2v_\alpha)\\
&\qquad\qquad\qquad-2u_\alpha^{-3}\Rm_{\mc B}(\cv^2v_\alpha,\cv v_\alpha\otimes\cv v_\alpha)\Big\}\\
&\quad+C \Big(\sum_{\alpha=1}^A |\cv \log u_\alpha|^2\Big) |\Rm|_{g_{\mc B}} |\Rm_{\mc H\otimes\mc H}|_g,
\end{split}
\end{equation}
where $n_\alpha=\dim(\mc F_\alpha)$, $\Rm_{\mc B}$ denotes the curvature tensor of $g_{\mc B}$,
and $C$ is a constant depending only on the dimension vector $\vec N=(n,n_\alpha)$.

\subsection{The evolution of $\gamma_\alpha$}
We next consider the evolution of the curvature tensor acting on vertical vectors in an arbitrary fiber $\mc F_\alpha$.
It follows from~\eqref{0-deriv} and~\eqref{eq:rmmultiple-fiber} that for a multiply-warped product with space-form fibers,
it suffices to calculate the evolution of $\gamma_\alpha =|\cv u_\alpha|^2 = |\nabla v_\alpha|^2$.

As elsewhere in this Appendix, we omit the fiber index for convenience in the computations below.
We note that $\heat u$ is given by~\eqref{eq:evolve-fiber}.
It also follows from~\eqref{eq:evolve-fiber} that
\[
\partial_t \gamma=2\big\{\lp\cv\Delta v,\cv v\rp - u^{-1}\lp\cv\gamma,\cv v\rp
+u^{-2}\gamma^2\big\} +2\Rc(\cv v,\cv v),
\]
where $\Rc$ denotes the Ricci tensor of $g$ acting on horizontal vectors, as in~\eqref{eq:Rc-horizontal}.
Recalling that $\Delta\gamma=2\lp\Delta\cv v,\cv v\rp+2|\cv\cv v|^2$, we
commute covariant derivatives and conclude that
\begin{equation}	\label{eq:gamma-evolution-derived}
\heat\gamma=-2|\cv\cv v|^2 - 2u^{-1}\lp\cv\gamma,\cv v\rp+2u^{-2}\gamma^2.
\end{equation}
Observing that $\lp\cv\gamma, \cv v\rp = 2\cv^2 v(\cv v, \cv v)$,
we obtain the formula used in Lemma~\ref{lemma-ev-eq}.

\subsection{The evolution of $\chi_\alpha$}
We move on to controlling $\chi_\alpha = |\nabla \nabla v_\alpha|_{g_\mc B}$.  By Remark~\ref{rem-curv}, this is the last quantity needed
to control the full curvature tensor. For simplicity, we again fix a fiber and omit subscripts.

We denote the heat operator with the Lichnerowicz Laplacian of the metric $g$ by $\heat_{\mc L}$. Using the standard formula
(see, \emph{e.g.}, Lemma 2.33 of~\cite{CLN06})
\[
\heat_{\mc L}\cv_I\cv_J v = \cv_I \cv_J \heat v,
\]
we compute this heat operator acting on the covariant Hessian of $v$ as follows:
\begin{align*}
\Big(\heat_{\mc L}\big(\cv^2 v\big)\Big)_{IJ} &= u^{-2}(\cv_I\cv_J v)\,\gamma-2u^{-3}(\cv_I v\cv_J v)\,\gamma\\
&\quad+u^{-2}(\cv_I v\cv_J \gamma+\cv_I \gamma \cv_J v)-u^{-1}\cv_I\cv_J\gamma.
\end{align*}

Now using the identity $-\Delta = -\Delta_{\mc L} + 2\Rm* - 2\Rc*$, where $\Rm$ and $\Rc$
are those of the metric $g$, we convert this formula to one using the standard heat operator:
\begin{align*}
\Big(\heat\cv^2 v\Big)_{ij}&=u^{-2}(\cv_i\cv_j v)\gamma-2u^{-3}(\cv_i v\cv_j v)\gamma\\
&\quad+u^{-2}\big(\cv_i v\cv_j\gamma+\cv_i\gamma\cv_j v\big)-u^{-1}\cv_i\cv_j\gamma\\
&\quad+2R_{ik\ell j}\cv^k\cv^\ell v-R_i^k\cv_k\cv_j v-R_j^k\cv_i\cv_k v\\
&\quad+Nu^{-2}\gamma\big(-\frac12\cv_i\cv_j v+\frac14\cv_iv\cv_jv\big),
\end{align*}
where $N:=\sum_{\beta=1}^A\dim\mc F_\beta$ is the total dimension of the fibers.
We obtain the last line above by simplifying $2R_{i\sigma\tau j}\cv^\sigma\cv^\tau v$ using the identities
\[
R_{i\sigma\tau j}=u^{-1}g_{\sigma\tau}\big(-\frac12\cv_i\cv_j v + \frac14 u^{-1}\cv_i v\cv_j v\big)\quad\mbox{ and }\quad
\cv^\sigma\cv^\tau v = \frac12 u^{-1}\gamma g^{\sigma\tau}.
\]
Finally, we apply Corollary~\ref{estimate-norm-Laplacian} to conclude that
\begin{equation}	\label{eq:Hessian-evolution}
\begin{split}
\heat\chi&\leq-2|\cv^3 v|_{g_{\mc B}}^2+4\Rm_{\mc B}(\cv^2 v, \cv^2 v)+2u^{-2}\gamma\chi\\
&\quad-2u^{-3}\lp\cv v,\cv\gamma\rp\gamma+4u^{-2}\lp\cv^2 v,\cv v\otimes\cv\gamma\rp\\
&\quad-2u^{-1}\lp\cv^2 v,\cv^2 \gamma\rp_{g_{\mc B}}
+Nu^{-2}\gamma\big\{-\chi+\frac14u^{-1}\lp\cv\ v,\cv\gamma\rp\big\}\\
&\quad+C \Big(\sum_{\alpha=1}^A |\cv \log u_\alpha|^2\Big) |\cv^2v| |\cv^2 v|_{g_{\mc B}},
\end{split}
\end{equation}
where $\Rm_{\mc B}$ again denotes the curvature tensor of $g_{\mc B}$, and $C$ is a constant depending
only on the dimension vector $\vec N=(n,n_\alpha)$.

\end{document}